\newcommand{\N}{\mathbb{N}}
\newcommand{\Z}{\mathbb{Z}}
\newcommand{\R}{\mathbb{R}}
\newcommand{\C}{\mathbb{C}}
\newcommand{\T}{\mathbb{T}}
\newcommand{\dif}{\,\mathrm{d}}
\newtheorem{lemma}{Lemma}[section]
\newtheorem{prop}[lemma]{Proposition}
\newtheorem{thm}[lemma]{Theorem}
\newtheorem{defi}[lemma]{Definition}
\newtheorem{cor}[lemma]{Corollary}
\newtheorem{Rem}[lemma]{Remark}
\title{A proof of Vishik's nonuniqueness Theorem for the forced 2D Euler equation}
\author{\'Angel Castro, Daniel Faraco, Francisco Mengual, and Marcos Solera}
\date{\today}
\begin{document}

\begin{abstract}
We give a simpler proof of Vishik's nonuniqueness Theorem for the forced 2D Euler equation in the vorticity class $L^1\cap L^p$ with $2<p<\infty$. The main simplification is an alternative construction of a smooth and compactly supported unstable vortex, which is split into two steps: Firstly, we construct a piecewise constant unstable vortex, and secondly, we find a regularization through a fixed point argument. This simpler structure of the unstable vortex yields a simplification of the other parts of Vishik's proof.
\end{abstract}
\maketitle
\section{Introduction}

We consider the Euler equation in vorticity form
\begin{equation}\label{eq:Euler}
\partial_t\omega + v\cdot\nabla\omega=f,
\end{equation}
posed on $\R_+\times\R^2$,
for some fixed external force $f$ and initial datum $\omega^\circ$
\begin{equation}\label{eq:Euler:2}
\omega|_{t=0}
=\omega^\circ.
\end{equation}
The velocity $v$ is recovered from the vorticity $\omega$ through the Biot-Savart law
\begin{equation}\label{eq:BiotSavart}
v=\nabla^\perp\Delta^{-1}\omega.
\end{equation}
The aim of this work is to simplify the proof of Vishik's nonuniqueness Theorem \cite{Vishikpp1,Vishikpp2}:

\begin{thm}\label{thm:Vishik}
For any $2<p<\infty$ there exists a force
\begin{equation}\label{forceintegrability}
f\in L_t^1(L^1\cap L^p)
\quad\text{with}\quad
g\in L_t^1L^2,
\end{equation}
where $g=\nabla^\perp\Delta^{-1} f$, 
with the property that there are two different solutions
\begin{equation}\label{vorticityintegrability}
\omega_0,\omega_1\in L_t^\infty (L^1\cap L^p)
\quad\text{with}\quad
v_0,v_1\in L_t^\infty L^2,
\end{equation}
where $v_j=\nabla^\perp\Delta^{-1}\omega_j$, to the Euler equation starting from  $\omega^\circ=0$.
\end{thm}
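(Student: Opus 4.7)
My plan is to follow Vishik's overall strategy and look for the two solutions in self-similar form. Introduce logarithmic time $\tau=\log t$ and the self-similar variable $\xi=x/t^{1/\alpha}$ with a parameter $0<\alpha<2/p$, and seek solutions
\[
\omega(t,x)=\tfrac{1}{t}\,\Omega(\tau,\xi),\qquad v(t,x)=t^{-1+1/\alpha}V(\tau,\xi),
\]
where $V=\nabla_\xi^\perp\Delta_\xi^{-1}\Omega$, driven by a forcing of the corresponding form $f(t,x)=t^{-2}F(\tau,\xi)$. A direct computation rewrites \eqref{eq:Euler}, after multiplication by $t^2$, as
\[
\partial_\tau\Omega-\Omega-\alpha^{-1}\xi\cdot\nabla_\xi\Omega+V\cdot\nabla_\xi\Omega=F.
\]
A radial time-independent profile $\bar\Omega(\xi)=g(|\xi|)$ with compact support produces through this equation a forcing $\bar F$ that is likewise compactly supported and time-independent, yielding a first solution $\omega_0(t,x)=t^{-1}\bar\Omega(x/t^{1/\alpha})$ that emanates from $\omega^\circ\equiv 0$ as $t\to 0^+$. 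The scaling choice $\alpha<2/p$ secures the integrability claims \eqref{forceintegrability}--\eqref{vorticityintegrability} for $\omega_0,v_0,f,g$.

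\textbf{Unstable radial vortex.} The crucial ingredient is a smooth compactly supported radial $\bar\Omega$ whose linearized operator
\[
L\tilde\Omega=-\bar V\cdot\nabla\tilde\Omega-\tilde V\cdot\nabla\bar\Omega+\alpha^{-1}\xi\cdot\nabla\tilde\Omega+\tilde\Omega
\]
possesses an eigenvalue $\lambda$ with $\mathrm{Re}\,\lambda>0$. Since $\bar\Omega$ is radial, $L$ commutes with rotations and decouples along Fourier modes $\tilde\Omega=h(r)e^{\mathrm{i}k\theta}$ into operators $L_k$ whose eigenvalue problems reduce to ODEs of Rayleigh type. Following the two-step construction announced in the abstract, I would first pick a piecewise-constant radial profile $g_*$ for which Rayleigh's equation admits piecewise-explicit solutions and the matching conditions across the jumps close into a finite-dimensional dispersion relation; direct inspection exhibits a root $\lambda$ in the right half-plane for an appropriate $k$. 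I would then regularize $g_*$ by mollification and set up a fixed-point argument, i.e.\ a contraction on a small neighborhood of $g_*$ in a suitable space of profiles, which transports the unstable eigenvalue $\lambda$ to a genuine unstable eigenvalue of the corresponding $L_k$ attached to a smooth compactly supported $\bar\Omega$.

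\textbf{Bifurcating second solution.} With $\bar\Omega$ in hand, write $\Omega=\bar\Omega+\tilde\Omega$; the perturbation satisfies the semilinear evolution
\[
\partial_\tau\tilde\Omega=L\tilde\Omega-\tilde V\cdot\nabla\tilde\Omega
\]
on an $L^1\cap L^p$-type space. The unstable eigenvalue $\lambda$ of $L$, together with Calder\'on--Zygmund bounds making $\tilde V\cdot\nabla\tilde\Omega$ a locally Lipschitz lower-order term, lets me solve a backward-in-$\tau$ Duhamel equation on a weighted space of functions decaying like $e^{\mu\tau}$ as $\tau\to-\infty$ for some $0<\mu<\mathrm{Re}\,\lambda$, by a contraction mapping. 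This produces a nontrivial trajectory on the unstable manifold of the zero solution, and pulling it back through the self-similar change of variables yields a second admissible solution $\omega_1(t,x)=t^{-1}(\bar\Omega+\tilde\Omega)(\log t,x/t^{1/\alpha})$ with the same forcing $f$ and the same zero initial datum as $\omega_0$, completing the nonuniqueness.

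\textbf{Main obstacle.} The crux is the construction of the smooth unstable vortex: passing from the piecewise-constant profile, whose dispersion relation is tractable, to a smooth one while preserving an eigenvalue with positive real part. Mollification is a compact perturbation of $L_k$ in the relevant topology, but controlling the location of the unstable root under this perturbation requires a quantitative fixed-point scheme working with the resolvent of Rayleigh's equation, and this is precisely where the present paper's main simplification over Vishik's approach lives. Once the smooth unstable vortex is in place, the bifurcation argument and the verification of \eqref{forceintegrability}--\eqref{vorticityintegrability} follow from standard semigroup, Calder\'on--Zygmund, and scaling considerations tied to $\alpha<2/p$.
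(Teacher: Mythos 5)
Your outline shares the paper's overall architecture (self-similar reduction, piecewise-constant vortex, regularization by fixed point, Duhamel bifurcation), but there is a genuine gap at the very heart of it: you use a \emph{one-parameter} self-similar scaling, and as a consequence your linearized operator
\[
L\tilde\Omega=-\bar V\cdot\nabla\tilde\Omega-\tilde V\cdot\nabla\bar\Omega+\alpha^{-1}\xi\cdot\nabla\tilde\Omega+\tilde\Omega
\]
carries a scaling term $\alpha^{-1}\xi\cdot\nabla+1$ whose coefficient $\alpha^{-1}>p/2>1$ is of order one, not small. With that term present, the Rayleigh eigenvalue problem for a piecewise-constant profile does \emph{not} reduce to a finite algebraic dispersion relation. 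In the purely Eulerian case, the eigenvalue equation forces $w_n$ to vanish wherever $\partial_r\bar w=0$, so $w_n$ is a measure concentrated on the two jump radii and the nonlocal Biot--Savart integral collapses to a $2\times 2$ linear system. The derivative term $\alpha^{-1}r\partial_r w_n$ destroys this mechanism: away from the jumps the equation becomes a genuine first-order ODE for $w_n$ whose solutions are power laws, and these then feed back into the Biot--Savart integral, so the ``matching conditions across the jumps'' never close into a $2\times 2$ problem. Your claim that ``direct inspection exhibits a root $\lambda$ in the right half-plane'' is therefore unsubstantiated, and the subsequent fixed-point regularization of $g_*$ inherits the same difficulty, since it too would need to be run against the operator that includes $\alpha^{-1}\xi\cdot\nabla$.

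This is precisely the obstacle that Vishik's second parameter is designed to remove, and it is the decisive idea you are missing. The paper decouples the integrability exponent from the self-similar rate, using $\tau=\tfrac{1}{ab}\log t$ and $X=x/(abt)^{1/a}$, so that the linearized operator splits as $L_b=L+bS$ with $S\Omega=(a+X\cdot\nabla)\Omega$ and a \emph{small} prefactor $b$. Step 1 (piecewise-constant ansatz plus regularization) is carried out entirely for the Eulerian operator $L=L_0$, where the finite-dimensional reduction is valid. Step 2 then proves, via Fredholm theory for $T_b+K$ and a Riesz-projection continuity argument in $(b,\lambda)$, that the unstable eigenvalue of $L$ persists as an eigenvalue of $L_b$ for some $b>0$ small, with $\Re\lambda>3b$; this also delivers the growth bound on $e^{sL_b}$ needed to close the Duhamel/energy estimates of Step 3. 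Without introducing the parameter $b$ and this spectral-perturbation step, the plan as written cannot produce an unstable eigenvalue for the operator you need, and the rest of the argument does not get off the ground. To repair the proposal you should replace the one-parameter ansatz by a two-parameter one, construct the unstable eigenvalue for $L_0$ only, and then transfer it to $L_b$ by a compactness/continuity argument on the resolvent.
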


Theorem \ref{thm:Vishik} shows sharpness of Yudovich's uniqueness Theorem for the forced Euler equation. More precisely, Yudovich's theory implies that if $p=\infty$ in \eqref{forceintegrability}\eqref{vorticityintegrability}, then necessarily $\omega_0=\omega_1$.

In fact, in the construction of Theorem~\ref{thm:Vishik}, both the force $(f,g)$ and the main solution $(\omega_0,v_0)$ belong to $C_c^\infty$ for positive times. Moreover, for any given $k\in\N$, it is not difficult to see that the second solution $(\omega_1,v_1)$ can be upgraded to be in $H^{k}\times H^{k+1}$ for positive times. These estimates inevitably deteriorate as $t\to 0$. As this information is not relevant in Yudovich's proof of uniqueness, we have chosen not to include it in  the statement of Theorem \ref{thm:Vishik}.

In a nutshell, Vishik's proof is split into three steps: (1) Construction of an unstable vortex, which is then shown to be  (2) Self-similarly unstable, and (3) Nonlinearly unstable as well.

Step (1) is carried out in \cite{Vishikpp1,Vishikpp2} by an intricate modification of the power-law vortex $\omega^\circ(x)=\beta|x|^{-\alpha}$. As noticed in \cite{ABCDGMKpp}, by decoupling the parameters governing the self-similar scaling from the
decay rate, Vishik's argument leads to nonuniqueness for the zero initial data $\omega^\circ=0$, which showcases the primary role played by the forcing $f$.

The observation in \cite{ABCDGMKpp} that decay does not play a significant role in Vishik's Theorem
suggested us the possibility of obtaining  the proof by considering compactly supported vortices. Even more, inspired by simple examples for shear flows (see e.g.~\cite[Chapter 4]{DrazinReid04}) and vortex patches, we initially aimed to construct a piecewise constant unstable vortex.
Remarkably, Step (1) for piecewise constant vortices reduces to an elementary computation. In order to upgrade the piecewise constant vortex to a smooth vortex, we need to apply a fixed point argument in suitable rescaled variables. This step is inspired in \cite{CastroLear23} but needs of new ideas to be carried out here.

With  the new unstable vortex available, the proofs of Steps (2) and (3)  from Vishik or \cite{ABCDGMKpp} would work verbatim. However, since the new vortex is structurally simple, both Steps (2) and (3) admit also several simplifications leading to a straightforward self-contained proof of the entire theorem, which is presented in this work. In any case, the last two steps follow the presentation from book \cite{ABCDGMKpp} by Albritton, Bru\'e, Colombo, De Lellis, Giri, Janisch, and Kwon, simplifying the arguments whenever possible. 

As highlighted in Vishik's works \cite{Vishikpp1,Vishikpp2} and the book \cite{ABCDGMKpp}, the rigorous construction of the unstable vortex is a key intermediate step, which is of independent physical and mathematical interest. 
Indeed,  Albritton, Bru\'e and Colombo  \cite{ABC22} proved the nonuniqueness of Leray solutions for the forced 3D Navier-Stokes equation by carefully adapting Vishik's unstable vortex into the cross section of an axisymmetric vortex ring. We remark that, although our vortex differs from Vishik's one, it fits into the requirements of the paper \cite{ABC22} leading to  an alternative example of nonuniqueness for the forced 3D Navier-Stokes equation. 

We finally remark that our unstable vortices appear robust enough to be found in more general settings. Indeed, our investigation started because it was not obvious how Vishik's strategy  could be modified to prove nonuniqueness in the more singular (and nonlocal) context of generalized SQG. In this case, the Biot-Savart law \eqref{eq:BiotSavart} is replaced by $v=-\nabla^{\perp}(-\Delta)^{\nicefrac{\alpha}{2}-1} \omega$.
The construction and regularization of piecewise constant unstable vortices presented in this paper are however rather flexible and have the potential to work even for $\alpha$ all the way up to $1$.  
Tackling the SQG case ($\alpha=1$) poses even more challenges, notably because $v$ is not obtained through a smoothing operator. Both cases, generalized SQG and SQG will be the matter of a forthcoming paper.

\subsection{Brief background} 

The global existence and uniqueness of classical solutions to the 2D Euler equation has been known for almost a century, starting with the works of Wolibner \cite{Wolibner33} and H\"older \cite{Holder33} in $C^{k,\gamma}$ spaces for $k=0,1,\ldots$ and $\gamma>0$. 
Local well-posedness was previously established by Lichtenstein \cite{Lichtenstein25} and Gunther \cite{Gunther27}.
The global well-posedness in 2D sharply contrasts with the 3D case due to vortex stretching, as demonstrated by Elgindi \cite{Elgindi21}, who showed finite-time singularities
with $\omega^\circ\in C^{\gamma}$ for some $0<\gamma\ll 1$ in the unforced case. 
Recently, C\'{o}rdoba, Mart\'{i}nez-Zoroa and Zheng \cite{CMZpp} simplified the proof of this blow-up with a different strategy.
Additionally, C\'{o}rdoba and Mart\'{i}nez-Zoroa \cite{CMpp} upgrade the regularity in the forced case with
$f\in L_t^\infty C^{\nicefrac{1}{2}-\varepsilon}$.
We refer to the work \cite{KiselevSverak14} by Kiselev and \v{S}ver\'{a}k for the related issue on small scale creation in two dimensions.

In the celebrated paper \cite{Yudovich63}, Yudovich extended the 2D global well-posedness into the vorticity class $L^1\cap L^\infty$. 
The existence of global solutions in $L^1\cap L^p$ for $1<p\leq\infty$ was proved by DiPerna and Majda \cite{DiPernaMajda87} at the late eighties.
Since then, a long-standing open question in the field is whether uniqueness fails for $p<\infty$. 

In the pioneering works \cite{JiaSverak14,JiaSverak15}, Jia and \v{S}ver\'{a}k introduced, within the context of the 3D Navier-Stokes equation, the idea that nonuniqueness could potentially arise from a self-similar instability. 
Given that uniqueness is known to be satisfied for small data (in suitable spaces, see e.g.~\cite{KochTataru01}), they conjectured that nonuniqueness could occur due to bifurcations within solutions of the form $\beta\bar{\omega}$, for some self-similar steady state. Specifically, they proved  in the celebrated paper \cite{JiaSverak14} the existence of self-similar solutions for large data. Later,  in \cite{JiaSverak15}, the authors speculated that some eigenvalues of the linearization in self-similar coordinates might cross the imaginary axis as $\beta$ increases, leading to the emergence of multiple solutions. While there are numerical evidences, due to Guillod and \v{S}ver\'{a}k \cite{GuillodSverak23}, supporting the existence of such solutions, the rigorous proof appears to be elusive, primarily because the Jia-\v{S}ver\'{a}k  self-similar  solutions are not explicit.

In the recent groundbreaking works \cite{Vishikpp1,Vishikpp2}, Vishik was able to construct a self-similarly unstable vortex for the 2D Euler equation, but at the cost of introducing a force. 
One of the innovative ideas in these works is Vishik's key observation that, by taking $\beta$ sufficiently large,  in order to find a self-similar unstable vortex it suffices to find an unstable vortex   
$\bar\omega$ in the original Eulerian coordinates. This  simplifies the analysis considerably. In the book \cite{ABCDGMKpp}, upon which Sections \ref{sec:selfsimilar} and \ref{sec:nonlinear} of the present work are based, the authors reviewed Vishik's nonuniqueness Theorem \ref{thm:Vishik}, providing more details, offering some simplifications and clarifying certain subtle points.

To the best of our knowledge, the question of nonuniqueness without forcing, both for the 2D Euler equation below the Yudovich class and for the 3D Navier-Stokes equation in the Leray class, remains unresolved to date.

In \cite{BressanMurray20,BressanShen21} Bressan, Murray, and Shen 
presented numerical evidences of the nonuniqueness for the unforced 2D Euler equation. 
Specifically, they introduced two different ways of regularizing a cleverly designed initial vorticity, leading to either one or two algebraic spirals. 
Recent research on these spirals by Garc\'ia and G\'omez-Serrano \cite{GGS22}, and by Shao, Wei and Zhang \cite{SWZ23}, based on the earlier work of Elling \cite{Elling16a}, could be relevant in the potential proof of nonuniqueness without force. 

The first examples of nonuniqueness for weak solutions can be attributed to Scheffer \cite{Scheffer93} and Shnirelman \cite{Shnirelman97}. Specifically, they constructed weak solutions $v\in L_{t,x}^2$ with compact support in time, to the Euler equation in velocity form.
In their seminal work \cite{DeLellisSzekelyhidi09}, De Lellis and Sz\'ekelyhidi introduced the convex integration method in hydrodynamics, constructing non-trivial Euler velocities within the energy space $L_t^\infty L^2$ for any space dimension $d\geq 2$. Over the last years, there has been a significant increase in research intensity focused on the method,
showcasing its remarkable robustness and flexibility.
As a pivotal landmark, this method allowed constructing Euler velocities in $C_{t,x}^{\nicefrac{1}{3}-\varepsilon}$ 
exhibiting nonuniqueness \cite{Isett18} by Isett, and dissipating the kinetic energy \cite{BDSV19} by Buckmaster, De Lellis, Sz\'ekelyhidi, and Vicol, thus solving the dissipative part of the 3D Onsager conjecture. See also the work \cite{NovackVicol23} by Novack and Vicol on an intermittent Onsager theorem, based on their joint work \cite{BMNV23} with Buckmaster and Masmoudi, as well as the recent work \cite{GNK23} on the $L^3$-based strong Onsager conjecture by Giri, Kwon and Novack. 
The conservative part of the Onsager conjecture in any dimension was partially proved by Eyink \cite{Eyink94} and later proven in full by Constantin, E, and Titi \cite{CET94}. See also the work \cite{CCFS08} by Cheskidov, Constantin, Friedlander, and Shvydkoy on critical regularity.
The dissipative part of the 2D Onsager conjecture was solved recently in \cite{GR23} by Giri and Radu by means of convex integration. 
We refer to the recent
work \cite{DeRosaParkpp} by De Rosa and Park for the related issue on anomalous dissipation in two dimensions.
In the case with force, Bulut, Huynh and Palasek proved in \cite{BKP23} that the regularity in 3D can be upgraded to $C^{\nicefrac{1}{2}-\varepsilon}$.
For other applications of convex integration to forced equations, we refer to the recent work \cite{DaiFriedlander23} by Dai and Friedlander,  and the references therein.

As already mentioned in \cite{ABCDGMKpp}, Theorem \ref{thm:Vishik} implies that, for any $\gamma<1$,
there exist two different Euler velocities $v_1,v_2\in L_t^\infty(L^2\cap C^\gamma)$ with force $g\in L_t^1(L^2\cap C^\gamma)$ starting from $v^\circ=0$.
To the best of our knowledge, the problem of nonuniqueness for the unforced Euler equation in the velocity class $L^2\cap C^\gamma$ remains open in the regime $\nicefrac{1}{3}\leq\gamma<1$.

Unfortunately, it seems not possible with the current convex integration techniques to construct
solutions, neither with $v\in C^{\gamma}$ for $\nicefrac{1}{3}\leq\gamma <1$, nor with $\omega\in L^p$ for $p\geq 1$ in two dimensions.
Despite this inconvenience, some partial results 
have been obtained in the last years. In \cite{Mpp}
the third author proved the existence, for any $2<p<\infty$, of initial data $v^\circ\in L^2$ with $\omega^\circ\in L^1\cap L^p$ for which there are infinitely many admissible solutions $v\in C_t L^2$. This result shows sharpness of Yudovich's proof of uniqueness, but with the drawback that the vorticity information is lost for positive times. In \cite{BrueColombo23} Bru\'e and Colombo constructed a Cauchy sequence $\omega_k$ in the Lorentz space $L^{1,\infty}$, whose velocities $v_k$ converge to an anomalous weak solution $v$. 
In \cite{BuckModenapp} Buck and Modena adapted the previous construction for the Hardy space $H^p$ for $\nicefrac{2}{3}<p<1$. This space is also weaker than $L^1$ but, in contrast to $L^{1,\infty}$, it already embeds into the space of distributions.

Finally, we emphasize that the convex integration method allowed for proving nonuniqueness of Navier-Stokes solutions \cite{BuckmasterVicol19} by Buckmaster and Vicol, and sharpness of the Ladyzhenskaya-Prodi-Serrin criteria \cite{CheskidovLuo22} by Cheskidov and Luo. Furthermore, the method has been also applied to the construction of non-unique dissipative Euler flows with concrete rough initial data such as vortex sheets \cite{Szekelyhidi11,MengualSzekelyhidi23} by Sz\'ekelyhidi and the third author, and recently with $C^{\nicefrac{1}{3}-\varepsilon}$ regularity \cite{EPPpp} by Enciso, Pe\~nafiel-Tom\'as and Peralta-Salas, vortex filaments \cite{GHMpp} by Gancedo, Hidalgo-Torn\'e and the third author, and also to
the inhomogeneous case \cite{GKS21} by
Gebhard, Kolumb\'{a}n and Sz\'{e}kelyhidi. These later developments are inspired by related constructions for IPM \cite{CFG11,Szekelyhidi12,CCF21,ForsterSzekelyhidi18}.

\subsection{Organization of the paper} In Section \ref{sec:sketch} we outline the proof of Theorem \ref{thm:Vishik}. In Section \ref{sec:vortex} we construct a piecewise constant unstable vortex. In Section \ref{sec:regularization} we find a regularization that is also unstable. Finally, in Sections \ref{sec:selfsimilar} and \ref{sec:nonlinear} we prove that the vortex is also self-similarly and nonlinearly unstable, respectively. 

\section{Sketch of the proof}\label{sec:sketch}

In this section we summarize the main ideas for the \hyperref[sec:proofVishik]{Proof of Theorem \ref{thm:Vishik}}. The missing  technical details will be dealt rigorously in the next sections.

Before going further let us introduce concisely Vishik's strategy to prove nonuniqueness \cite{Vishikpp1,Vishikpp2}.
Firstly, following the ideas of Jia and \v{S}ver\'{a}k \cite{JiaSverak14,JiaSverak15}, the construction of a self-similar unstable vortex $\omega_0$ gives hope to find other solutions $\omega_1$ deviating from $\omega_0$ as
$$
\|(\omega_1-\omega_0)(t)\|_{L^{\nicefrac{2}{a}}}\sim t^{\frac{\Re\lambda}{ab}},
$$
as $t\to 0$,
where $0<a,b\leq 1$ are two parameters governing the self-similar scaling, and $\lambda\in\C$ is the unstable eigenvalue ($\Re\lambda>0$). However, it is not clear how to directly construct a self-similar unstable vortex. Remarkably, Vishik realized and proved
that  some spectral properties of the linearization in self-similar coordinates can be recovered from the linearization in the original Eulerian coordinates. 

Thus, the \hyperref[sec:proofVishik]{Proof of Theorem \ref{thm:Vishik}} divides into 
  three steps:  (1) Eulerian, (2) Self-similar, and (3) Nonlinear instability. As discussed in the intro, in our approach,
Step (1) is split into two intermediate steps: (1.1) Construction of a piecewise constant unstable vortex, and (1.2) its Regularization.

\subsection*{Step 1.~Eulerian instability}\label{Step1}
We start by recalling how linear stability for the Euler equation leads to the Rayleigh stability equation, which we found convenient to express in vorticity form.

Given a background (steady) vorticity $\bar{\omega}$, a perturbation  $\bar{\omega}+\epsilon\omega$ is a second solution
to the Euler equation if the deviation $\omega$  satisfies the equation
$$
(\partial_t-L)\omega
+\epsilon (v\cdot\nabla\omega)=0.
$$
Here,  $L=L_{\bar{\omega}}$ is the linearization of the Euler equation \eqref{eq:Euler} around $\bar{\omega}$. The linearization can be written as
\begin{equation}\label{L}
L=T+K,
\end{equation}
where $T$ is a transport operator,  $K$ is  a compact operator, and they are defined by
\begin{align*}
T\omega&=-\bar{v}\cdot\nabla\omega,\\
K\omega&=-v\cdot\nabla\bar{\omega}.
\end{align*}
Here, $v$ and $\bar{v}$ are recovered from $\omega$ and $\bar{\omega}$ respectively through the Biot-Savart law \eqref{eq:BiotSavart}.

As usual in Stability theory, as $\epsilon\to 0$ one is lead to study the linear equation
\begin{equation}\label{eq:Euler:L}
(\partial_t-L)\omega^{\text{lin}}=0,
\end{equation}
and seek for solutions that grow exponentially in time. That is, for $\Re\lambda$ strictly positive, 
\begin{equation}\label{omegalin}
\omega^{\text{lin}}(t,x)=e^{\lambda t}w(x).
\end{equation}
For solutions of this form, the equation \eqref{eq:Euler:L} is equivalent to the eigenvalue problem
\begin{equation}\label{eq:L}
Lw=\lambda w.
\end{equation}
In other words, instability, at this linear level, translates into the existence of eigenvalues $\lambda$ in the right half plane
$$
\C_+=\{\lambda\in\C\,:\,\Re\lambda>0\},
$$
and eigenfunctions $w$ in some suitable Hilbert space, of the linear operator $L=L_{\bar{\omega}}$ 

\begin{Rem}\label{rem:realvalued}
It is useful to consider the eigenvalue problem \eqref{eq:L} for complex-valued vorticities. Since the operator $L$ is real, the real-valued solutions to \eqref{eq:Euler:L} will be recovered by replacing \eqref{omegalin} with
\begin{equation}\label{eq:omegalin}
\omega^{\text{lin}}(t,x)=\Re(e^{\lambda t}w(x)).
\end{equation}
Notice that the imaginary part is also a real-valued solution to \eqref{eq:Euler:L}. In particular, if $\Im\lambda=0$, both the real and imaginary parts of $w$ are eigenfunctions.
Finally, notice that if $w$ is an eigenfunction with eigenvalue $\lambda$, then its complex conjugate $w^*$ is also an eigenfunction with eigenvalue $\lambda^*$.
This remark will become relevant in the \hyperref[sec:proofVishik]{Proof of Theorem \ref{thm:Vishik}}.
\end{Rem}

The caveat of  this approach is that the spectral analysis of $L$ is  quite complicated, even for simple steady states $\bar{\omega}$. Thus,  we restrict ourselves to the case of radially symmetric $\bar{\omega}$'s called \textit{vortices}, which in polar coordinates 
$x=re^{i\theta}$ reads as
$$
\bar{\omega}(x)=\bar{w}(r).
$$

Next, we choose on which  function space the eigenvalue problem will be solved.  Given $0\neq n\in\Z$, we seek for eigenfunctions in the subspace of $L^2$ formed by purely $n$-fold symmetric vorticities 
\begin{equation}\label{eq:Un}
U_n=\{w\in L^2\,:\,w(x)=w_n(r)e^{in\theta}\}.
\end{equation}
Since $U_{-n}=U_n^*$, we will consider without loss of generality the case $n\in\N$.

\begin{defi} 
We say that the vortex $\bar{\omega}$ is \textit{unstable} if, for some $n\in\N$, there exists $0\neq w\in U_n$ satisfying \eqref{eq:L} with $\lambda\in\C_+$. 
\end{defi}

As we will see in Section \ref{sec:vortex},
the space $U_n$ is invariant under the operator $L$. Namely, for $w \in U_n$,
\begin{align*}
Tw
&=-ine^{in\theta}\frac{\bar{v}_\theta(r)}{r}w_n(r),\\
Kw
&=-ie^{in\theta}\partial_r\bar{w}(r)\int_0^\infty K_n\left(\frac{r}{s}\right)w_n(s)\dif s,
\end{align*}
where $\bar{v}_\theta$ is the angular component of the velocity $\bar{v}$, and $K_n$ is the Biot-Savart kernel acting on $U_n$ (see \eqref{eq:BSvortex} and \eqref{eq:Kn:0} respectively).

Finally, the \textit{Rayleigh stability equation} is nothing but the eigenvalue problem \eqref{eq:L} for $w\in U_n$  
\begin{equation}\label{eq:RSE:0}
\frac{\bar{v}_\theta}{r}w_n +\frac{\partial_r\bar{w}}{n}\int_0^\infty K_n\left(\frac{r}{s}\right)w_n(s)\dif s
=zw_n,
\end{equation}
where we have replaced $\lambda=-inz$.
Observe that $\Re\lambda>0$ translates to $\Im z>0$.
Notice that \eqref{eq:RSE:0} implies that $w_n$ must vanish wherever $\partial_r\bar{w}=0$. Thus, we can write
$$
w_n=h\partial_r\bar{w},
$$
in terms of some profile $h$.
For that $h$ the equation \eqref{eq:RSE:0} reads as
\begin{equation}\label{eq:RSE}
\frac{\bar{v}_\theta(r)}{r}h(r) +\frac{1}{n}\int_0^\infty K_n\left(\frac{r}{s}\right)(h\partial_r\bar{w})(s)\dif s
=zh(r),
\quad\quad
r\in\mathrm{supp}(\partial_r\bar{w}).
\end{equation}
Thus, Eulerian instability reduces to  finding a vortex that admits nontrivial  solutions $(h,z)$ of \eqref{eq:RSE} with $\Im z>0$. We will show that this is indeed the
case and therefore land in our first main result:

\begin{thm}\label{thm:L}
There exists an unstable vortex
$\bar{\Omega}\in C_c^\infty(\R^2)$ with zero mean.
\end{thm}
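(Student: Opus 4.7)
The plan is to realize Theorem~\ref{thm:L} in two stages, following the splitting of Step~(1) described in the introduction: first construct a piecewise constant unstable vortex by an elementary finite-dimensional argument, then upgrade it to a smooth one via a fixed point in rescaled radial variables.

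For the piecewise constant stage I would take
\[
\bar w_{\mathrm{pc}}(r) = \sum_{i=1}^N c_i\, \mathbbm{1}_{[r_{i-1},r_i)}(r),
\qquad r_0=0,\ c_N=0,
\]
so that $\partial_r\bar w_{\mathrm{pc}}$ is the sum of Dirac masses $(c_{i+1}-c_i)\delta_{r_i}$ at the jump radii. Substituted formally into the Rayleigh equation~\eqref{eq:RSE} and tested at each $r_k$, the integral term collapses to a finite sum and yields the $(N-1)$-dimensional matrix eigenvalue problem
\[
\frac{\bar v_\theta(r_k)}{r_k}h(r_k)+\frac{1}{n}\sum_{i=1}^{N-1}K_n\!\left(\tfrac{r_k}{r_i}\right)(c_{i+1}-c_i)\,h(r_i)=z\,h(r_k),
\]
so Eulerian instability reduces to this matrix possessing a non-real eigenvalue. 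I would attempt it first with just two jumps ($N=2$), where the system is $2\times 2$ and instability is equivalent to the single inequality that the discriminant of the characteristic polynomial be negative. The zero-mean constraint $c_1 r_1^2+c_2(r_2^2-r_1^2)=0$ forces $c_1$ and $c_2$ to have opposite signs, exactly as expected for a compactly supported radial profile; after eliminating one constant, the instability inequality depends only on $n$ and the ratio $r_2/r_1$, and can be verified by an explicit choice of these two parameters.

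For the regularization stage I would mollify $\bar w_{\mathrm{pc}}$ at scale $\delta>0$ to obtain a smooth compactly supported zero-mean candidate vortex $\bar\Omega$, and then propagate the isolated unstable eigenvalue $z_\ast$ of the matrix problem to an unstable eigenvalue $z_\delta$ of the full integral equation~\eqref{eq:RSE}. The natural way to do this is to pass to rescaled radial coordinates $\rho=(r-r_i)/\delta$ in a neighbourhood of each jump, so that the mollified $\partial_r\bar\Omega$ becomes an $O(1)$ bump supported in an $O(1)$ set. In these coordinates the Rayleigh operator splits, to leading order, into $N-1$ \emph{local} operators -- finite-rank perturbations of multiplication by $\bar v_\theta(r_i)/r_i$ -- plus off-diagonal couplings of order $O(\delta)$ coming from $K_n(r_k/r_i)$ with $i\neq k$. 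Writing $h=h_\ast+\eta$, $z=z_\ast+\zeta$ and performing a Lyapunov--Schmidt reduction around the piecewise constant eigenpair reduces the problem to a fixed point equation for $(\eta,\zeta)$ whose nonlinearity is small in the rescaled norm; a contraction mapping argument then produces $(h_\delta,z_\delta)$ with $z_\delta\to z_\ast$ as $\delta\to 0$, so $\Im z_\delta>0$ for all sufficiently small $\delta$.

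The hard part is achieving these estimates \emph{uniformly} in $\delta$. The Biot--Savart kernel $K_n(r/s)$ has a logarithmic singularity on the diagonal $r=s$, and the mollified $\partial_r\bar\Omega$ concentrates on annuli of width $\delta$, so a careless bound costs a factor of $|\log\delta|$ that destroys the contraction. Overcoming this seems to require a careful splitting of $K_n$ into singular and regular parts together with function spaces adapted to the rescaled coordinates (for instance weighted H\"older spaces on the supports of the mollified jumps, in the spirit of \cite{CastroLear23}); once these estimates are in place, the fixed point argument delivers the desired smooth unstable vortex $\bar\Omega\in C_c^\infty(\R^2)$ with zero mean.
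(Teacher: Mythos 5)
Your two-stage plan is exactly the paper's: two jumps reduce the Rayleigh equation \eqref{eq:RSE} to a $2\times 2$ matrix $A$ whose eigenvalue leaves the real axis (Lemma~\ref{lemma:detA}, Proposition~\ref{prop:eigenvalue}), then a rescaling $r=r_j+\varepsilon\alpha$ plus a fixed point upgrades this to a smooth vortex. The Lyapunov--Schmidt reduction around the piecewise constant eigenpair is precisely what Lemma~\ref{lem:lemaf} does: choose $y$ so that the right-hand side of $(A-z)g=\cdots$ falls in $\mathrm{Im}(A-z)=\mathrm{span}(h^*)$, then contract. Two comments on the regularization step, where your account drifts from the paper.

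First, a cosmetic difference: the paper mollifies $\bar v_\theta$ via $1_{[r_1,r_2)}*\eta^\varepsilon$ rather than $\bar w$. This makes the rescaled velocity in Lemma~\ref{lemma:zoom:velocity} a \emph{local} function of $\alpha$ and avoids having to expand a nonlocal integral of the mollified vorticity; either choice would work, but the paper's is tidier.

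Second, and more substantively, the ``hard part'' you identify --- a logarithmic singularity of $K_n$ on the diagonal $r=s$ threatening a $|\log\delta|$ loss --- does not exist. From \eqref{eq:Kn:0}, $K_n(\rho)=\tfrac12\rho^{n-1}$ for $\rho<1$ and $\tfrac12\rho^{-n-1}$ for $\rho>1$: continuous at $\rho=1$, uniformly bounded by $\tfrac12$, with only a kink in the first derivative. The logarithm you have in mind belongs to the unprojected stream-function kernel $\log|x-y|$; once one fixes a nonzero angular frequency $n$, the resulting one-dimensional kernel $K_n(r/s)$ is bounded. This is exactly why the paper's fixed point runs cleanly in the plain space $L^2(I)^2\times\C$, with only the crude bounds $\|K^\varepsilon_{njk}\|_{L^\infty}\leq\tfrac12$ and $\|J_{njk}\|_{L^\infty}\lesssim 1$ and no weighted H\"older norms. (The paper flags that this boundedness is a 2D-Euler feature; for generalized SQG the projected kernel is genuinely singular and the argument harder.) So your route is correct, but you overestimated the analytic cost of the estimates. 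One small indexing slip: with $N$ intervals and $c_N=0$ there are $N-1$ jumps, so two jumps is $N=3$, not $N=2$; the paper's ansatz \eqref{ansatz:baromega} is your case with jump radii $r_1,r_2$.
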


As anticipated in the intro, our proof of Theorem \ref{thm:L} is split into two steps:

\subsection*{Step 1.1.~Piecewise constant unstable vortex}\label{Step1.1}
In Section \ref{sec:vortex} we construct an unstable vortex of the form
$$
\bar{w}
=c 1_{[0,r_1)} - 1_{[r_1,r_2)},
$$
in terms of some parameters $0<r_1<r_2<\infty$ and $c>0$, to be determined.
Firstly, we choose $c$ making the mean of $\bar{\omega}$ equals zero
$$
\int_{\R^2}\bar{\omega}(x)\dif x
=2\pi\int_0^\infty\bar{w}(r)r\dif r
=0.
$$
This condition guarantees that $\bar{v}\in L^2$.
Moreover, $\bar{v}$ is Lipschitz and compactly supported with
$\mathrm{supp}(\bar{v})=\mathrm{supp}(\bar{\omega})=B_{r_2}.$
Secondly, since
$$
\mathrm{supp}(\partial_r\bar{w})=\{r_1,r_2\},
$$
the equation \eqref{eq:RSE} turns into two conditions for the vector $h=(h(r_1),h(r_2))$ that can be written as a linear system
\begin{equation}\label{eq:A}
A h=zh,
\end{equation}
where the matrix $A\in\R^{2\times 2}$ depends on the parameters $r_1,r_2$, and the frequency $n$.
Thus, there is an eigenvector $h\neq 0$ if and only if $z$ is a root of the characteristic polynomial $\det(A-z)$. Finally, we show in Proposition \ref{prop:eigenvalue} that for any $n\geq 2$ we can choose $r_1,r_2$ such that the roots $z,z^*$ of $\det(A-z)$ satisfy $\Im z>0$.

\begin{figure}[h!]
	\centering
	\subfigure[The vorticity profile.]{\includegraphics[width=0.40\textwidth]{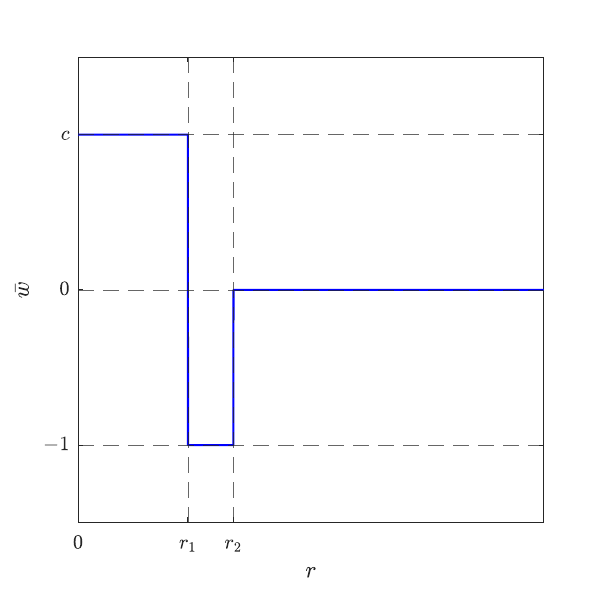}}
	\subfigure[The angular velocity profile.]{\includegraphics[width=0.40\textwidth]{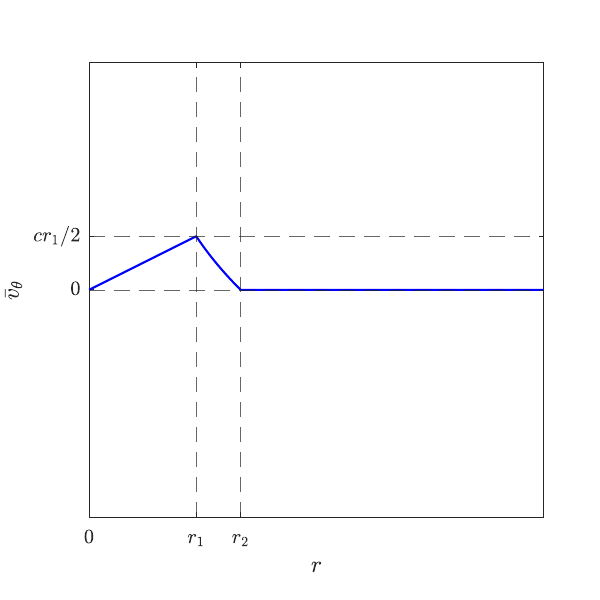}}
	\caption{The piecewise constant unstable vortex.}\label{fig:pc}
\end{figure}

\subsection*{Step 1.2.~Regularization} In Section \ref{sec:regularization} we prove that there exists a smooth vortex $\bar{\omega}^\varepsilon$, obtained by suitably regularizing $\bar{\omega}$ from \hyperref[Step1.1]{Step 1.1}, which is also unstable for some small $\varepsilon>0$. Similarly to \hyperref[Step1.1]{Step 1.1}, now we need to solve the Rayleigh stability equation \eqref{eq:RSE} in the intervals $B_\varepsilon(r_j)$ for $j=1,2$. We
rescale variables around $r_j$ writing
 $r=r_j+\varepsilon\alpha$ with $\alpha\in I=(-1,1)$.
Next, we  make the asymptotic expansions for eigenfunctions and eigenvalues. Namely, we write 
$$
h^\varepsilon(r)
=h(r_j)+\varepsilon g_j(\alpha),
\quad\quad
z^\varepsilon
=z+\varepsilon y,
$$
for some profiles $g=(g_1,g_2)\in L^2(I)^2$, and  a constant $y\in\C$, to be determined.
We also expand 
$$A^\varepsilon
g=\frac{\bar{v}_\theta^\varepsilon}{r}g+\frac{1}{n}\int_0^\infty K_n\left(\frac{r}{s}\right)(g\partial_r\bar{w}^\varepsilon)(s)\dif s
=(A+\varepsilon B)g,
$$
where $B$ is another linear operator in $L^2(I)^2$.
Therefore, the Rayleigh stability equation \eqref{eq:RSE} 
can be rewritten in the rescaled variables as
$$
(A^\varepsilon-z^\varepsilon)h^\varepsilon
=
(A-z)h
+\varepsilon(A-z)g+\varepsilon (B-y)h + \varepsilon^2 (B-y)g=0.
$$
Notice that the zero order term vanishes by \eqref{eq:A}. Since $\varepsilon>0$, 
we want to find $(g,y)$ satisfying 
\begin{equation}\label{eq:(A-z)g}
(A-z)g=(y-B)h+\varepsilon (y-B)g.
\end{equation}
The first step consists of ``inverting'' the operator $(A-z)$.
Unfortunately, since $z,z^*$ are the eigenvalues of $A$, the kernel and the image of $(A-z)$ are given by $\mathrm{Ker}(A-z)=\mathrm{span}(h)$ and $\mathrm{Im}(A-z)=\mathrm{span}(h^*)$. We bypass this obstacle by choosing $y$ in such a way that the right hand side of \eqref{eq:(A-z)g} becomes ``parallel'' to $h^*$. This allows writing \eqref{eq:(A-z)g} in the form
$$
(g,y)=H+\varepsilon R,
$$
where $H$ only depends on the $(h,z)$ solving \eqref{eq:A}, while the remainder $R$ depends also on $(g,y)$.
Thus, there is  an explicit solution for $\varepsilon=0$, and   for small enough $\varepsilon>0$  a fixed point argument in $L^2(I)^2\times\C$ yields a solution as well.
Once $\varepsilon>0$ is fixed, Theorem \ref{thm:L} holds by taking
$$
\bar{\Omega}=\bar{\omega}^\varepsilon.
$$

From now on the smooth vortex $\bar{\Omega}$ is fixed and thus we will omit $\varepsilon$ for the sake of simplicity. The use of capital letters will become clear in the following step.

\subsection*{Step 2.~Self-similar instability}\label{Step2}
Let us start by writing the Euler equation in self-similar coordinates
$$
\tau=\frac{1}{ab}\log t,
\quad\quad
X=\frac{x}{(abt)^{\nicefrac{1}{a}}},
$$
in terms of two parameters $0<a,b\leq 1$ to be determined.
Notice that the logarithmic time interval is $\R$ (instead of $\R_+$). In particular, the physical time $t=0$ corresponds to
logarithmic time $\tau=-\infty$.
It is straightforward to check that $(\omega,f)$ given by the change of variables
$$
\omega(t,x)=\frac{1}{ab t}\Omega(\tau,X),
\quad\quad
f(t,x)=\frac{1}{(ab t)^2}F(\tau,X),
$$
is a solution to the Euler equation if and only if $(\Omega,F)$ solves the \textit{self-similar Euler equation}
\begin{equation}\label{eq:Euler:SS}
\partial_\tau\Omega
+V\cdot\nabla\Omega
-bS\Omega
=F,
\end{equation}
where $S=S_a$ is the linear operator defined by
\begin{equation}\label{eq:Sdefinition}
S\Omega=\left(a+X\cdot\nabla \right)\Omega.\end{equation}
Notice that \eqref{eq:Euler:SS} agrees with the Euler equation \eqref{eq:Euler} but for the extra term $bS\Omega$, which vanishes (formally) as $b\to 0$.
The corresponding velocities and Eulerian forces and are linked by
$$
v(t,x)=(abt)^{\frac{1}{a}-1}V(\tau,X),
\quad\quad
g(t,x)=(abt)^{\frac{1}{a}-2}G(\tau,X),
$$
that is, $(v,V,g,G)$ are recovered from $(\omega,\Omega,f,F)$ through the Biot-Savart law \eqref{eq:BiotSavart}
 respectively.

\begin{Rem}
We have adopted the overall convention in \cite{ABCDGMKpp}: Given some object related to the Euler equation, the corresponding object in self-similar coordinates will be denoted with the same letter in capital case. The only exception will be \eqref{firstsolution}.
\end{Rem}

\begin{Rem}
In \cite{ABCDGMKpp} the parameters are given by $\alpha=a$ and $\beta=\frac{1}{ab}$. The reason why we consider this change of variables is purely cosmetic. For instance, it makes more manageable the spectral analysis in the limit $b\to 0$.
\end{Rem}

In the setting of Theorem \ref{thm:Vishik}, we take the background solution as the time-dependent vortex
\begin{equation}\label{firstsolution}
\omega_0(t,x)
=\frac{1}{abt}\bar{\Omega}(X),
\quad\quad
v_0(t,x)
=(abt)^{\frac{1}{a}-1}\bar{V}(X),
\end{equation}
where $\bar{\Omega}$ is the unstable vortex from Theorem \ref{thm:L}, and $\bar{V}$ the corresponding velocity  given by the Biot-Savart law  \eqref{eq:BiotSavart}.
Given $2<p<\infty$, we fix the parameter $a$ in the regime
$$
0<a p< 2,
$$
to guarantee that the integrability condition \eqref{vorticityintegrability} is satisfied. Indeed, we have the scaling
\begin{equation}\label{eq:scalingLpw0}
\|\omega_0(t)\|_{L^p}
=(abt)^{\frac{2}{a p}-1}\|\bar{\Omega}\|_{L^p},
\quad\quad
\|v_0(t)\|_{L^2}
=(abt)^{\frac{2}{a}-1}\|\bar{V}\|_{L^2}.
\end{equation}

The force is defined \textit{ad hoc} in such a way that \eqref{firstsolution} becomes a solution. By the radial symmetry of the vortex (and defining the pressure properly) the quadratic term in the Euler equation vanishes, that is, $\bar{V}\cdot\nabla\bar{\Omega}=0$. Thus, the force is defined by
$$
\partial_t \omega_0 = f,
\quad\quad
\partial_t v_0 = g,
$$
or, equivalently,
$$
F(X)
=-bS\bar{\Omega},
\quad\quad
G(X)
=b(a-S)\bar{V}.
$$
Observe that $(\bar{\Omega},\bar{V},F,G)$ are supported on $\{|X|\leq r_2+\varepsilon\}$, and that the integrability condition \eqref{forceintegrability} is also satisfied
\begin{equation}\label{eq:scalingLpf}
\int_0^t
\|f(s)\|_{L^p}\dif s
=\frac{(abt)^{\frac{2}{ap}-1}}{\frac{2}{ap}-1}
\|F\|_{L^p},
\quad\quad
\int_0^t
\|g(s)\|_{L^2}\dif s
=\frac{(abt)^{\frac{2}{a}-1}}{\frac{2}{a}-1}
\|G\|_{L^2}.
\end{equation} 

Analogously to \hyperref[Step1]{Step 1}, the deviation $\Omega$ of a close but different solution
$
\bar{\Omega}+\epsilon\Omega$
to the self similar Euler equation 
must satisfy
\begin{equation}\label{eq:Euler:lin}
(\partial_\tau-L_b)\Omega
+\epsilon (V\cdot\nabla\Omega)=0,
\end{equation}
where $L_b=L_{b,\bar{\Omega}}$ is the linearization of the self-similar Euler equation \eqref{eq:Euler:SS} around $\bar{\Omega}$
\begin{equation}\label{Lb}
L_b
=L + bS,
\end{equation}
for $S$ defined in \eqref{eq:Sdefinition}. The unstable solutions in the limit $\epsilon\to 0$ are governed by exponentially growing solutions to the linearized problem, which we denote by 
\begin{equation}\label{Omegalin}
\Omega^{\text{lin}}(\tau,X)
=e^{\lambda\tau}W(X).
\end{equation}
Hence, we need to understand the corresponding eigenvalue problem
\begin{equation}\label{eq:Lb}
L_bW=\lambda W,
\end{equation}
this time for the linear operator  $L_b=L_{b,\bar{\Omega}}$.

\begin{defi} 
We say that the vortex $\bar{\Omega}$ is \textit{self-similarly unstable} if, for some $n\in\N$, there exists $0\neq W\in U_n$ satisfying \eqref{eq:Lb} with $\lambda\in\C_+$. 
\end{defi}

An obvious but crucial observation is that
$$
e^{\lambda\tau}
=t^{\frac{\lambda}{ab}}
\to 0,
$$
as $\tau\to -\infty$ (or equivalently $t\to 0$), and therefore
\begin{equation}\label{eq:Omegalincero}
\Omega^{\text{lin}}|_{\tau=-\infty}=0.
\end{equation}

The eigenvalue problem \eqref{eq:Lb} for $W\in U_n$ is the \textit{self-similar Rayleigh stability equation}
\begin{equation}\label{eq:RSE:b}
\left(\frac{\bar{V}_\theta}{R}-\frac{b}{in}R\partial_R\right)W_n +\frac{\partial_R\bar{W}}{n}\int_0^\infty K_n\left(\frac{R}{S}\right)
W_n(S)\dif S
=zW_n,
\end{equation}
in polar coordinates $X=Re^{i\theta}$,
where we have replaced $\lambda=ab-inz$.
Unfortunately, the new term $R\partial_R W_n$ prevents from reproducing \hyperref[Step1]{Step 1} for $b>0$.
Remarkably, Vishik circumvented this obstacle by showing that the spectrum $\sigma(L_b)\cap\C_+$ converges (in a suitable sense) to $\sigma(L)\cap\C_+$ as $b\to 0$.
To this end, it is useful to decompose
\begin{equation}\label{LbExpression}
L_b=ab+T_b+K,
\end{equation}
where
\begin{align*}
T_b\Omega
&=-(\bar{V}-bX)\cdot\nabla\Omega,\\
K\Omega
&=-V\cdot\nabla\bar{\Omega},
\end{align*}
with $V=\nabla^\perp\Delta^{-1}\Omega$. 
Notice that the first term only shifts the spectrum by $ab$. 
As we will explain in Section \ref{sec:selfsimilar}, the transport operator $T_b$ is invertible in $\C_+$. Since $K$ is compact, the Operator theory allows to conclude that $\sigma(L_b)\cap\C_+$ consists of isolated eigenvalues. 
Since the dependence on $b$ is continuous, there is hope to find an eigenvalue $\lambda_b\in\sigma(L_b)\cap\C_+$ close to $\lambda\in\sigma(L)\cap\C_+$. In Section \ref{sec:selfsimilar} we will review and streamline the proof of the following result from \cite{ABCDGMKpp}:

\begin{thm}\label{thm:Lb}
The vortex $\bar{\Omega}$ from Theorem \ref{thm:L} is self-similarly unstable for some $b>0$. Moreover, the corresponding eigenfunction satisfies $W\in C_c^2(\R^2)$. 
\end{thm}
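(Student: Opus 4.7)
The plan is to derive the self-similar instability from the Eulerian instability of Theorem \ref{thm:L} via a spectral perturbation argument in $b$, following and streamlining the strategy in \cite{ABCDGMKpp}, and then to bootstrap regularity of the eigenfunction from the ODE structure inherited on the radially symmetric subspaces. The main obstacle is controlling the unbounded perturbation $bS$ when passing from the resolvent of $L_0=L$ to that of $L_b$.

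First, I would work on the invariant subspace $U_n$ corresponding to the unstable frequency $n\in\N$ provided by Theorem \ref{thm:L}, and use the decomposition \eqref{LbExpression}. Writing $W=W_n(R)e^{in\theta}\in U_n$, the transport operator becomes a first-order ODE
\[
T_b W = \bigl(bR\,\partial_R W_n - in R^{-1}\bar V_\theta\,W_n\bigr)\,e^{in\theta}.
\]
For $\mu\in\C_+$ the resolvent $(\mu-T_b)^{-1}$ can be written explicitly by the method of characteristics: the characteristics are radial with integrating factor $\exp\!\bigl(b^{-1}\!\int(\mu+inR^{-1}\bar V_\theta)\,d\log R\bigr)$, and the condition $\Re\mu>0$ is exactly what makes the resulting retarded integrals define a bounded operator on $L^2(R\,dR)$. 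This shows $\sigma(T_b)\cap\C_+=\emptyset$; since $K$ is compact, Weyl's theorem yields $\sigma_{\mathrm{ess}}(L_b)\subset\{\Re\mu\leq ab\}$, so $\sigma(L_b)\cap\{\Re\mu>ab\}$ consists of isolated eigenvalues of finite algebraic multiplicity.

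Next, fix the eigenvalue $\lambda_0\in\sigma(L_0)\cap\C_+$ from Theorem \ref{thm:L}, choose a small positively oriented contour $\gamma\subset\C_+$ around $\lambda_0$ separating it from the rest of $\sigma(L_0)$, and consider the Riesz projectors
\[
P_b=\frac{1}{2\pi i}\oint_\gamma (\mu-L_b)^{-1}\,d\mu.
\]
I would show $\|P_b-P_0\|\to 0$ as $b\to 0^+$, which forces $\mathrm{rank}\,P_b=\mathrm{rank}\,P_0\geq 1$ for $b$ small and hence delivers an eigenvalue $\lambda_b\in\C_+$ of $L_b$ enclosed by $\gamma$ with $\Re\lambda_b>ab$. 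A naive Neumann expansion in $bS$ fails because $S$ is unbounded; instead I would factor $\mu-L_b=(\mu-ab-T_b)\bigl(I+(\mu-ab-T_b)^{-1}K\bigr)$, use the explicit formula from step one to obtain uniform bounds and strong convergence $(\mu-ab-T_b)^{-1}\to(\mu-T_0)^{-1}$ on $\gamma$, and then promote strong to norm convergence by composing with the compact operator $K$, which is exactly what the Riesz integral requires.

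Finally, for the regularity of the eigenfunction, I would return to the ODE form
\[
bR\,\partial_R W_n = in\bigl(\bar V_\theta/R - z_b\bigr)W_n + i\,\partial_R\bar W\cdot\mathcal{K}(R),
\]
where $\mathcal{K}(R)=\int_0^\infty K_n(R/S)\,W_n(S)\,dS$ is continuous and bounded because $W_n\in L^2$ and $\bar\Omega\in C_c^\infty$. Outside $\mathrm{supp}(\bar\Omega)$ the forcing vanishes and the homogeneous solutions behave like $R^{-inz_b/b}$, whose modulus grows as $R^{n\,\Im z_b/b}$; since $\Re\lambda_b>ab$ is equivalent to $\Im z_b>0$, the $L^2$ requirement forces $W_n\equiv 0$ for $R\geq R_0$, giving compact support. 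Inside the support the $C^\infty$ coefficients of $\bar V_\theta$ and $\bar W$, combined with a single bootstrap through $\mathcal{K}$, upgrade $W_n$ to $C^2$, so that $W\in C_c^2(\R^2)$, completing the proof.
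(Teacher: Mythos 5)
Your overall strategy mirrors the paper's closely: an explicit resolvent for $T_b$, norm continuity of the compact factor $(\mu-ab-T_b)^{-1}K$, a Riesz-projection argument to perturb the Eulerian eigenvalue $\lambda_0$ to a self-similar eigenvalue $\lambda_b$ for small $b$, and an ODE bootstrap for regularity. The paper phrases the projection step as a contradiction while you run it directly, but that difference is cosmetic. There is, however, one genuine gap, together with a smaller over-claim.

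\textbf{Main gap: $C^2$ at the origin.} Your regularity argument establishes compact support from $\Im z_b>0$ (equivalently $\Re\lambda_b>ab$), and then asserts ``a single bootstrap upgrades $W_n$ to $C^2$.'' But the ODE has a regular singular point at $R=0$: on $(0,r_1-\varepsilon)$ the forcing vanishes, $\bar V_\theta/R\equiv c/2$, and the solution is $W_n(R)=C_0\,R^{\alpha}$ with $\Re\alpha=\dfrac{\Re\lambda_b}{b}-a$. For $W=W_n e^{ijn\theta}$ to lie in $C^2(\R^2)$ one needs $\Re\alpha\geq 2$, i.e.\ $\Re\lambda_b\geq(2+a)b$. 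This is strictly stronger than $\Re\lambda_b>ab$, which is all your argument records, so as written the $C^2$ conclusion does not follow. The fix is available in your setup: since $\lambda_b$ is trapped inside the fixed contour $\gamma$ around $\lambda_0$, $\Re\lambda_b$ is bounded below by a constant independent of $b$, so $\Re\lambda_b/b\to\infty$ as $b\to 0^+$, comfortably exceeding $2+a$ for $b$ small; but this must be said. The paper handles this quantitatively by proving directly that $\sigma(L_b)\cap\{\Re\lambda>3b\}\neq\emptyset$ for some $0<b\leq\Re\lambda_0/4$, and the resulting H\"older exponent $\gamma=\Re\lambda/b-a\geq 2$ is exactly what guarantees $W\in C_c^2$.

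\textbf{Secondary over-claim: norm convergence of $P_b$.} Your plan is to show $\|P_b-P_0\|\to 0$ and deduce rank stability. What the factorization $\mu-L_b=(\mu-ab-T_b)\bigl(I-(\mu-ab-T_b)^{-1}K\bigr)$ actually yields is: norm convergence of the compact factor $(\mu-ab-T_b)^{-1}K$, but only \emph{strong} convergence of $(\mu-ab-T_b)^{-1}$ itself. Hence $(\mu-L_b)^{-1}\to(\mu-L_0)^{-1}$ strongly, not in norm, and $\|P_b-P_0\|\to 0$ is not justified. Fortunately you do not need it: $P_bW_0\to P_0W_0=W_0\neq 0$ (dominated convergence over the contour using the uniform resolvent bound) already forces $P_b\neq 0$ for small $b$, which delivers an eigenvalue of $L_b$ inside $\gamma$. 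The norm convergence of the compact factor is still needed, but only to keep $(I-C_b)^{-1}$ uniformly bounded on $\gamma$ so the resolvent exists there; state that role precisely rather than claiming operator-norm convergence of the Riesz projection.

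One last small point: on the invariant subspace you work only in $U_n$, whereas the paper works in the full $n$-fold symmetric space $L_n^2=\bigoplus_j U_{jn}$ and then projects the eigenfunction onto some $U_{jn}$ with $j\neq 0$. For Theorem \ref{thm:Lb} alone your restriction is legitimate and slightly cleaner, but be aware the paper works on $L_n^2$ because the growth bound of $e^{sL_b}$ on the whole $L_n^2$ is needed later in the nonlinear step.
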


\subsection*{Step 3.~Nonlinear instability}\label{Step3}

The last step for proving Theorem \ref{thm:Vishik} requires controlling the nonlinear effects.
Coming back to \hyperref[Step2]{Step 2} (recall \eqref{eq:Euler:lin}-\eqref{eq:Lb}) we decompose the deviation into
$$
\Omega=\Omega^{\text{lin}}+\epsilon\Omega^{\text{cor}},
$$
where the correcting term $\Omega^{\text{cor}}$ must satisfy the equation
\begin{equation}\label{eq:Euler:cor}
(\partial_\tau - L_b)\Omega^{\text{cor}}
+\underbrace{(V^{\text{lin}}+\epsilon V^{\text{cor}})\cdot\nabla(\Omega^{\text{lin}}+\epsilon\Omega^{\text{cor}})}_{\mathcal{F}}
=0,
\end{equation}
coupled with the initial condition
\begin{equation}\label{eq:Omegacorinitial}
\Omega^{\text{cor}}|_{\tau=-\infty}=0.
\end{equation}
Recall that $\Omega^{\text{lin}}
=\Re(e^{\lambda\tau}W)$, while $V^{\text{lin}}$, $V^{\text{cor}}$ are the velocities associated to $\Omega^{\text{lin}}$, $\Omega^{\text{cor}}$ respectively.
If we interpret $\mathcal{F}$ as a forcing term, since the linear part decays as $e^{\Re\lambda\tau}$ and the contribution of the quadratic term is expected to be negligible for short times, one can naively expect to
gain a slightly faster exponential decay by exploiting the Duhamel formula. In Section \ref{sec:nonlinear} we will review this strategy following \cite{ABCDGMKpp} and
provide some simplifications, mainly due to the compact support of our vortex. We will prove the following result: 

\begin{thm} \label{thm:nonlinear} 
There exists a solution $\Omega^{\text{cor}}$ to \eqref{eq:Euler:cor}\eqref{eq:Omegacorinitial} satisfying
\begin{equation}\label{eq:expdecay}
\|\Omega^{\text{cor}}(\tau)\|_{L^p}
=o(e^{\Re\lambda\tau}),
\end{equation}
as $\tau\to -\infty$.
\end{thm}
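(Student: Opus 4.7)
The plan is to reformulate \eqref{eq:Euler:cor}\eqref{eq:Omegacorinitial} as the Duhamel integral equation
\[
\Omega^{\text{cor}}(\tau)=-\int_{-\infty}^{\tau}e^{(\tau-s)L_b}\,\mathcal{F}[\Omega^{\text{cor}}](s)\,\dif s,
\]
and to solve it by a contraction mapping argument in a time-weighted Banach space that forces decay as $\tau\to-\infty$ at a rate strictly between $e^{\Re\lambda\tau}$ and $e^{2\Re\lambda\tau}$. Thanks to the smoothness and compact support of $\bar{\Omega}$ and $W$ (Theorems \ref{thm:L}, \ref{thm:Lb}), a natural ambient space is $Y=L^p\cap H^k$ with $k$ large, which is a Banach algebra modulo one derivative and on which the Biot--Savart law \eqref{eq:BiotSavart} gains one degree of regularity.

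The first key ingredient is a semigroup bound of the form
\[
\bigl\|e^{\tau L_b}\bigr\|_{Y\to Y}\leq C_\eta\,e^{(\Re\lambda+\eta)\tau},\qquad\tau\geq 0,
\]
valid for every $\eta>0$, after replacing $\lambda$ by an eigenvalue of maximal real part in $\sigma(L_b)\cap\C_+$ (which exists because the compactness argument underlying Theorem \ref{thm:Lb} confines such eigenvalues to a compact set in $\C_+$). Using the decomposition \eqref{LbExpression}, $L_b-ab$ is the sum of a transport generator $T_b$ whose semigroup can be controlled explicitly via the characteristics $\dot{X}=\bar{V}(X)-bX$, and of a compact operator $K$; the standard spectral theory for transport-plus-compact generators then identifies the growth rate of $e^{\tau L_b}$ with its spectral bound $\Re\lambda$.

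Next I would fix $\mu$ with $\Re\lambda<\mu<2\Re\lambda$, choose $\eta$ so that $\Re\lambda+\eta<\mu$, and work in
\[
X_{\tau_0}=\Bigl\{\Omega\in C((-\infty,\tau_0];Y)\,:\,\|\Omega\|_{X_{\tau_0}}:=\sup_{\tau\leq\tau_0}e^{-\mu\tau}\|\Omega(\tau)\|_Y<\infty\Bigr\},
\]
for $\tau_0\ll 0$ to be determined. Splitting $\mathcal{F}$ as in \eqref{eq:Euler:cor} into the pieces $V^{\text{lin}}\cdot\nabla\Omega^{\text{lin}}$, $V^{\text{lin}}\cdot\nabla\Omega^{\text{cor}}+V^{\text{cor}}\cdot\nabla\Omega^{\text{lin}}$, and $\epsilon V^{\text{cor}}\cdot\nabla\Omega^{\text{cor}}$, the algebra property of $Y$ together with the Biot--Savart gain yields $Y$-bounds of order $e^{2\Re\lambda s}$, $\|\Omega^{\text{cor}}\|_{X_{\tau_0}}e^{(\Re\lambda+\mu)s}$, and $\epsilon\|\Omega^{\text{cor}}\|_{X_{\tau_0}}^2 e^{2\mu s}$ respectively. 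Convolving these with the semigroup kernel $e^{(\Re\lambda+\eta)(\tau-s)}$ produces integrals that converge at $s=-\infty$, since $2\Re\lambda,\,\Re\lambda+\mu,\,2\mu$ all strictly exceed $\Re\lambda+\eta$, and that carry a factor $e^{\mu\tau}$ whose prefactor can be made arbitrarily small by pushing $\tau_0$ to $-\infty$. This gives both self-mapping and contractivity of the Duhamel operator on a small ball of $X_{\tau_0}$, producing a unique fixed point $\Omega^{\text{cor}}$; the bound $\|\Omega^{\text{cor}}(\tau)\|_{L^p}\leq\|\Omega^{\text{cor}}(\tau)\|_Y\lesssim e^{\mu\tau}$ with $\mu>\Re\lambda$ is exactly \eqref{eq:expdecay}.

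The main obstacle is the semigroup estimate with the sharp rate $\Re\lambda+\eta$: for general $C_0$-semigroups the growth bound can exceed the spectral bound, so one must genuinely exploit the structure \eqref{LbExpression} --- explicit transport dynamics for $T_b$ plus compact perturbation $K$ on a compactly supported background --- to equate them. A secondary but real technical point is ensuring that the bilinear estimates close inside $Y$; this forces $k$ to be chosen high enough that $Y$ is an algebra and that the Biot--Savart gain compensates the derivative loss in $V\cdot\nabla\Omega$, and it is precisely the compact support of $\bar{\Omega}$ and $W$ that makes this quantitatively clean.
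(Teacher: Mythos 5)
Your overall architecture --- Duhamel followed by a contraction in a time-weighted space on $(-\infty,\tau_0]$ --- is a legitimate alternative to what the paper does, which is to solve on $[-k,\tau]$ with zero data at $\tau=-k$, run a bootstrap/continuation argument bounding $\|\Omega^{\text{cor}}_k(\tau)\|_Y$ by $e^{(\Re\lambda+\delta_0)\tau}$ up to a maximal time $\tau_k$, improve the bound via Lemma~\ref{lemma:nonlineark} to conclude $\inf_k\tau_k>-\infty$, and then pass to a weak limit. Your exponent bookkeeping ($\Re\lambda<\mu<2\Re\lambda$, $\eta<\mu-\Re\lambda$) is consistent with the source/cross/quadratic splitting you describe, and the convolution integrals do converge as claimed.

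The serious gap is the semigroup estimate in your $Y=L^p\cap H^k$. You need $\|e^{\tau L_b}\|_{Y\to Y}\le C_\eta e^{(\Re\lambda+\eta)\tau}$, but Proposition~\ref{prop:GrowthBound} only gives this on $L_n^2$, and the growth bound of a $C_0$-semigroup in $L^2$ does not transfer to Sobolev norms. Concretely, by Lemma~\ref{lemma:Tbsemigroup} the transport semigroup acts by $e^{sT_b}\Omega=\Omega\circ\bar X_b(s,\cdot)^{-1}$, so in $H^k$ it costs a factor of order $\|D\bar X_b(s,\cdot)^{-1}\|_{L^\infty}^{\,k}$, which grows like $e^{Cks}$ with $C\sim\|\bar V_b\|_{\rm Lip}$; hence $\omega_{\text{ess}}(T_b)$ computed on $H^k$ is of order $Ck-b$, not $-b$, and Proposition~\ref{prop:growthbound} applied in $H^k$ could easily return a growth bound far exceeding $\Re\lambda$ (recall $\Re\lambda>3b$ but $b$ is small while $C$ is a fixed constant determined by $\bar V$). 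You identify this as ``the main obstacle'' and then postulate it anyway; the paper is constructed precisely to \emph{avoid} needing it. Indeed, the Duhamel bound from Proposition~\ref{prop:GrowthBound} is used only for the baseline $L^2$ estimate (Lemma~\ref{lemma:irstinequality}), while the derivative parts of the $Y$-norm are handled by direct energy estimates on the differentiated equation in polar coordinates (Lemmas~\ref{lemma:DthetaOmega} and~\ref{lemma:DrOmega}), using that $\bar V\cdot\nabla=\tfrac{\bar V_\theta}{R}\partial_\theta$ and $X\cdot\nabla=R\partial_R$ behave transparently under $\partial_\theta$, with the angular bound then feeding the radial one. That ordering is the actual mechanism replacing the higher-order semigroup estimate you would need, and without a proof of the latter your contraction does not close.

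Two secondary points worth noting. First, the paper works throughout in the $n$-fold symmetric sector $L_n^2$: Lemma~\ref{lemma:VDVL2} and Proposition~\ref{prop:GrowthBound} are stated there, and Remark~\ref{rem:smoothness} together with Proposition~\ref{propABCDGMKpp} ensures the symmetry propagates; your iteration scheme should verify the same invariance, since the Poincar\'e-type bound on $V$ fails without it. Second, the paper's anisotropic norm $\|\Omega\|_Y=\|\Omega\|_{L^2}+\||X|\nabla\Omega\|_{L^2}+\|\nabla\Omega\|_{L^4}$ is tuned so that the polar energy estimates close with exactly the decay the nonlinearity provides; replacing it with $L^p\cap H^k$ discards that structure and is not obviously compatible with the available bounds even if the semigroup estimate were granted.
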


\subsection*{Proof of Theorem \ref{thm:Vishik}}\label{sec:proofVishik}
We conclude this section by showing how Theorems \ref{thm:Lb} and \ref{thm:nonlinear} allow proving Theorem \ref{thm:Vishik}.
Coming back to the Eulerian coordinates, 
Theorems \ref{thm:Lb} and \ref{thm:nonlinear} imply  that
the vorticity 
$$
\omega_\epsilon
=\omega_0
+\epsilon\omega^{\text{lin}}
+\epsilon^2\omega^{\text{cor}},
$$
satisfies that $\omega_\epsilon|_{t=0}=0$, because of \eqref{eq:scalingLpw0}, \eqref{eq:Omegalincero} and \eqref{eq:Omegacorinitial}, and it is a solution to the Euler equation. We just need to  show that for different $\epsilon$'s we have different solutions.  
We  recall that
$$
\omega_0(t,x)
=\frac{1}{abt}\bar{\Omega}(X),
\quad\quad
\omega^{\text{lin}}(t,x)
=\frac{1}{abt}\Omega^{\text{lin}}(\tau,X),
\quad\quad
\omega^{\text{cor}}(t,x)
=\frac{1}{abt}\Omega^{\text{cor}}(\tau,X),
$$
where we have applied the convention that capital letters and $\tau$ stand for self-similar variables. 
Thus, by applying the scaling
$$
\|\omega^{\text{lin}}(t)\|_{L^p}
= (abt)^{\frac{2}{ap}-1}\|\Re(e^{\lambda\tau}W)\|_{L^p},
\quad\quad
\|\omega^{\text{cor}}(t)\|_{L^p}
=(abt)^{\frac{2}{ap}-1}o(e^{\Re\lambda\tau}),
$$
and the reverse triangle inequality, we obtain
$$
(abt)^{1-\frac{2}{ap}}e^{-\Re\lambda\tau}\|(\omega_\epsilon-\omega_{\bar{\epsilon}})(t)|\|_{L^p}
\geq
|\epsilon-\bar{\epsilon}|
(\|\Re(e^{i\Im\lambda\tau}W)\|_{L^p}
-o(1)).
$$
If $\Im\lambda\neq 0$, the right-hand side is positive in a sequence of times $\tau_k=\tau_0-\frac{2\pi k}{\Im z}\to -\infty$ as $k\to\infty$. If $\Im\lambda=0$, we can assume from the beginning that $W$ is real-valued. Otherwise, by Remark \ref{rem:realvalued}, it would suffice to take its imaginary part instead.
This allows concluding that
$\omega_\epsilon\neq\omega_{\bar{\epsilon}}$ whenever $\epsilon\neq\bar{\epsilon}$. Finally, we will prove within Section \ref{sec:Proofnonlinear} that the velocities $v_\epsilon=\nabla^\perp\Delta^{-1}\omega_\epsilon$ belong to $L_t^\infty L^2$.

\begin{Rem}
Notice that we have only considered $\epsilon=0,1$ in Theorem \ref{thm:Vishik}, while there are indeed infinitely many different solutions indexed by $\epsilon\geq 0$. Although these solutions depend on $\epsilon$, we have abbreviated $\Omega_\epsilon^{\text{cor}}=\Omega^{\text{cor}}$ to lighten the notation.
\end{Rem}

\begin{Rem}
In principle, the solutions we have constructed so far blow up in infinite time due to the scaling \eqref{eq:scalingLpf}. However, this can be easily solved by taking $f$ and $g$ vanishing for $t\geq 1$.
\end{Rem}

\section{Piecewise constant unstable vortex}\label{sec:vortex}

In this section we prove Theorem \ref{thm:L}.
We start by rigorously deriving the Rayleigh stability equation \eqref{eq:RSE}. 
Firstly, we write the (unforced) Euler equation
in polar coordinates $x=re^{i\theta}$
\begin{equation}\label{eq:Eulerpolar}
\partial_t\omega
+v_r\partial_r\omega
+\frac{1}{r}v_\theta\partial_\theta\omega
=0,
\end{equation}
where $v_r=v\cdot e_r$ and $v_\theta=v\cdot e_\theta$ are the radial and angular components of the velocity respectively,
in the standard basis $e_r=e^{i\theta}$ and $e_\theta=ie^{i\theta}$.

As we mentioned in Section \ref{sec:sketch}, we restrict to the case of vortices
$$
\bar{\omega}(x)
=\bar{w}(r).
$$
It is straightforward to see that the corresponding velocity  satisfies
$$
\bar{v}(x)
=\bar{v}_\theta(r)e_\theta,
$$
namely, $\mathrm{div}\,\bar{v}=0$, while
$\mathrm{curl}\,\bar{v}=\bar{\omega}$ is equivalent to
\begin{equation}\label{eq:BSvortex}
r\bar{w}=\partial_r(r\bar{v}_\theta).
\end{equation}

The linearization \eqref{L} of the Euler equation \eqref{eq:Eulerpolar} around $\bar{\omega}$ reads as
\begin{equation}\label{eq:Euler:vortex}
Tw=-\frac{1}{r}\bar{v}_\theta\partial_\theta w,
\quad\quad
Kw=-v_r\partial_r\bar{w},
\end{equation}
where $v$ is recovered from $w$ through the Biot-Savart law \eqref{eq:BiotSavart}.
Notice that only the radial component $v_r$ contributes in \eqref{eq:Euler:vortex}.
In the next lemma we compute $v_r$ for vorticities in the class \eqref{eq:Un}.

\begin{lemma}\label{lemma:vr:eigenfunction}
For any $n\geq 1$ and $w\in U_n$,
$$
v_r
=ie^{in\theta}\int_0^\infty
K_n\left(\frac{r}{s}\right)
w_n(s)\dif s,
$$
where
\begin{equation}\label{eq:Kn:0}
K_n(\rho)
=\left\lbrace
\begin{array}{cl}
	\frac{1}{2}\rho^{-1+n}, & \rho<1,\\[0.1cm]
	\frac{1}{2}\rho^{-1-n}, & \rho>1.
\end{array}
\right.
\end{equation}
\end{lemma}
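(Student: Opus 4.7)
The plan is to reduce the Biot--Savart law to a scalar ODE in the radial variable, solve it via the Green's function for the reduced Laplacian on each frequency, and then read off the radial component of $v = \nabla^\perp\psi$ in polar coordinates. Concretely, for $w(x) = w_n(r)e^{in\theta}\in U_n$, I look for a stream function of the same angular form, $\psi(x) = \psi_n(r)e^{in\theta}$, satisfying $\Delta\psi = w$. Using the standard expression
$$
\Delta\psi = \psi_n''(r)e^{in\theta} + \frac{1}{r}\psi_n'(r)e^{in\theta} - \frac{n^2}{r^2}\psi_n(r)e^{in\theta},
$$
the Poisson equation separates into the scalar ODE
$$
\psi_n'' + \frac{1}{r}\psi_n' - \frac{n^2}{r^2}\psi_n = w_n,\qquad r>0,
$$
which I will solve with $\psi_n$ bounded at $r=0$ and decaying as $r\to\infty$, consistent with the Biot--Savart convention $v=\nabla^\perp\Delta^{-1}w$.

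Next I build the Green's function from the two homogeneous solutions $r^n$ and $r^{-n}$. Matching continuity and the unit jump in the derivative at $r=s$ (with Wronskian $-2n/r$) yields
$$
G(r,s) = -\frac{1}{2n}\begin{cases} r^{n}s^{1-n}, & r<s, \\ r^{-n}s^{1+n}, & r>s, \end{cases}
$$
so that $\psi_n(r)=\int_0^\infty G(r,s)\,w_n(s)\,ds$. The boundary behavior at $0$ and $\infty$ is clear from the explicit formula, confirming that this selects the correct inverse of $\Delta$.

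Finally, I pass to polar components of $v = \nabla^\perp\psi$. Using $\nabla\psi = (\partial_r\psi)e_r + r^{-1}(\partial_\theta\psi)e_\theta$ and the rotation identities $e_r^\perp = e_\theta$, $e_\theta^\perp = -e_r$, one gets
$$
v_r = -\frac{1}{r}\partial_\theta\psi = -\frac{in}{r}\,e^{in\theta}\,\psi_n(r).
$$
Substituting the Green's function representation and simplifying the two cases with $\rho=r/s$ produces exactly the kernel $K_n(\rho) = \tfrac{1}{2}\rho^{n-1}$ for $\rho<1$ and $\tfrac{1}{2}\rho^{-n-1}$ for $\rho>1$, with the prefactor $i$ emerging from the cancellation $-in/r \cdot (-1/(2n)) \cdot r^{\pm n} s^{1\mp n} = \tfrac{i}{2}\rho^{\mp(n\pm 0)}$. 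This yields the claimed formula.

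There is no real obstacle here beyond bookkeeping of signs and the factor $n$; the one point that deserves a brief remark is why the Green's function above is the correct choice (i.e.\ why boundedness at the origin and decay at infinity pin down $\Delta^{-1}$ uniquely on $U_n$ for $n\geq 1$), which follows from the fact that both homogeneous solutions $r^{\pm n}$ are excluded by these one-sided growth conditions.
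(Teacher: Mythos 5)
Your proof is correct, and it takes a genuinely different route from the paper's. The paper works directly with the Biot--Savart kernel $\frac{1}{2\pi i}\int \frac{w(y)}{x-y}\,dy$, writes $v_r$ in polar coordinates as a double integral, and reduces to the trigonometric kernel $K_n(\rho) = \frac{1}{2\pi}\int_0^{2\pi}\frac{\sin\vartheta\sin(n\vartheta)}{|\rho - e^{i\vartheta}|^2}\,d\vartheta$, which it then evaluates via the Residue Theorem (relegated to an appendix). You instead go through the stream function: solve $\Delta\psi = w$ on the $n$-th Fourier mode via the radial ODE $\psi_n'' + r^{-1}\psi_n' - n^2 r^{-2}\psi_n = w_n$, build the Green's function from $r^{\pm n}$, and read off $v_r = -r^{-1}\partial_\theta\psi$. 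Your route is arguably more elementary (pure ODE bookkeeping, no contour integration) and makes the structure of the kernel — a matched pair of power laws — transparent from the start, whereas the paper's route keeps the computation purely in terms of the Biot--Savart integral at the cost of a residue calculation. The one point you flag (why the bounded-at-$0$/decaying-at-$\infty$ Green's function coincides with $\Delta^{-1}$ from the Biot--Savart convention) is indeed the only genuine verification needed; one quick way to confirm it is to note that the Fourier expansion $\log|re^{i\theta}-se^{i\vartheta}| = \log\max(r,s) - \sum_{m\geq 1}\frac{1}{m}\bigl(\tfrac{\min(r,s)}{\max(r,s)}\bigr)^m\cos(m(\theta-\vartheta))$ of the Newtonian potential reproduces exactly your kernel on the $n$-th mode.
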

\begin{proof}
By writing the Biot-Savart law \eqref{eq:BiotSavart} in polar coordinates ($x=re^{i\theta}$, $y=se^{i\vartheta}$)
$$
v(x)^*
=\frac{1}{2\pi i}\int_{\R^2}
\frac{w(y)}{x-y}\dif y
=\frac{1}{2\pi i}\int_0^\infty
\int_0^{2\pi}
\frac{w(se^{i\vartheta})}{re^{i\theta}-se^{i\vartheta}}
\dif\vartheta
s\dif s,
$$
we get
\begin{align*}
v_r
=\Re(v^*e^{i\theta})
&=\frac{\Im}{2\pi}\int_0^\infty
\int_0^{2\pi}
\frac{w(se^{i\vartheta})}{r-se^{i(\vartheta-\theta)}}\dif\vartheta
s\dif s\\
&=-\frac{1}{2\pi}\int_0^\infty \int_0^{2\pi}
\frac{\sin\vartheta}{|r-se^{i\vartheta}|^2}w(se^{i(\theta-\vartheta)})\dif\vartheta s^2\dif s.
\end{align*}
We remark that this is the expression for the real operator $v_r$ (respectively $K$). Next, we consider $v_r$ acting on complex-valued vorticities. 
Hence, for $w(x)=w_n(r)e^{in\theta}$, we have
$$
v_r
=-\frac{e^{in\theta}}{2\pi}\int_0^\infty \int_0^{2\pi}
\frac{\sin(\vartheta)e^{-in\vartheta}}{|r-se^{i\vartheta}|^2}\dif\vartheta w_n(s) s^2\dif s=ie^{in\theta}\int_0^\infty
K_n\left(\frac{r}{s}\right)
w_n(s)\dif s,
$$
where
\begin{equation}\label{eq:Kn}
K_n(\rho)
=\frac{1}{2\pi}\int_0^{2\pi}
\frac{\sin(\vartheta)\sin(n\vartheta)}{|\rho-e^{i\vartheta}|^2}\dif\vartheta
=\left\lbrace
\begin{array}{cl}
	\frac{1}{2}\rho^{-1+n}, & \rho<1,\\[0.1cm]
	\frac{1}{2}\rho^{-1-n}, & \rho>1.
\end{array}
\right.
\end{equation}
The last integral can be computed by means of the Residue Theorem (see Appendix \ref{sec:residue}).
\end{proof}

Therefore, the eigenvalue problem $Lw=\lambda w$ can be written as
\begin{equation}\label{eq:Euler:vortex:1}
\frac{\bar{v}_\theta}{r}w_n +\frac{\partial_r\bar{w}}{n}
\int_0^\infty
K_n\left(\frac{r}{s}\right)
w_n(s)\dif s
=zw_n,
\end{equation}
where $\lambda=-inz$.
Since $w_n$ must vanish wherever $\partial_r\bar{w}=0$ provided that $\Im z\neq 0$, we can write
$$
w_n
= h\partial_r\bar{w},
$$
and thus the equation \eqref{eq:Euler:vortex:1} reduces to solve
\begin{equation}\label{eq:Euler:vortex:2}
\frac{\bar{v}_\theta(r)}{r}h(r) +\frac{1}{n}
\int_0^\infty
K_n\left(\frac{r}{s}\right)
(h\partial_r\bar{w})(s)\dif s
=zh(r),
\quad\quad
r\in\mathrm{supp}(\partial_r\bar{w}),
\end{equation}
for some profile $h\neq 0$ and eigenvalue $z$ with $\Im z>0$, to be determined.

\subsection{Ansatz}

Given some parameters $0<r_1<r_2<\infty$ and $c>0$ to be determined, we consider the piecewise constant vorticity profile (see Figure \ref{fig:pc})
\begin{equation}\label{ansatz:baromega}
\bar{w}(r)
=
\left\lbrace
\begin{array}{rl}
	c, & 0<r\leq r_1, \\[0.1cm]
	-1, & r_1<r\leq r_2, \\[0.1cm]
	0, & r>r_2,
\end{array}
\right.
\end{equation}
with corresponding (angular) velocity profile \eqref{eq:BSvortex}
\begin{equation}\label{ansatz:barvphi}
\bar{v}_\theta(r)
=
\left\lbrace
\begin{array}{cl}
	\displaystyle\frac{c r}{2}, & 0<r\leq r_1, \\[0.1cm]
	\displaystyle\frac{r}{2}\left(\left(\frac{r_2}{r}\right)^2-1\right), & r_1<r\leq r_2, \\[0.1cm]
	0, & r>r_2,
\end{array}
\right.
\end{equation}
where we have chosen $c$ satisfying
\begin{equation}\label{eq:omegazeromean}
(1+c)\xi=1
\quad\text{with}\quad
\xi=\left(\frac{r_1}{r_2}\right)^2.
\end{equation}
In this case we have
$$
\partial_r\bar{w}
=-(1+c)\delta_{r_1} +\delta_{r_2}.
$$
Formally, it remains to solve the Rayleigh stability equation \eqref{eq:Euler:vortex:2} at $r=r_1,r_2$. These are two conditions for the vector $h=(h(r_1),h(r_2))$ that form the linear system
$$A
h
=zh,$$
where we split $A=D+C$ into
\begin{equation}\label{matrixA}
D=
\left[
\begin{array}{cc}
	\frac{\bar{v}_\theta(r_1)}{r_1} &  0 \\[0.1cm]
	0 & \frac{\bar{v}_\theta(r_2)}{r_2}
\end{array}
\right],
\quad\quad
C=
\left[
\begin{array}{ll}
	-\frac{1+c}{n}K_n(1) &  \frac{1}{n}K_n(\frac{r_1}{r_2}) \\[0.1cm]
	-\frac{1+c}{n}K_n(\frac{r_2}{r_1}) & \frac{1}{n}K_n(1)
\end{array}
\right].
\end{equation}
Notice that the diagonal matrix $D$ comes from the transport operator $T$, and $C$ from the compact operator $K$.

In any case, there exists an eigenvector  $h\neq 0$ for an eigenvalue $z$ if and only if $\det (A-z)=0$.  In the next lemma we compute explicitly this characteristic polynomial of $A$.

\begin{lemma}\label{lemma:detA} Let $A$ be given by \eqref{matrixA} and $r_1,r_2,c$ related to $\xi$ by \eqref{eq:omegazeromean}. Then, 
\begin{equation}\label{eq:detA}
\det (A-z)=
z^2-\frac{n-1}{n}\frac{1-\xi}{2\xi}z
+\frac{1-\xi}{4n\xi}
-\frac{1-\xi^n}{4n^2\xi}.
\end{equation}
\end{lemma}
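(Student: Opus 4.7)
This lemma is purely a direct computation: once every entry of $A=D+C$ is written out explicitly in terms of $\xi$ and $n$, the identity \eqref{eq:detA} is just trace and determinant. The plan is to first simplify each piece using the specific structure coming from the ansatz and the zero-mean condition, and then expand $\det(A-z)=z^2-(\mathrm{tr}\,A)z+\det A$.

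The key simplifications are: the zero-mean condition \eqref{eq:omegazeromean} gives $c=(1-\xi)/\xi$ and $1+c=1/\xi$; from \eqref{ansatz:barvphi} we get $\bar{v}_\theta(r_1)/r_1=c/2=(1-\xi)/(2\xi)$, whereas $\bar{v}_\theta(r_2)/r_2=0$ (this is forced by zero mean: $r\bar{v}_\theta(r)=\int_0^r s\bar{w}(s)\,ds$ vanishes at $r_2$). Hence
\begin{equation*}
D_{11}=\frac{1-\xi}{2\xi},\qquad D_{22}=0.
\end{equation*}
For $C$ I plug into \eqref{eq:Kn:0}: $K_n(1)=\tfrac12$, $K_n(r_1/r_2)=\tfrac12\xi^{(n-1)/2}$, and $K_n(r_2/r_1)=\tfrac12\xi^{(n+1)/2}$ (using $r_1/r_2=\xi^{1/2}<1$ and $r_2/r_1>1$). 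Combined with $1+c=1/\xi$, this gives
\begin{equation*}
C_{11}=-\frac{1}{2n\xi},\qquad C_{22}=\frac{1}{2n},\qquad C_{12}=\frac{\xi^{(n-1)/2}}{2n},\qquad C_{21}=-\frac{\xi^{(n-1)/2}}{2n}.
\end{equation*}

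Adding $A=D+C$, the trace is
\begin{equation*}
\mathrm{tr}\,A=\frac{1-\xi}{2\xi}-\frac{1}{2n\xi}+\frac{1}{2n}=\frac{n(1-\xi)-1+\xi}{2n\xi}=\frac{(n-1)(1-\xi)}{2n\xi}=\frac{n-1}{n}\cdot\frac{1-\xi}{2\xi},
\end{equation*}
which matches the coefficient of $z$. A pleasant feature is that the off-diagonal product $C_{12}C_{21}=-\xi^{n-1}/(4n^2)$ is negative, so
\begin{equation*}
\det A=A_{11}A_{22}-C_{12}C_{21}=\frac{1}{2n}\Bigl(\frac{1-\xi}{2\xi}-\frac{1}{2n\xi}\Bigr)+\frac{\xi^{n-1}}{4n^2}=\frac{(n-1)-n\xi+\xi^n}{4n^2\xi},
\end{equation*}
and rearranging $(n-1)-n\xi+\xi^n = n(1-\xi)-(1-\xi^n)$ gives exactly the constant term in \eqref{eq:detA}.

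There is no real obstacle here; the only bookkeeping step worth double-checking is the correct branch of $K_n$ for the arguments $r_1/r_2$ and $r_2/r_1$, and the cancellation of the $\xi^{(n+1)/2}$ factor in $C_{21}$ against $1+c=1/\xi$, which leaves the symmetric off-diagonal structure that produces the $\xi^{n-1}$ in $\det A$.
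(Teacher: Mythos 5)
Your computation is correct and follows essentially the same route as the paper: expand the quadratic $\det(A-z)=z^2-(\mathrm{tr}\,A)z+\det A$, substitute the explicit entries of $D$ and $C$ using $1+c=1/\xi$, $K_n(1)=\tfrac12$, and $K_n(r_1/r_2)K_n(r_2/r_1)=\tfrac14\xi^n$, then simplify. The only cosmetic difference is that you convert every entry to a function of $\xi$ up front, whereas the paper carries $c$ and $K_n$ symbolically a bit longer before plugging in; both yield identical algebra.
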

\begin{proof}
The determinant of $A$ is a quadratic polynomial in $z$
\begin{equation}\label{eq:detA:1}
\begin{split}
\det (A-z)
&=\left(\frac{\bar{v}_\theta(r_1)}{r_1}-\frac{1+c}{n}K_n(1)-z\right)\left(\frac{\bar{v}_\theta(r_2)}{r_2}+\frac{1}{n}K_n(1)-z\right)
+\frac{1+c}{n^2}K_n\Big(\frac{r_1}{r_2}\Big)K_n\Big(\frac{r_2}{r_1}\Big)\\
&=z^2-\left(\frac{\bar{v}_\theta(r_1)}{r_1}+\frac{\bar{v}_\theta(r_2)}{r_2}-\frac{c}{n}K_n(1)\right)z\\
&+\left(\frac{\bar{v}_\theta(r_1)}{r_1}-\frac{1+c}{n}K_n(1)\right)
\left(\frac{\bar{v}_\theta(r_2)}{r_2}+\frac{1}{n}K_n(1)\right)
+\frac{1+c}{n^2}K_n\Big(\frac{r_1}{r_2}\Big)K_n\Big(\frac{r_2}{r_1}\Big).
\end{split}
\end{equation}
By plugging (recall \eqref{eq:Kn:0} and \eqref{ansatz:barvphi})
$$
\frac{\bar{v}_\theta(r_1)}{r_1}=\frac{c}{2},
\quad\quad
\frac{\bar{v}_\theta(r_2)}{r_2}=0,
\quad\quad
K_n(1)=\frac{1}{2},
\quad\quad
K_n\Big(\frac{r_1}{r_2}\Big)K_n\Big(\frac{r_2}{r_1}\Big)
=\frac{\xi^{n}}{4},
$$
and \eqref{eq:omegazeromean} into \eqref{eq:detA:1}, we get \eqref{eq:detA}.
\end{proof}

\begin{prop}\label{prop:eigenvalue}
For any $n\geq 2$ there exists $0<\xi<1$ such that the roots $z,z^*$ of the characteristic polynomial \eqref{eq:detA} satisfy $\Im z> 0$. For $n=2$ we can take $\xi=\nicefrac{1}{2}$.
\end{prop}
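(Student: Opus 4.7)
The plan is to reduce the claim to an elementary statement about the discriminant of the quadratic \eqref{eq:detA}. The two roots $z,z^{*}$ form a genuinely complex conjugate pair (with $\Im z>0$ after choosing the correct root) if and only if this discriminant is strictly negative. Multiplying through by the positive factor $4n^{2}\xi^{2}$, this is equivalent to
\begin{equation*}
D(\xi):=(n-1)^{2}(1-\xi)^{2}-4n\,\xi(1-\xi)+4\xi(1-\xi^{n})<0.
\end{equation*}
Thus the whole proposition reduces to exhibiting, for every $n\geq 2$, some $\xi\in(0,1)$ with $D(\xi)<0$.

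For the concrete case $n=2$, $\xi=\tfrac12$ I would simply plug in:
\begin{equation*}
D(\tfrac12)=1\cdot\tfrac14-8\cdot\tfrac12\cdot\tfrac12+4\cdot\tfrac12\cdot\tfrac34=\tfrac14-2+\tfrac32=-\tfrac14<0,
\end{equation*}
which both establishes the second (explicit) assertion of the proposition and confirms the computation.

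For general $n\geq 2$ the natural idea is to analyze $D$ near the degenerate endpoint $\xi=1$, where the inner and outer jumps of $\bar w$ merge. A direct computation (expanding the polynomial $D(\xi)=(n-1)^{2}(1-\xi)^{2}-4n\xi+4n\xi^{2}+4\xi-4\xi^{n+1}$ and differentiating) gives $D(1)=0$, $D'(1)=0$, and
\begin{equation*}
D''(1)=2(n-1)^{2}+8n-4n(n+1)=-2(n^{2}-1),
\end{equation*}
which is strictly negative whenever $n\geq 2$. The Taylor expansion at $\xi=1$ therefore reads
\begin{equation*}
D(\xi)=-(n-1)(n+1)(1-\xi)^{2}+O\bigl((1-\xi)^{3}\bigr),
\end{equation*}
so $D(\xi)<0$ on a left neighborhood of $1$, and any such $\xi$ does the job.

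I expect no real obstacle; the main subtlety is just that $D$ vanishes to second order at $\xi=1$, so one has to compute two derivatives before the sign becomes visible. This matches the structural picture: at $\xi=1$ the zero-mean relation \eqref{eq:omegazeromean} collapses the vortex and the matrix $A$ degenerates, producing a double real eigenvalue; moving $\xi$ slightly below $1$ splits it into a conjugate pair, and that is exactly the instability we need.
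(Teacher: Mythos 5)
Your proof is essentially identical to the paper's: both argue that the (real-coefficient) quadratic has a conjugate pair of non-real roots iff the discriminant is negative, reduce this to showing the rescaled discriminant polynomial vanishes to second order at $\xi=1$ with a strictly negative second derivative (your $D(\xi)$ is exactly $4n^{2}p_n(\xi)$ in the paper's notation, and $D''(1)=-2(n^{2}-1)$ matches $p_n''(1)=\tfrac{1-n^2}{2n^2}$), and then check $\xi=\tfrac12$ for $n=2$ by direct evaluation. The only cosmetic difference is that you clear denominators up front and spell out the Taylor-expansion step that the paper leaves implicit.
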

\begin{proof}
The discriminant $\Delta$ of the quadratic polynomial \eqref{eq:detA} equals
$$
\Delta
=\frac{p_n(\xi)}{\xi^2}
\quad\text{with}\quad
p_n(\xi)
=\left(\frac{n-1}{n}\right)^2\frac{(1-\xi)^2}{4}
-\frac{\xi(1-\xi)}{n}
+\frac{\xi(1-\xi^n)}{n^2}.
$$
Hence, it remains to show that $p_n(\xi)<0$
for some $0<\xi<1$.
By computing the first two derivatives
\begin{align*}
p_n'(\xi)
&=-\left(\frac{n-1}{n}\right)^2\frac{(1-\xi)}{2}
-\frac{(1-\xi)}{n}
+\frac{(1-\xi^n)}{n^2}
+\frac{\xi}{n}(1-\xi^{n-1}),\\
p_n''(\xi)
&=\frac{1}{2}\left(\frac{n-1}{n}\right)^2
+\frac{2}{n}(1-\xi^{n-1})
-\frac{n-1}{n}\xi^{n-1},
\end{align*}
and noticing that
$$
p_n'(1)=0,
\quad\quad
p_n''(1)
=\frac{1-n^2}{2n^2}<0,
$$
we conclude the first statement. For $n=2$ we have
$p_2(\nicefrac{1}{2})<0$.
\end{proof}

\section{Regularization}\label{sec:regularization}

Unfortunately, the unstable vortex $\bar{\omega}$ we constructed in Section \ref{sec:vortex} is not regular enough: its eigenfunction $w=e^{in\theta}h\partial_r\bar{w}$ is a measure concentrated on $r=r_1,r_2$. In this section we show that there is a regularization  $\bar{\omega}^\varepsilon$ that is also an unstable vortex for some small $\varepsilon>0$.

We take a standard mollifier $\eta\in C_c^\infty(I)$ with $I=(-1,1)$, $\int\eta=1$, and define as usual
$$
\eta^\varepsilon(\alpha)
=\frac{1}{\varepsilon}\eta\left(\frac{\alpha}{\varepsilon}\right).
$$
By mollifying $\bar{v}_\theta$ (instead of $\bar{w}$) a few computations are simplified because we get rid of some nonlocal terms.
Since we want to regularize the singularities without modifying the vortex outside $B_\varepsilon(\{r_1,r_2\})$, 
we only mollify the indicator functions.
To this end, we write the velocity \eqref{ansatz:barvphi} as
$$
\bar{v}_\theta
=\bar{v}_{\theta,1}1_{[0,r_1)}
+\bar{v}_{\theta,2}1_{[r_1,r_2)},
$$
where we have abbreviated
$$
\bar{v}_{\theta,1}
=\frac{c r}{2},
\quad\quad
\bar{v}_{\theta,2}
=\frac{r}{2}\left(\left(\frac{r_2}{r}\right)^2-1\right).
$$
Let $0<\varepsilon<\frac{1}{3}\min\{r_1,r_2-r_1\}$, to be determined.
It is straightforward to check that the profile
\begin{equation}\label{ansatz:barv:reg}
\bar{v}_\theta^\varepsilon
=\left\lbrace
\begin{array}{cl}
	\bar{v}_{\theta,1}
	+(\bar{v}_{\theta,2}-\bar{v}_{\theta,1})
	1_{[r_1,r_2)}*\eta^\varepsilon,
	& |r-r_1|<\varepsilon,\\[0.1cm]
	\bar{v}_{\theta,2}
	1_{[r_1,r_2)}*\eta^\varepsilon,
	& |r-r_2|<\varepsilon,\\[0.1cm]
	\bar{v}_\theta, & \text{otherwise},
\end{array}
\right.
\end{equation}
is smooth (and agrees with $\bar{v}_\theta$ outside $B_\varepsilon(\{r_1,r_2\})$ by definition).
The corresponding vorticity profile $\bar{w}^\varepsilon$ is then given by \eqref{eq:BSvortex}
$$
\bar{w}^\varepsilon
=\frac{1}{r}\partial_r(r\bar{v}_\theta^\varepsilon),
$$
which is smooth and agrees with $\bar{w}$ outside $B_\varepsilon(\{r_1,r_2\})$. Therefore, since
$$
\partial_r\bar{w}^\varepsilon
=0
\quad\text{outside}\quad
B_\varepsilon(\{r_1,r_2\}),
$$
it remains to find an eigenvalue $z^\varepsilon\in\C$ with $\Im z^\varepsilon>0$ and a profile $h^\varepsilon$ satisfying the Rayleigh stability equation \eqref{eq:Euler:vortex:2}
\begin{equation}\label{eq:Euler:vortex:3}
\left(\frac{\bar{v}_\theta^\varepsilon(r)}{r}-z^\varepsilon\right)h^\varepsilon(r) +\frac{1}{n}
\int_0^\infty
K_n\left(\frac{r}{s}\right)
(h^\varepsilon\partial_r\bar{w}^\varepsilon)(s)\dif s
=0,
\quad\quad
r\in B_\varepsilon(\{r_1,r_2\}).
\end{equation}

\subsection{Rescaling}

In this section we zoom into each interval $B_\varepsilon(r_j)$ through the change of variables
$$
r=r_j+\varepsilon\alpha,
$$
for $\alpha\in I=(-1,1)$ and $j=1,2$. 
From now on we will denote
\begin{equation}\label{cj}
	c_1=-(1+c),
	\quad\quad
	c_2=1,
\end{equation}
to make the notation more compact.

\begin{lemma}[Rescaling of the velocity]\label{lemma:zoom:velocity}
It holds
\begin{align*}
\frac{\bar{v}_\theta^\varepsilon(r)}{r}
=\frac{\bar{v}_\theta(r_j)}{r_j}
+\varepsilon \frac{u_j(\alpha)}{r^2},
\end{align*}
for $r=r_j+\varepsilon\alpha$, where
$$
u_j(\alpha)
=\frac{c_j}{2}\alpha(2r_j+\varepsilon\alpha)(\sigma(\alpha)-j+1)
\quad\text{with}\quad
\sigma(\alpha)
=\int_{-1}^\alpha\eta(\beta)\dif\beta.
$$
\end{lemma}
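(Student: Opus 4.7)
The plan is to prove the identity case by case ($j=1,2$) via direct computation, using the explicit formula \eqref{ansatz:barv:reg} and the relation \eqref{eq:omegazeromean}.

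First I would compute the mollified indicator. Starting from
\[
(1_{[r_1,r_2)}*\eta^\varepsilon)(r)=\int 1_{[r_1,r_2)}(r-\varepsilon\beta)\,\eta(\beta)\dif\beta,
\]
and observing that for $r=r_j+\varepsilon\alpha$ with $|\alpha|<1$ the hypothesis $3\varepsilon<\min\{r_1,r_2-r_1\}$ makes one of the two endpoints of $[r_1,r_2)$ inactive, I would get
\[
(1_{[r_1,r_2)}*\eta^\varepsilon)(r_1+\varepsilon\alpha)=\sigma(\alpha),
\qquad
(1_{[r_1,r_2)}*\eta^\varepsilon)(r_2+\varepsilon\alpha)=1-\sigma(\alpha).
\]

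Next, I would substitute into \eqref{ansatz:barv:reg} and divide by $r$. For $j=1$, $\bar{v}_{\theta,1}(r)/r=c/2=\bar{v}_\theta(r_1)/r_1$, so
\[
\frac{\bar v_\theta^\varepsilon(r)}{r}-\frac{\bar v_\theta(r_1)}{r_1}
=\frac{\bar v_{\theta,2}(r)-\bar v_{\theta,1}(r)}{r}\,\sigma(\alpha).
\]
Using $\bar v_{\theta,2}(r)-\bar v_{\theta,1}(r)=\frac{r_2^2-(1+c)r^2}{2r}$ together with $1+c=r_2^2/r_1^2$ from \eqref{eq:omegazeromean}, this reduces to $\frac{(1+c)(r_1^2-r^2)}{2r^2}\sigma(\alpha)$, and factoring $r^2-r_1^2=\varepsilon\alpha(2r_1+\varepsilon\alpha)$ together with $c_1=-(1+c)$ yields the claimed formula. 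For $j=2$, $\bar v_\theta(r_2)/r_2=0$, so
\[
\frac{\bar v_\theta^\varepsilon(r)}{r}=\frac{\bar v_{\theta,2}(r)}{r}(1-\sigma(\alpha))=\frac{r_2^2-r^2}{2r^2}(1-\sigma(\alpha)),
\]
and factoring $r_2^2-r^2=-\varepsilon\alpha(2r_2+\varepsilon\alpha)$ with $c_2=1$ gives $\varepsilon u_2(\alpha)/r^2$ as claimed.

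This is really a one-shot algebraic verification; the only mild subtlety is bookkeeping the sign and noticing that the $\sigma$ and $1-\sigma$ that appear near $r_1$ and $r_2$ can be combined into the single expression $\sigma(\alpha)-j+1$, which is the whole reason the authors introduced the constants $c_j$ in \eqref{cj}. The main obstacle, if any, is just making sure the support condition on $\varepsilon$ is indeed enough to drop the second endpoint of $[r_1,r_2)$ in each computation, so that the convolution collapses cleanly to $\sigma$ or $1-\sigma$; this is immediate from the hypothesis $\varepsilon<\tfrac13\min\{r_1,r_2-r_1\}$.
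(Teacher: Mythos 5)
Your proof is correct and follows essentially the same route as the paper: compute the mollified indicator $1_{[r_1,r_2)}*\eta^\varepsilon$ near $r_1$ and $r_2$ (yielding $\sigma$ and $1-\sigma$ respectively, using the support restriction on $\varepsilon$), substitute into the definition \eqref{ansatz:barv:reg}, use \eqref{eq:omegazeromean} to combine terms, and factor $r^2-r_j^2=\varepsilon\alpha(2r_j+\varepsilon\alpha)$. The only cosmetic difference is that the paper records the intermediate quantities $\frac{\bar v_{\theta,2}-\bar v_{\theta,1}}{r}=-\frac{1+c}{2}\frac{r^2-r_1^2}{r^2}$ and $\frac{\bar v_{\theta,2}}{r}=-\frac{1}{2}\frac{r^2-r_2^2}{r^2}$ directly, whereas you perform the equivalent algebra in a slightly different order.
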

\begin{proof}
Firstly, notice that
$$
\frac{\bar{v}_{\theta,1}(r)}{r}=\frac{\bar{v}_{\theta}(r_1)}{r_1}
=\frac{c}{2},
\quad\quad
\frac{\bar{v}_{\theta}(r_2)}{r_2}=0.
$$
Hence, by computing (recall \eqref{eq:omegazeromean})
$$
\frac{\bar{v}_{\theta,2}-\bar{v}_{\theta,1}}{r}
=-\frac{1+c}{2}\frac{r^2-r_1^2}{r^2},
\quad\quad
\frac{\bar{v}_{\theta,2}}{r}
=-\frac{1}{2}\frac{r^2-r_2^2}{r^2},
$$
and
$$
1_{[r_1,r_2)}*\eta^\varepsilon
=\frac{1}{\varepsilon}\int_{r_1}^{r_2}\eta\left(\frac{r-s}{\varepsilon}\right)\dif s
=\left\lbrace
\begin{array}{cl}
\sigma(\alpha), &   r=r_1+\varepsilon\alpha, \\[0.1cm]
1-\sigma(\alpha), & r=r_2+\varepsilon\alpha,
\end{array}
\right.
$$
the lemma follows from the definitions \eqref{ansatz:barv:reg} and \eqref{cj}.
\end{proof}

\begin{lemma}[Rescaling of the vorticity]\label{lemma:zoom:vorticity}
It holds
$$
\varepsilon\partial_r\bar{w}^\varepsilon(r)
=
c_j(\bar{\eta}
+\varepsilon \zeta_j)(\alpha),
$$
for $r=r_j+\varepsilon\alpha$, where
$$
\bar{\eta}
=\eta+\partial_\alpha(\alpha\eta),
\quad\quad
\zeta_j
=-\frac{1}{2}
\partial_\alpha\left(\frac{\alpha^2\eta}{r_j+\varepsilon \alpha}\right).
$$
\end{lemma}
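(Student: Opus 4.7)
This lemma is a direct computation starting from the previous one. The strategy is to use the Biot--Savart identity $\bar w^\varepsilon = \tfrac{1}{r}\partial_r(r\bar v_\theta^\varepsilon)$ in rescaled variables $r=r_j+\varepsilon\alpha$, where $\partial_r = \varepsilon^{-1}\partial_\alpha$, and then reorganize the resulting expression into an $\varepsilon$-independent piece (producing $c_j\bar\eta$) and an $O(\varepsilon)$ piece that turns out to be a total $\alpha$-derivative (producing $\varepsilon c_j\zeta_j$).

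First I would multiply the formula of Lemma~\ref{lemma:zoom:velocity} by $r$ to get
\[
 r\bar v_\theta^\varepsilon(r) = r^2 \frac{\bar v_\theta(r_j)}{r_j} + \varepsilon u_j(\alpha),
\]
and differentiate once in $r$ to obtain $\bar w^\varepsilon = 2\tfrac{\bar v_\theta(r_j)}{r_j} + \tfrac{u_j'(\alpha)}{r}$. Differentiating again,
\[
 \varepsilon\,\partial_r\bar w^\varepsilon(r) = \frac{u_j''(\alpha)}{r} - \varepsilon\,\frac{u_j'(\alpha)}{r^2}.
\]
Writing $\tau_j(\alpha)=\sigma(\alpha)-(j-1)$ so that $\tau_j'=\eta$, and $A(\alpha)=\alpha(2r_j+\varepsilon\alpha)$ so that $A'(\alpha)=2r$ and $A''(\alpha)=2\varepsilon$, one has $u_j=\tfrac{c_j}{2}A\tau_j$, which gives
\[
 u_j'=\frac{c_j}{2}\bigl(2r\,\tau_j + A\eta\bigr),\qquad u_j''=\frac{c_j}{2}\bigl(2\varepsilon\tau_j+4r\eta+A\eta'\bigr).
\]

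The next step is bookkeeping. Substituting into the formula for $\varepsilon\partial_r\bar w^\varepsilon$, the two terms $\pm\tfrac{c_j\varepsilon\tau_j}{r}$ cancel and one is left with
\[
 \varepsilon\,\partial_r\bar w^\varepsilon = 2c_j\eta + \frac{c_jA\eta'}{2r} - \frac{\varepsilon c_j A\eta}{2r^2}.
\]
Using the identity $A = 2r\alpha - \varepsilon\alpha^2$ (which follows from $2r=2r_j+2\varepsilon\alpha$), the middle term splits as $c_j\alpha\eta' - \tfrac{\varepsilon c_j\alpha^2\eta'}{2r}$ and the last as $\tfrac{\varepsilon c_j\alpha\eta}{r} - \tfrac{\varepsilon^2 c_j\alpha^2\eta}{2r^2}$. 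Collecting the $\varepsilon$-independent terms gives $c_j(2\eta+\alpha\eta')=c_j\bar\eta$, and the remaining $\varepsilon$-terms are exactly
\[
 -\frac{\varepsilon c_j}{2}\left(\frac{2\alpha\eta + \alpha^2\eta'}{r} - \frac{\varepsilon\alpha^2\eta}{r^2}\right) = -\frac{\varepsilon c_j}{2}\,\partial_\alpha\!\left(\frac{\alpha^2\eta}{r_j+\varepsilon\alpha}\right) = \varepsilon c_j\,\zeta_j,
\]
where the equality is verified by a direct quotient-rule computation using $\partial_\alpha r=\varepsilon$.

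There is no real conceptual obstacle here: the whole content is algebraic manipulation. The only mildly delicate point is recognizing the total-derivative structure in $\zeta_j$, which is what motivates grouping the $\varepsilon$-corrections in the combination $-\tfrac{1}{2}\partial_\alpha(\alpha^2\eta/r)$ rather than leaving them as three separate summands. This compact form will presumably be convenient in the forthcoming fixed-point argument of Section~\ref{sec:regularization}, since integration by parts in $\alpha$ then produces cancellations against $\bar\eta$.
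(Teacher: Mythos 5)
Your proof is correct, and it is essentially the same computation as the paper's: both start from $\varepsilon\partial_r\bar w^\varepsilon = \partial_\alpha\bigl(\partial_\alpha u_j/(r_j+\varepsilon\alpha)\bigr)$ and unwind the derivatives of $u_j=\tfrac{c_j}{2}\alpha(2r_j+\varepsilon\alpha)(\sigma(\alpha)-j+1)$. The only difference is organizational: the paper first rewrites $\tfrac{\partial_\alpha u_j}{r}=c_j\bigl(\sigma-j+1+\alpha\eta-\tfrac{\varepsilon}{2}\tfrac{\alpha^2\eta}{r}\bigr)$ so that the outer $\partial_\alpha$ immediately yields $\bar\eta+\varepsilon\zeta_j$, whereas you expand the outer derivative via the quotient rule into $\tfrac{u_j''}{r}-\varepsilon\tfrac{u_j'}{r^2}$ and then recollect the $\varepsilon$-terms into a total derivative; this is a bit longer but arrives at the identical intermediate formula and the same conclusion.
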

\begin{proof}
By applying Lemma \ref{lemma:zoom:velocity},
it is straightforward to check that
$$
\varepsilon\partial_r\bar{w}^\varepsilon
=\varepsilon\partial_r\left(\frac{1}{r}\partial_r(r\bar{v}_\theta^\varepsilon)\right)
=\partial_\alpha\left(\frac{\partial_\alpha u_j}{r_j+\varepsilon\alpha}\right).
$$
Then, by computing
$$
\frac{\partial_\alpha u_j(\alpha)}{r_j+\varepsilon\alpha}
=c_j\left(\sigma(\alpha)-j+1+\alpha\eta(\alpha)-\frac{\varepsilon}{2}\frac{ \alpha^2\eta(\alpha)}{r_j+\varepsilon\alpha}\right),
$$
we get,
$$
\partial_\alpha\left(\frac{\partial_\alpha u_j}{r_j+\varepsilon\alpha}\right)
=c_j\left(\eta+\partial_\alpha(\alpha\eta)-\frac{\varepsilon}{2}\partial_\alpha\left(\frac{\alpha^2\eta}{r_j+\varepsilon \alpha}\right)\right).
$$
This concludes the proof.
\end{proof}

We define $h^\varepsilon$ in each interval $B_\varepsilon(r_j)$ as
$$
h^\varepsilon(r)
=h_j+\varepsilon g_j(\alpha),
$$
where $h_j=h(r_j)$,
for some profiles $g_j\in L^2(I)$, to be determined. Typically, we will deal with $j=1,2$
and will speak of $g=(g_1,g_2) \in L^2(I)^2$ to deal with both equations simultaneously and make the notation more compact.  
Similarly, we define
$$
z^\varepsilon
=z+\varepsilon y,
$$
for some $y\in\C$, to be determined. 
We also make the asymptotic expansion of the kernel
\begin{equation}\label{eq:Jnjk}
K_{njk}^\varepsilon
=K_n\left(\frac{r_j+\varepsilon\alpha}{r_k+\varepsilon\beta}\right)
=K_{njk}^0
+\varepsilon J_{njk}.
\end{equation}
After all this preparation, we can write the Rayleigh stability equation in the rescaled variables.
\begin{lemma}[Rescaling of the Rayleigh stability equation \eqref{eq:Euler:vortex:3}]
\label{lemma:zoom:eq}
It holds
\begin{align*}
\left(\frac{\bar{v}_\theta^\varepsilon(r)}{r}-z^\varepsilon\right)h^\varepsilon(r) +\frac{1}{n}
\int_0^\infty
K_n\left(\frac{r}{s}\right)
(h^\varepsilon\partial_r\bar{w}^\varepsilon)(s)\dif s
=\varepsilon E_j(\alpha),
\end{align*}
for $r=r_j+\alpha\varepsilon$. 
We split the error $E=(E_1,E_2)$ into
\begin{equation}\label{eq:E=0}
E=(A-z)g+(B-y)h+\varepsilon (B-y)g,
\end{equation}
where 
\begin{align*}
(Ag)_j
&=\frac{\bar{v}_{\theta}(r_j)}{r_j}g_j
+\frac{1}{n}\sum_{k=1,2}c_kK_{njk}^0\int_I
g_k\bar{\eta}\dif\beta,\\
(Bg)_j
&=
\frac{u_j}{r^2}g_j
+\frac{1}{n}\sum_{k=1,2}c_k\int_I
J_{njk}
g_k\bar{\eta}\dif\beta
+\frac{1}{n}\sum_{k=1,2}c_k\int_I
K_{njk}^\varepsilon
g_k \zeta_k\dif\beta.
\end{align*}
\end{lemma}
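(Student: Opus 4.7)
The proof is a direct substitution-and-expansion argument: plug the ansätze $h^\varepsilon(r) = h_j + \varepsilon g_j(\alpha)$, $z^\varepsilon = z + \varepsilon y$ together with the local rescalings provided by Lemmas \ref{lemma:zoom:velocity} and \ref{lemma:zoom:vorticity} into the Rayleigh stability equation \eqref{eq:Euler:vortex:3} at $r=r_j+\varepsilon\alpha$, and then collect powers of $\varepsilon$. For the local part I would write $\frac{\bar v_\theta^\varepsilon(r)}{r} - z^\varepsilon = \bigl(\frac{\bar v_\theta(r_j)}{r_j}-z\bigr) + \varepsilon\bigl(\frac{u_j}{r^2}-y\bigr)$ by Lemma \ref{lemma:zoom:velocity}, multiply by $h_j + \varepsilon g_j$, and separate into orders $\varepsilon^0$, $\varepsilon^1$, $\varepsilon^2$.

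For the nonlocal integral, I would exploit that $\mathrm{supp}(\partial_r\bar w^\varepsilon)\subset B_\varepsilon(\{r_1,r_2\})$ to split it into two pieces over $B_\varepsilon(r_k)$ and perform the change of variables $s=r_k+\varepsilon\beta$. The factor $\varepsilon$ from the Jacobian $\dif s = \varepsilon \dif\beta$ exactly cancels the $1/\varepsilon$ in Lemma \ref{lemma:zoom:vorticity}, producing $c_k(\bar\eta + \varepsilon\zeta_k)(\beta)$. Combined with the kernel expansion \eqref{eq:Jnjk}, the integral takes the compact form $\frac{1}{n}\sum_k c_k \int_I (K_{njk}^0+\varepsilon J_{njk})(h_k+\varepsilon g_k)(\bar\eta+\varepsilon\zeta_k)\dif\beta$. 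A key algebraic input is the identity $\int_I \bar\eta\,\dif\beta = 1$, which follows from $\int\eta=1$ together with $\alpha\eta$ vanishing at $\pm 1$. This is precisely what makes the action of $A$ on the constant vector $h=(h_1,h_2)$ reproduce the original finite-dimensional matrix from Section \ref{sec:vortex}.

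Combining local and integral contributions and sorting by powers of $\varepsilon$: the $\varepsilon^0$ term is $((A-z)h)_j$, which vanishes by the original eigenvalue relation \eqref{eq:A}; after dividing the whole identity by $\varepsilon$, the surviving $\varepsilon^0$-part of $E_j$ assembles into $((A-z)g)_j + ((B-y)h)_j$, and the remainder is $\varepsilon((B-y)g)_j$, giving \eqref{eq:E=0}. The one non-obvious bookkeeping step is that the mixed terms $\varepsilon^2 K^0_{njk} g\zeta$ and $\varepsilon^3 J_{njk} g\zeta$ produced by expanding $(K^0+\varepsilon J)(h+\varepsilon g)(\bar\eta+\varepsilon \zeta)$ naturally combine into $\varepsilon^2 K^\varepsilon_{njk} g\zeta$; this is precisely the reason why the last term defining $(Bg)_j$ is written with $K^\varepsilon$ rather than $K^0$.

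The argument is fundamentally routine: the main care is to track $\varepsilon$-orders correctly and to recognize the $K^\varepsilon$-absorption that yields the clean decomposition \eqref{eq:E=0}. No genuine analytical obstacle arises, which is consistent with the philosophy of the paper—the serious work is postponed to the fixed-point argument in $L^2(I)^2 \times \mathbb{C}$ that solves the rewritten equation $(g,y) = H + \varepsilon R$, for which this lemma provides the indispensable algebraic setup.
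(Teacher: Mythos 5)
Your proposal is correct and follows essentially the same route as the paper: substitute the rescalings from Lemmas \ref{lemma:zoom:velocity} and \ref{lemma:zoom:vorticity}, change variables $s=r_k+\varepsilon\beta$ in the nonlocal term (the Jacobian cancelling the $1/\varepsilon$), use $\int_I\bar\eta=1$ and the eigenvalue relation $Ah=zh$ to kill the zero-order term, and sort by powers of $\varepsilon$. Your observation that the higher-order pieces recombine into $K^\varepsilon$-weighted integrals is exactly the bookkeeping the paper performs implicitly in its expansion \eqref{zoom:eq:2}, where $K_n^\varepsilon$ is deliberately left unexpanded in the $\zeta$-terms.
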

\begin{proof}
On the one hand, by Lemma \ref{lemma:zoom:velocity},
\begin{equation}\label{zoom:eq:1}
\begin{split}
\left(\frac{\bar{v}_{\theta}^\varepsilon}{r}-z^\varepsilon\right)h^\varepsilon
&=\left(\frac{\bar{v}_{\theta}(r_j)}{r_j}-z\right)h_j
+\varepsilon\left(\frac{u_j}{r^2}-y\right)h_j
+\varepsilon\left(\frac{\bar{v}_{\theta}(r_j)}{r_j}-z\right)g_j
+\varepsilon^2 \left(\frac{u_j}{r^2}-y\right)g_j.
\end{split}
\end{equation}
On the other hand, by Lemma \ref{lemma:zoom:vorticity},
\begin{equation}\label{zoom:eq:2}
\begin{split}
\frac{1}{n}\int_0^\infty
K_n\left(\frac{r}{s}\right)
(h^\varepsilon\partial_r\bar{w}^\varepsilon)(s)\dif s
&=\frac{1}{n}\sum_{k=1,2} c_kh_k
K_n\left(\frac{r_j}{r_k}\right)\\
&+\frac{\varepsilon}{n}\sum_{k=1,2} c_kh_k\int_I
\frac{1}{\varepsilon}\left[K_n\left(\frac{r_j+\varepsilon\alpha}{r_k+\varepsilon\beta}\right)
-K_n\left(\frac{r_j}{r_k}\right)\right]
\bar{\eta}(\beta)\dif\beta\\
&+\frac{\varepsilon}{n}\sum_{k=1,2} c_kh_k\int_I
K_n\left(\frac{r_j+\varepsilon\alpha}{r_k+\varepsilon\beta}\right)
\zeta_k(\beta)\dif\beta\\
&+\frac{\varepsilon}{n}\sum_{k=1,2}c_k\int_I
K_n\left(\frac{r_j+\varepsilon\alpha}{r_k+\varepsilon\beta}\right)
(g_k\bar{\eta})(\beta)\dif\beta\\
&+\frac{\varepsilon^2}{n}\sum_{k=1,2}c_k\int_I
K_n\left(\frac{r_j+\varepsilon\alpha}{r_k+\varepsilon\beta}\right)
(g_k \zeta_k)(\beta)\dif\beta,
\end{split}
\end{equation}
where we have applied
$$
\int\bar{\eta}
=\int\eta=1.
$$
Finally, notice that the first terms in \eqref{zoom:eq:1} and \eqref{zoom:eq:2} cancel out because $Ah=zh$ by Section \ref{sec:vortex}.
\end{proof}

\subsection{Inverting the linear operator}

Notice that we can split $A=D+C$ into
$$
(Dg)_j=\frac{\bar{v}_\theta(r_j)}{r_j}g_j,
\quad\quad
(Cg)_j=\frac{1}{n}\sum_{k=1,2}c_kK_{njk}^0
\int_I g_k\bar{\eta}\dif\beta.
$$
Observe that when $g$ is constant we get back the matrix $A$ from \eqref{matrixA}. We omitted this detail in the introduction for the sake of simplicity. Notice that $D$ is a real-valued diagonal operator, and therefore $(D-z)$ is invertible, and that $C$ is constant-valued, that is, $C:L^2(I)^2\to\C^2$. The next lemma takes advantage of this structure to reformulate the Rayleigh stability equation $\eqref{eq:E=0}=0$ in order to find the corresponding positive eigenvalue.

\begin{lemma}\label{lem:lemaf}
The Rayleigh stability equation
$$
(A-z)g=(y-B)h+\varepsilon (y-B)g,
$$
is equivalent to
\begin{equation}\label{eq:gy}
\begin{split}
g&=f
+\gamma h^* + \delta h,\\
y&=\frac{\langle Cf,h^{*\perp}\rangle}{\langle h,h^{*\perp}\rangle},
\end{split}
\end{equation}
where $f=f(g,y)$
\begin{equation}\label{eq:f}
f=(D-z)^{-1}(-Bh+\varepsilon (y-B)g),
\end{equation}
and $\gamma,\delta\in\C$, with $\gamma=\gamma(f)$
$$
\gamma
=-\frac{1}{2iz_2|h|^2}\left(\frac{\langle h,h^*\rangle}{\langle h,h^{*\perp}\rangle}\langle Cf,h^{*\perp}\rangle
-\langle Cf,h^*\rangle\right).
$$
Since $\delta$ is arbitrary, we will take $\delta=0$ for simplicity. 
\end{lemma}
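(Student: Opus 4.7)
The approach is to exploit two structural features of the equation. First, $A-z=(D-z)+C$ where $D-z$ is a diagonal multiplication operator on $L^2(I)^2$ with componentwise multipliers $\bar{v}_\theta(r_j)/r_j-z$; these are nonzero because $\Im z>0$, so $(D-z)$ is invertible and its inverse preserves the subspace $\mathbb{C}^2$ of functions constant in $\alpha$. Second, the range of $C$ already lies in $\mathbb{C}^2$, because $C$ sees $g$ only through the scalar integrals $\int_I g_k\bar{\eta}\,d\beta$, so effectively $C$ is the matrix from \eqref{matrixA}. The infinite-dimensional problem thus reduces to linear algebra on $\mathbb{C}^2$, where $\mathrm{Ker}(A-z)=\mathrm{span}(h)$ and $\mathrm{Im}(A-z)=\mathrm{span}(h^*)$ (with $h^*$ the eigenvector for $z^*$, linearly independent from $h$ since $z\neq z^*$).

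For the forward direction, suppose $(g,y)$ solves the Rayleigh stability equation and define $f$ by \eqref{eq:f}. Using the equation to replace $(D-z)g=-Cg+(y-B)h+\varepsilon(y-B)g$ together with $(D-z)f=-Bh+\varepsilon(y-B)g$, I obtain
\[
(D-z)(g-f)=yh-Cg,
\]
which is constant in $\alpha$. Applying $(D-z)^{-1}$ shows $\psi:=g-f$ lies in $\mathbb{C}^2$. Substituting $g=f+\psi$ back and using $(A-z)f=-Bh+\varepsilon(y-B)g+Cf$ reduces the Rayleigh equation to the finite-dimensional identity
\[
(A-z)\psi=yh-Cf.
\]
The solvability condition $yh-Cf\in\mathrm{span}(h^*)$, enforced by pairing against a nonzero vector $h^{*\perp}\in\mathbb{C}^2$ orthogonal to $h^*$, produces the displayed formula for $y$; the denominator $\langle h,h^{*\perp}\rangle$ is nonzero precisely because $h$ and $h^*$ are linearly independent. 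Writing then $\psi=\gamma h^*+\delta h$ and applying $(A-z)$ yields $\gamma(z^*-z)h^*=yh-Cf$. Pairing with $h^*$, using $|h^*|^2=|h|^2$ and $z^*-z=-2iz_2$ with $z_2=\Im z>0$, and substituting the formula for $y$, delivers the stated expression for $\gamma$; $\delta$ is unconstrained and is set to $0$ by convention.

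The backward direction is a direct verification: from $(A-z)h=0$, $(A-z)h^*=(z^*-z)h^*$, and the defining relation for $f$, I obtain
\[
(A-z)g=-Bh+\varepsilon(y-B)g+Cf+\gamma(z^*-z)h^*,
\]
and the prescribed choices of $y$ and $\gamma$ are precisely those that make $Cf+\gamma(z^*-z)h^*=yh$, recovering the original equation. No serious obstacle is anticipated: the entire lemma amounts to the Fredholm alternative on $\mathbb{C}^2$ made explicit, with the rank structure of $C$ collapsing the passage from $L^2(I)^2$ back to a two-dimensional setting, dressed up in the notation needed for the subsequent fixed-point argument in $L^2(I)^2\times\mathbb{C}$.
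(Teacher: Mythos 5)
Your proposal is correct and follows essentially the same route as the paper: the change of variables $g=f+\mu$, the observation that $\mu$ lies in $\mathbb{C}^2$, the decomposition $\mu=\gamma h^*+\delta h$, and the determination of $y$ and $\gamma$ via pairing against $h^{*\perp}$ and $h^*$ respectively. The only distinction is that you spell out why $\mu=g-f$ is forced to be a constant vector (namely that $D-z$ is multiplication by constants and $C$ has range in $\mathbb{C}^2$), which the paper states more tersely with ``it turns out,'' and you make the backward verification explicit; both are welcome elaborations rather than a different argument.
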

\begin{proof}
We start by making the change of variables
$$
g=f
+\mu,
$$
in terms of some $\mu$, to be determined. It turns out that $\mu$ must be a constant vector satisfying
\begin{equation}\label{eq:mu}
(A-z)\mu
=yh-Cf.
\end{equation}
Notice that $A$ in \eqref{eq:mu} is acting on constant vectors, and it is therefore given by \eqref{matrixA}.
Since $\mathrm{Ker}(A-z)=\mathrm{span}(h)$, $\mathrm{Im}(A-z)=\mathrm{span}(h^*)$, and $\{h,h^*\}$ is a basis of $\C^2$,
we take
$$
\mu=\gamma h^*+\delta h,
$$
in terms of some $\gamma,\delta\in\C$, to be determined. Notice that
\begin{equation}\label{eq:Amenoszrange}
(A-z)\mu
=\gamma(A-z)h^*
=\gamma(A-(z^*+2iz_2))h^*
=-2i z_2\gamma h^*.
\end{equation}
Hence, 
multiplying \eqref{eq:mu} by $h^{*\perp}=(-h_2^*,h_1^*)$, we get the compatibility condition for $y$
$$
0
=y\langle h,h^{*\perp}\rangle
-\langle Cf,h^{*\perp}\rangle.
$$
Notice that $\langle h,h^{*\perp}\rangle\neq 0$ since $\{h,h^*\}$ is a basis of $\C^2$. On the other hand, multiplying \eqref{eq:mu} by $h^*$, and using \eqref{eq:Amenoszrange},
we can determine $\gamma$ from the following equality
$$
-2iz_2\gamma
|h|^2
=y\langle h,h^*\rangle
-\langle Cf,h^*\rangle.
$$
The proof of the lemma is concluded. 
\end{proof}

\subsection{Fixed point argument}

For $\varepsilon=0$ the equation \eqref{eq:gy} for $(g,y)$ is explicit and thus we can find a solution $(g^0,y^0)$. For small $\varepsilon>0$ we will apply a fixed point argument. 
It can be shown by a bootstrapping argument that the eigenfunction $g^\varepsilon$ is indeed smooth. However, since this additional information is not necessary at this point, we omit the proof.

\begin{prop} 
For every $M>\|(g^0,y^0)\|_{L^2(I)^2\times\C}$ there exists $\varepsilon_0>0$ satisfying that: for any $0\leq\varepsilon\leq\varepsilon_0$ there exists $(g^\varepsilon,y^\varepsilon)\in L^2(I)^2\times\C$ solving \eqref{eq:gy} with 
$\|(g^\varepsilon,y^\varepsilon)\|_{L^2(I)^2\times\C}\leq M$. In particular, $\Im z^\varepsilon>0$ for $\varepsilon<\frac{\Im z}{M}$.
\end{prop}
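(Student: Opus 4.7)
The plan is to set up \eqref{eq:gy} as a fixed point equation $(g,y)=\Phi_\varepsilon(g,y)$ on $L^2(I)^2\times\C$, where
$$
\Phi_\varepsilon(g,y)
=\bigl(f(g,y)+\gamma(f(g,y))\,h^*,\;\langle Cf(g,y),h^{*\perp}\rangle/\langle h,h^{*\perp}\rangle\bigr),
$$
with $f(g,y)=(D-z)^{-1}(-Bh+\varepsilon(y-B)g)$ as in \eqref{eq:f}. When $\varepsilon=0$, $f$ is independent of $(g,y)$, so $\Phi_0$ is the constant map whose value is exactly $(g^0,y^0)$. For small $\varepsilon>0$, $\Phi_\varepsilon$ is an $O(\varepsilon)$-perturbation of this constant map, and I would apply Banach's fixed point theorem on the closed ball $B_M\subset L^2(I)^2\times\C$.

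Before running the fixed point, I would record uniform operator bounds. First, since $d_j:=\bar v_\theta(r_j)/r_j\in\R$ and $\Im z>0$, the operator $(D-z)^{-1}$ acts on $L^2(I)^2$ as multiplication by $1/(d_j-z)$, hence $\|(D-z)^{-1}\|\le 1/\Im z$. Second, $C:L^2(I)^2\to\C^2$ is bounded: each entry is an integral of $g_k$ against the bounded function $K_{njk}^0\bar\eta$. Third, and most delicately, I must bound $B:L^2(I)^2\to L^2(I)^2$ uniformly for $0<\varepsilon\le\varepsilon_0$. The coefficient $u_j/r^2$ is a smooth bounded function on $I$, $\zeta_k$ is smooth and compactly supported (uniformly in $\varepsilon$), and $K_{njk}^\varepsilon$ is continuous in $(\alpha,\beta)\in I^2$. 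The only delicate term is $J_{njk}$, which when $j\ne k$ is smooth (the argument of $K_n$ stays away from $1$), and when $j=k$ must be controlled using that $K_n$ is Lipschitz across $\rho=1$ even though $K_n'$ has a jump: this yields $|J_{njj}|\le C_n$ uniformly. From this, $\|B\|\le K_1$ for some $K_1$ independent of $\varepsilon\in[0,\varepsilon_0]$; symmetrically one gets $|\gamma(f)|\le K_2\|f\|$ since $2iz_2|h|^2\ne 0$ and $\langle h,h^{*\perp}\rangle\ne 0$ (as $\{h,h^*\}$ is a basis of $\C^2$).

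With these bounds in hand, the self-map and contraction properties are routine. For $(g,y)\in B_M$,
$$
\|f(g,y)-f^0\|_{L^2}
\le \tfrac{\varepsilon}{\Im z}\bigl(M\cdot M+K_1 M\bigr),
$$
and then $\|\Phi_\varepsilon(g,y)-(g^0,y^0)\|\le C\varepsilon(M^2+M)$ for a constant $C$ depending only on $K_1,K_2,\Im z,|h|$ and $\langle h,h^{*\perp}\rangle$. Choosing $\varepsilon_0$ so small that $C\varepsilon_0(M^2+M)\le M-\|(g^0,y^0)\|$ ensures $\Phi_\varepsilon(B_M)\subset B_M$. An identical computation on differences, using the bilinear form of the $\varepsilon$-term in $f$, yields
$$
\|\Phi_\varepsilon(g_1,y_1)-\Phi_\varepsilon(g_2,y_2)\|
\le C'\varepsilon(1+M)\,\|(g_1,y_1)-(g_2,y_2)\|,
$$
which is a strict contraction after shrinking $\varepsilon_0$ further. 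Banach's theorem then provides a unique fixed point $(g^\varepsilon,y^\varepsilon)\in B_M$, which by construction solves \eqref{eq:gy}.

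Finally, since $|y^\varepsilon|\le M$, the relation $z^\varepsilon=z+\varepsilon y^\varepsilon$ gives $\Im z^\varepsilon\ge \Im z-\varepsilon M>0$ whenever $\varepsilon<\Im z/M$, proving the last assertion. The only real obstacle here is the uniform-in-$\varepsilon$ control of $J_{njk}$ in the diagonal case $j=k$, where the argument of $K_n$ crosses the singular point $\rho=1$; once Lipschitz continuity of $K_n$ across $1$ is invoked, everything else is standard perturbation theory.
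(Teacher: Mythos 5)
Your proof is correct and follows essentially the same route as the paper: set up \eqref{eq:gy} as a fixed point map, establish uniform-in-$\varepsilon$ boundedness of $B$ and $C$ on $L^2(I)^2$, then run a self-map plus contraction argument on the ball $B_M$. Your extra remark identifying $J_{njj}$ as the delicate term and invoking the (Lipschitz-but-not-$C^1$) behaviour of $K_n$ across $\rho=1$ is a good clarification; the paper compresses this into the single uniform estimate $\|J_{njk}\|_{L^\infty}\le \tfrac{1}{2}(n+1)\tfrac{r_j+r_k}{(r_k-\varepsilon)^2}$, which implicitly uses exactly that one-sided-Lipschitz bound. Likewise, your explicit observation $\|(D-z)^{-1}\|\le 1/\Im z$ (since $\bar v_\theta(r_j)/r_j\in\R$) is the reason the paper's $\|(D-z)^{-1}\|_{L^\infty}$ is finite. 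No gaps.
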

\begin{proof}
First we check that the linear operators $B$ and $C$
are bounded in $L^2(I)^2$. The operator $C$ can be easily bounded by
$$
\|Cg\|_{L^2}
\leq\frac{|I|^{\nicefrac{1}{2}}}{n}
\sum_{j=1,2}\sum_{k=1,2}|c_k||K_{njk}^0|
\|g_k\|_{L^2}\|\bar{\eta}\|_{L^2}.
$$
Similarly, the operator $B$ can be bounded by
$$
\|Bg\|_{L^2}
\leq
\sum_{j=1,2}\left(
\|\frac{u_j}{r^2}\|_{L^\infty}\|g_j\|_{L^2}
+\frac{|I|^{\nicefrac{1}{2}}}{n}\sum_{k=1,2}|c_k|
\left(
\|J_{njk}\|_{L^\infty}
\|\bar{\eta}\|_{L^\infty}
+\|K_{njk}^\varepsilon\|_{L^\infty}
\|\zeta_k\|_{L^\infty}
\right)\|g_k\|_{L^2}\right).
$$
From the expression \eqref{eq:Kn:0} and \eqref{eq:Jnjk} it follows that
$$
\|K_{njk}^\varepsilon\|_{L^\infty}
\leq\frac{1}{2},
\quad\quad
\|J_{njk}\|_{L^\infty}
\leq\frac{1}{2}(n+1)
\frac{r_j+r_k}{(r_k-\varepsilon)^2}.$$
Let us denote by $F$ the map on the right hand side of \eqref{eq:gy}, that is, we rewrite this equation compactly as
$$
(g,y)=F(g,y).
$$
On the one hand, since all the operators involved in $F$ are bounded and $\|(g^0,y^0)\|_{L^2(I)^2\times\C}<M$, there exists $\varepsilon_0>0$ such that $F$ maps the ball $B_M$ of $L^2(I)^2\times\C$ into itself.
On the other hand, given a pair $(g,y)$ and $(\bar{g},\bar{y})$ in $B_M$,  the  corresponding pair
$f$ and $\bar{f}$ given by \eqref{eq:f} satisfy
\begin{equation}\label{eq:contr}
f-\bar{f}
=\varepsilon(D-z)^{-1}((y-B)g-(\bar{y}-B)\bar{g}),
\end{equation}
and thus
$$
\|f-\bar{f}\|_{L^2}
\leq\varepsilon
\|(D-z)^{-1}\|_{L^\infty}
\left(|y-\bar{y}|\|g\|_{L^2(I)^2}
+|\bar{y}|\|g-\bar{g}\|_{L^2}
+\|B(g-\bar{g})\|_{L^2(I)^2}\right).
$$
The boundedness of the operators $B$ and $C$ and the constant $M$ in the statement of the proposition imply that we can choose $\varepsilon_0>0$ such that $F$ becomes a contraction on $B_M$. Then, we can apply the classical Banach fixed point theorem to find our required solution. 
\end{proof}

\section{Self-similar instability}\label{sec:selfsimilar}

In this section we prove Theorem \ref{thm:L}, and provide growth bounds of the semigroup generated by $L_b$.
We start by decomposing
$$
L_b
=
ab
+T_b+K,
$$
where $T_b$ is the transport operator
$$
T_b\Omega
=-\bar{V}_b\cdot\nabla\Omega
\quad\text{with}\quad
\bar{V}_b
=\bar{V}-b X,
$$
and $K$ is the compact operator
$$
K\Omega=-V\cdot\nabla\bar{\Omega}
\quad\text{with}\quad
V=\nabla^\perp\Delta^{-1}\Omega.
$$

\begin{Rem}
As these operators arise from the self-similar Euler equation, we will keep the convention of using uppercase letters for maintaining coherence in notation. It is important to observe that the results applicable to $b=0$ correspond to the (original) Euler equation.
\end{Rem}

From now on we consider $n\geq 2$ fixed.
We define these operators on the subspace $L_n^2$ of vorticities $\Omega\in L^2$ which has zero mean and are $n$-fold symmetric 
\begin{equation}\label{eq:nfold}
\Omega(X)=\Omega(e^{\frac{2\pi i}{n}}X).
\end{equation} 
By writing the Fourier expansion of $\Omega$ in polar coordinates $X=Re^{i\theta}$,
$$
\Omega(X)=\sum_{k\in\Z}\Omega_k(R)e^{ik\theta},
\quad\quad
\Omega_k(R)
=\frac{1}{2\pi}\int_0^{2\pi}\Omega(Re^{i\theta})e^{-ik\theta}\dif\theta,
$$
it follows that
the $n$-fold symmetry \eqref{eq:nfold} is equivalent to the vanishing of the indices $k$ that are not multiples of $n$, and the zero mean condition to
\begin{equation}\label{eq:U0}
\int_0^\infty \Omega_0(R)R\dif R=0.
\end{equation}
Therefore, we can decompose $L_n^2$ into the orthogonal direct sum
\begin{equation}\label{eq:directsum}
L_n^2=\bigoplus_{j\in\Z}U_{jn},
\end{equation}
of the invariant subspaces given in \eqref{eq:Un}, while $U_0$ is given by the condition \eqref{eq:U0}.
More precisely, by the Plancherel identity
\begin{equation}\label{eq:L2n}
\|\Omega\|_{L^2}^2
=2\pi
\sum_{j\in\Z}
\int_0^\infty |\Omega_{jn}(R)|^2R\dif R,
\end{equation}
we consider sums of elements $\Omega_{jn}$ for which \eqref{eq:L2n} is finite.
We remark that the operators under consideration are closed and densely defined. In fact, the domain of $K$ is $D(K)=L_n^2$, and the domain of $L_b$ and $T_b$ equals
$$
D(L_b)=D(T_b)
=\{\Omega\in L_n^2\,:\,\mathrm{div}(\bar{V}_b\Omega)\in L_n^2\}.
$$

\begin{Rem}
The reason for considering the direct sum \eqref{eq:directsum} is that the nonlinear term in the Euler equation lacks invariance in $U_n$, but it is invariant in the entire space $L_n^2$ (see Proposition \ref{propABCDGMKpp}). 
Specifically, in Section \ref{sec:nonlinear} we will need the growth bound of the semigroup generated by $L_b$ acting on the full space $L_n^2$. 
\end{Rem}

Before embarking on the proof of Theorem \ref{thm:Lb}, we recall some classical results in Operator theory. Then, we will analyze $T_b$ and $K$ separately, and later $L_b$. After that, we will compute the growth bound of the semigroup generated by $L_b$, and provide regularity properties of the eigenfunction.

\subsection{Preliminaries}

In this section we recall some classical results in Operator theory that will be helpful during the analysis. In general, we will consider a linear operator $A:D(A)\subset H\to H$ acting on some Hilbert space $H$, where $D(A)$ is the domain of $A$. For a fixed $H$, we will denote $\mathcal{L}$ and $\mathcal{K}$ by the space of bounded and compact operators on $H$ respectively. In the next sections we will consider $H=L_n^2$.

\subsubsection{Spectral theory}
The \textit{spectrum} of $A$ is defined as  
$$
\sigma(A)=\{\lambda\in\C\,:\, (A-\lambda)\textit{ is not invertible}\}.
$$
Let us suppose that $A$ is a  bounded operator. 
Then, $A$ is called a  \textit{Fredholm Operator} if both the kernel $\text{Ker}(A)$ and the cokernel $H/\text{Im}(A)$ are finite dimensional. In this case, the \textit{index} of $A$ is the integer
$$
\text{Ind}(A)
=\dim(\text{Ker}(A))
-\dim(H/\text{Im}(A)).
$$
We recall a classical result in Spectral theory: the stability of the index of Fredholm operators with respect to compact perturbations
(see e.g.~\cite[Theorem 5.26, Chapter IV]{Kato95}).

\begin{prop}\label{prop:StabilityFredholm}
	Let $A\in\mathcal{L}$ be a Fredholm operator and $K\in\mathcal{K}$. Then, $A+K\in\mathcal{L}$ is a Fredholm operator with $\text{Ind}(A+K)=\text{Ind}(A)$.
\end{prop}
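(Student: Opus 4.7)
My approach splits the proof into two parts: first showing that $A+K$ is Fredholm, and then showing that the index is preserved, the latter via a homotopy argument that relies on local constancy of the index on the space of Fredholm operators.

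For the first part, I would invoke Atkinson's characterization: a bounded operator $T \in \mathcal{L}$ is Fredholm if and only if it admits a parametrix, i.e., there exists $B \in \mathcal{L}$ with $BT - I, TB - I \in \mathcal{K}$. Since $A$ is Fredholm by hypothesis, fix such a parametrix $B$ for $A$. Then
\[
B(A+K) - I = (BA - I) + BK, \qquad (A+K)B - I = (AB - I) + KB
\]
both lie in $\mathcal{K}$, because $\mathcal{K}$ is a two-sided ideal in $\mathcal{L}$. Hence $B$ is also a parametrix for $A+K$, and Atkinson's theorem yields that $A+K \in \mathcal{L}$ is Fredholm.

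For the second part, consider the affine path $\gamma : [0,1] \to \mathcal{L}$ given by $\gamma(t) = A + tK$. Applying the first part to the compact operator $tK$ shows $\gamma$ takes values in the set $\mathcal{F}(H) \subset \mathcal{L}$ of Fredholm operators. Granting that the index map $\mathrm{Ind} : \mathcal{F}(H) \to \Z$ is continuous (hence locally constant, since $\Z$ is discrete), the connectedness of $[0,1]$ and continuity of $\gamma$ give $\mathrm{Ind}(A+K) = \mathrm{Ind}(\gamma(1)) = \mathrm{Ind}(\gamma(0)) = \mathrm{Ind}(A)$.

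The main obstacle is precisely the local constancy of $\mathrm{Ind}$. I would establish it by fixing $T_0 \in \mathcal{F}(H)$ and decomposing $H = N_0 \oplus N_0^\perp$ on the domain and $H = R_0 \oplus R_0^\perp$ on the target, where $N_0 = \mathrm{Ker}(T_0)$ and $R_0 = \mathrm{Im}(T_0)$ (closed, since $T_0$ is Fredholm). Relative to these decompositions $T_0$ restricts to a Banach-space isomorphism $N_0^\perp \to R_0$; for $\|T - T_0\|$ sufficiently small, a Neumann series argument shows that the corresponding block of $T$ is still an isomorphism. A standard block-matrix computation then shows that $\mathrm{Ker}(T)$ injects into $N_0$ and that the cokernel of $T$ injects into $R_0^\perp$ with equal drop in dimension, yielding $\mathrm{Ind}(T) = \mathrm{Ind}(T_0)$. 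Combined with the parametrix step above, this closes the argument.
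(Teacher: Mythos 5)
The paper does not prove Proposition~\ref{prop:StabilityFredholm}; it simply cites it as a classical fact from Kato's book (Theorem~5.26, Chapter~IV). There is therefore no in-paper argument to compare against. That said, your proof is correct and is the standard one: Atkinson's parametrix characterization together with the two-sided ideal property of $\mathcal{K}$ gives Fredholmness of $A+K$ (indeed with the \emph{same} parametrix $B$), and local constancy of $\mathrm{Ind}$ on $\mathcal{F}(H)$ combined with the connected path $t\mapsto A+tK$ gives equality of indices. The only step you sketch rather than carry out in full is the local constancy itself; the ``equal drop in dimension'' claim is most cleanly justified by observing that after a Schur-complement reduction the index of $T$ equals the index of a finite-rank block $N_0\to R_0^\perp$, which is $\dim N_0-\dim R_0^\perp=\mathrm{Ind}(T_0)$ regardless of that block. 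That detail aside, the argument is complete and matches the textbook treatment the authors are pointing to.
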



\subsubsection{Semigroup theory}
We recall several classical results in Semigroup theory. The first one gives a characterization of strongly continuous semigroups (see e.g.~\cite[Corollary 3.6, Chapter II]{EngellNagel00}).
\begin{prop}\label{prop:semigroupTFAE} 
Given $w\in\R$ and a linear operator $A$, the following are equivalent:
\begin{enumerate}[(i)]
    \item $A$ generates a strongly continuous semigroup satisfying $\|e^{sA}\|_{\mathcal{L}}\leq e^{ws}$ for all $s\geq 0$.
    \item $A$ is closed, densely defined, and for any $\lambda\in\C$ with $\Re\lambda>w$, it holds
    $$
    \|(A-\lambda)^{-1}\|_{\mathcal{L}}\leq\frac{1}{\Re\lambda - w}.
    $$
\end{enumerate}
\end{prop}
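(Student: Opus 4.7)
The plan is to recognize this as the Hille-Yosida generation theorem (in the form that yields a quasi-contractive semigroup with growth bound $w$), and to prove the two implications separately. A useful preliminary reduction is to set $\tilde A = A - w$ so that the resolvent estimate becomes $\|(\tilde A - \mu)^{-1}\|_{\mathcal{L}} \leq (\Re\mu)^{-1}$ for $\Re\mu > 0$, and the semigroup bound becomes $\|e^{s\tilde A}\|_{\mathcal{L}} \leq 1$. This lets me work with an honest contraction semigroup and then undo the shift at the end via $e^{sA} = e^{ws} e^{s\tilde A}$.

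For the direction (i) $\Rightarrow$ (ii), I would start from the Laplace-transform representation of the resolvent: if $A$ generates a strongly continuous semigroup with $\|e^{sA}\|_{\mathcal{L}} \leq e^{ws}$, then for $\Re\lambda > w$ the integral
\begin{equation*}
R(\lambda) \Omega = -\int_0^\infty e^{-\lambda s} e^{sA}\Omega \, \mathrm{d}s
\end{equation*}
converges absolutely for every $\Omega \in H$, and a direct computation (using the semigroup property and differentiation under the integral) shows that $R(\lambda) = (A-\lambda)^{-1}$. Taking norms gives
\begin{equation*}
\|(A-\lambda)^{-1}\|_{\mathcal{L}} \leq \int_0^\infty e^{-(\Re\lambda - w)s} \, \mathrm{d}s = \frac{1}{\Re\lambda - w}.
\end{equation*}
That $A$ is closed and densely defined is standard: density of $D(A)$ follows because $\frac{1}{s}\int_0^s e^{\tau A}\Omega \, \mathrm{d}\tau \in D(A)$ and converges to $\Omega$ as $s \to 0^+$, while closedness follows from passing to the limit in $\frac{1}{s}(e^{sA}-1)\Omega \to A\Omega$.

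The harder direction (ii) $\Rightarrow$ (i) proceeds through the Yosida regularization. After shifting to the $w = 0$ case, for each $\lambda > 0$ I would set
\begin{equation*}
A_\lambda = \lambda A (\lambda - A)^{-1} = -\lambda^2 (A - \lambda)^{-1} - \lambda,
\end{equation*}
which is bounded by hypothesis, commutes with $(A-\mu)^{-1}$ for all admissible $\mu$, and satisfies $A_\lambda \Omega \to A\Omega$ for $\Omega \in D(A)$. The exponentials $e^{sA_\lambda} = e^{-\lambda s}\sum_{k\geq 0}\frac{(\lambda^2 s)^k}{k!}(\lambda-A)^{-k}$ are well-defined, and the resolvent bound $\|(\lambda-A)^{-k}\|_{\mathcal{L}} \leq \lambda^{-k}$ forces $\|e^{sA_\lambda}\|_{\mathcal{L}} \leq 1$. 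A standard Cauchy-sequence estimate using commutativity of $A_\lambda$ and $A_\mu$ shows that $e^{sA_\lambda}\Omega$ is Cauchy as $\lambda\to\infty$, uniformly on compact time intervals, first for $\Omega \in D(A)$ and then (by density and the uniform bound) on all of $H$. The limit $e^{sA}\Omega$ inherits the contraction bound and the strong continuity, and a further passage to the limit in $\frac{d}{ds} e^{sA_\lambda}\Omega = A_\lambda e^{sA_\lambda}\Omega$ identifies its generator as $A$.

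The main obstacle is the convergence step in the Yosida construction, which has to be carried out carefully because $A$ itself is unbounded: the commutativity of the Yosida approximants is what converts the formal manipulation into a genuine Cauchy estimate, and only then does the contraction bound (or its shifted counterpart $e^{ws}$) transfer from the approximants to the limiting semigroup. Once this is in place, undoing the $w$-shift at the very end recovers the statement as written. Since the result is a direct quotation from \cite{EngellNagel00}, I would simply cite the theorem there rather than reproduce the full construction in the paper.
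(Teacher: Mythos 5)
Your proposal is correct, and it matches the paper's approach: the paper does not prove this proposition but cites it directly as a classical result (Corollary 3.6, Chapter II of Engel--Nagel), which is exactly the Hille--Yosida generation theorem for quasi-contractive semigroups. Your sketch of the two implications (Laplace-transform representation of the resolvent for (i) $\Rightarrow$ (ii), and the Yosida-approximant construction with the shift $\tilde A = A - w$ for (ii) $\Rightarrow$ (i)) is accurate, and your concluding remark that you would simply cite the reference rather than reproduce the construction is precisely what the paper does.
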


Next, we recall that, given a strongly continuous semigroup $e^{sA}$ generated by an operator $A$,
its \textit{growth bound} is defined as
\begin{equation}\label{eq:growthbound}
\omega_0(A)
=\inf\{w\in\R\,:\,\text{there exists }C_w\geq 1\text{ such that }\|e^{s A}\|_{\mathcal{L}}
\leq C_w e^{w s}\text{ for all }s\geq 0\}.
\end{equation}
The second result from semigroup theory that we need (see e.g.~\cite[Corollary 2.11, Chapter IV]{EngellNagel00}) relates \eqref{eq:growthbound} with the \textit{essential bound} 
$$
\omega_{\text{ess}}(A)
=\inf_{s>0}\frac{1}{s}\log\|e^{s A}\|_{\mathcal{L}/\mathcal{K}},
$$
where $\mathcal{L}/\mathcal{K}$ is the quotient space (the so-called Calkin algebra),
and the \textit{spectral bound}
$$
s(A)
=\sup\{\Re\lambda\,:\,\lambda\in\sigma(A)\}.
$$

\begin{prop}\label{prop:growthbound}
Given an operator $A$ generating a strongly continuous semigroup, it holds
$$
\omega_0(A)
=\max\{\omega_{\text{ess}}(A),s(A)\}.
$$
Moreover,
$
\sigma(A)\cap \{\Re\lambda\geq w\}
$
is finite for any $w>\omega_{\text{ess}}(A)$.
\end{prop}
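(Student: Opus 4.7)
The plan is to reduce the statement about the continuous semigroup to a statement about the spectral radius of a single bounded operator $e^{sA}$, and then invoke the spectral theory of compact perturbations from Proposition \ref{prop:StabilityFredholm}. The key identities I want to establish are $r(e^{sA}) = e^{s\omega_0(A)}$ for the spectral radius in $\mathcal{L}$, and $r_{\text{ess}}(e^{sA}) = e^{s\omega_{\text{ess}}(A)}$ for the spectral radius in the Calkin algebra $\mathcal{L}/\mathcal{K}$. Both are instances of Gelfand's formula applied in the respective Banach algebras: indeed, $\omega_0(A) = \lim_{s\to\infty} s^{-1}\log\|e^{sA}\|_{\mathcal{L}}$ (since the infimum in \eqref{eq:growthbound} is actually a limit by submultiplicativity of $s\mapsto\|e^{sA}\|_{\mathcal{L}}$), and analogously for the essential version using the definition of $\omega_{\text{ess}}(A)$.

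Next I would record the two easy inequalities. For $\omega_0(A) \geq \omega_{\text{ess}}(A)$, I simply use $\|e^{sA}\|_{\mathcal{L}/\mathcal{K}}\leq\|e^{sA}\|_{\mathcal{L}}$. For $\omega_0(A)\geq s(A)$, given $\lambda\in\sigma(A)$ with $\Re\lambda$ close to $s(A)$, I would use Proposition \ref{prop:semigroupTFAE}: if we had $\omega_0(A) < \Re\lambda$, then some small $w\in(\omega_0(A),\Re\lambda)$ would satisfy $\|(A-\lambda)^{-1}\|_{\mathcal{L}}\leq (\Re\lambda - w)^{-1}$, making $\lambda$ belong to the resolvent set, a contradiction.

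The substantive part is the reverse inequality $\omega_0(A) \leq \max\{\omega_{\text{ess}}(A), s(A)\}$ together with finiteness of the spectrum in right half planes. Fix $s>0$ and write $T = e^{sA}$. By the identity above, $r_{\text{ess}}(T) = e^{s\omega_{\text{ess}}(A)}$. Pick any $w > \omega_{\text{ess}}(A)$ and consider the annulus $\{|\mu| \geq e^{sw}\}$. By definition of the essential spectral radius, for every $\mu$ in this annulus, $T-\mu$ is Fredholm of index zero, and Proposition \ref{prop:StabilityFredholm} guarantees stability under compact perturbations; so the set $\sigma(T)\cap\{|\mu|\geq e^{sw}\}$ consists entirely of isolated eigenvalues of finite multiplicity. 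An accumulation argument (the annulus is compact and the resolvent set is open) then shows this set is \emph{finite}. This is where I expect the trickiest bookkeeping: one has to verify carefully that isolated points of the spectrum of $T$ outside the essential spectrum are indeed eigenvalues of $T$, which relies on the Fredholm alternative applied to a Riesz projection onto the corresponding spectral subspace.

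The remaining step is the spectral mapping outside the essential spectrum: each such eigenvalue $\mu$ of $T=e^{sA}$ with $|\mu|>e^{s\omega_{\text{ess}}(A)}$ must be of the form $\mu = e^{s\lambda}$ for some $\lambda\in\sigma(A)$ (this is the partial spectral mapping theorem for semigroups, proved by constructing an $A$-eigenvector out of a $T$-eigenvector via contour integration of the resolvent). Therefore $r(T) \leq \max\{e^{s\omega_{\text{ess}}(A)}, e^{s\,s(A)}\}$, which after taking $s^{-1}\log$ and letting $s\to\infty$ yields $\omega_0(A)\leq\max\{\omega_{\text{ess}}(A),s(A)\}$. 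Finally, the finiteness claim in the proposition is exactly the intermediate conclusion that $\sigma(T)\cap\{|\mu|\geq e^{sw}\}$ is finite, translated back through $\mu=e^{s\lambda}$ to $\sigma(A)\cap\{\Re\lambda\geq w\}$ after fixing $s$ large enough that the exponential map is injective on the strip in question. The main obstacle is this last translation, since $\exp$ is periodic; one handles it by noting that each $\mu$ has only countably many preimages $\lambda$, and at most finitely many can lie in the half plane $\Re\lambda\geq w$ because of the imaginary spacing $2\pi/s$ and the resolvent bound from Proposition \ref{prop:semigroupTFAE}.
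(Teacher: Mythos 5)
The paper does not prove this proposition; it quotes it directly as \cite[Corollary~2.11, Chapter~IV]{EngellNagel00}, so you are reconstructing a textbook argument from scratch. Your overall route — pass to $T=e^{sA}$, use Gelfand's formula to convert $\omega_0$ and $\omega_{\text{ess}}$ into spectral and essential spectral radii, apply Fredholm stability in the annulus $\{|\mu|\geq e^{sw}\}$, and finish with the spectral mapping theorem for point spectrum — is indeed the standard one and matches the route in the cited reference.

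Two steps need more care than you give them. First, the move from ``$T-\mu$ is Fredholm of index zero for every $\mu$ in the annulus'' to ``$\sigma(T)$ in the annulus is a finite set of isolated eigenvalues'' is not a bare compactness argument: Fredholmness alone does not force isolation. One needs the analytic Fredholm theorem (equivalently, the Riesz theory of poles of the resolvent in the unbounded component of $\C\setminus\sigma_{\text{ess}}(T)$): since the resolvent set is non-empty in each connected component of that annulus, the set of $\mu$ with $\ker(T-\mu)\neq 0$ must be discrete, and only then does compactness of the annulus give finiteness. You flag this as ``the trickiest bookkeeping'', correctly, but the proposal does not supply it.

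Second, and more importantly, your proposed resolution of the periodicity of $\exp$ does not work. The preimages of a single $\mu\in\sigma(T)$ in $\{\Re\lambda\geq w\}$ lie on a vertical line with imaginary spacing $2\pi/s$; taking $s$ \emph{large} shrinks this spacing, so it only makes the count worse, and no choice of $s$ makes $\exp(s\cdot)$ injective on a vertical half-line of unbounded height. Proposition~\ref{prop:semigroupTFAE} gives a resolvent bound on $\{\Re\lambda>\omega_0(A)\}$ and therefore says nothing about spectrum in the strip $w\leq\Re\lambda\leq\omega_0(A)$, which is exactly where the putative infinitely many eigenvalues would accumulate at imaginary infinity. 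The correct step is the Riesz projection argument: let $P$ be the finite-rank spectral projection of $T$ onto the finitely many $\mu_j$ in the annulus. Then $P$ commutes with the semigroup, $\mathrm{Ran}(P)$ is a finite-dimensional $A$-invariant subspace, so $\sigma(A|_{\mathrm{Ran}(P)})$ is finite; while on $\ker P$ one has $r(e^{sA}|_{\ker P})<e^{sw}$, hence $\omega_0(A|_{\ker P})<w$ and thus $s(A|_{\ker P})<w$, so no part of $\sigma(A)\cap\{\Re\lambda\geq w\}$ can come from $\ker P$. Taken together this yields both the finiteness claim and the equality $\omega_0(A)=\max\{\omega_{\text{ess}}(A),s(A)\}$, where your periodicity worry evaporates because the decomposition is done at the level of the operator $A$, not by counting preimages of $\exp$.
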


The third result in semigroup theory that we need is the stability of the essential bound with
respect to compact perturbations (see e.g.~\cite[Proposition 2.12, Chapter IV]{EngellNagel00}).

\begin{prop}\label{prop:wessA+K}
Given an operator $A$ generating a strongly continuous semigroup, and $K\in\mathcal{K},$ it holds
$$
\omega_{\text{ess}}(A+K)
=\omega_{\text{ess}}(A).
$$
\end{prop}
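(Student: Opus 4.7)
\medskip

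\noindent\textbf{Proof plan for Proposition \ref{prop:wessA+K}.}
Since $\omega_{\text{ess}}(A)=\inf_{s>0}\frac{1}{s}\log\|e^{sA}\|_{\mathcal{L}/\mathcal{K}}$, the statement reduces to showing that $e^{s(A+K)}$ and $e^{sA}$ represent the same class in the Calkin algebra $\mathcal{L}/\mathcal{K}$ for every $s\geq 0$. Equivalently, the plan is to show
\begin{equation}\label{eq:planDuhamel}
e^{s(A+K)}-e^{sA}\in\mathcal{K}, \qquad \text{for all } s\geq 0.
\end{equation}
Once \eqref{eq:planDuhamel} is established, the quotient norms coincide, $\|e^{s(A+K)}\|_{\mathcal{L}/\mathcal{K}}=\|e^{sA}\|_{\mathcal{L}/\mathcal{K}}$, and taking $\inf_{s>0}\tfrac{1}{s}\log$ on both sides yields $\omega_{\text{ess}}(A+K)=\omega_{\text{ess}}(A)$.

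The natural tool for \eqref{eq:planDuhamel} is the Duhamel (variation of constants) identity
\begin{equation}\label{eq:planDuhamel2}
e^{s(A+K)}-e^{sA}=\int_0^s e^{(s-t)(A+K)}\,K\,e^{tA}\dif t,
\end{equation}
which is a general feature of bounded perturbations of generators of strongly continuous semigroups (and which in fact requires checking that $A+K$ again generates a semigroup; this is immediate because $K\in\mathcal{L}$ by the compactness assumption, so the bounded perturbation theorem applies). For each fixed $t$, the integrand is a composition of a compact operator $K$ with bounded operators on both sides, hence lies in $\mathcal{K}$. The remaining step is to show that the Bochner integral of this $\mathcal{K}$-valued map again lies in $\mathcal{K}$; since $\mathcal{K}\subset\mathcal{L}$ is a closed subspace, it suffices to prove that the integrand is norm-continuous in $t$.

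The norm-continuity of $t\mapsto e^{(s-t)(A+K)}K e^{tA}$ is the crux of the argument. It follows from the classical fact that, whenever $T_t$ is a strongly continuous semigroup and $K\in\mathcal{K}$, the maps $t\mapsto T_tK$ and $t\mapsto KT_t$ are norm-continuous: indeed, $K$ sends the unit ball to a relatively compact set, and strong continuity of $T_t$ upgrades to uniform convergence on such sets, in both cases by the standard finite $\epsilon$-net argument combined with the uniform boundedness of $T_t$ on bounded time intervals. Applying this to both $e^{tA}$ and $e^{(s-t)(A+K)}$ gives norm-continuity of the integrand, hence the Bochner integral in \eqref{eq:planDuhamel2} converges in $\mathcal{K}$, proving \eqref{eq:planDuhamel}.

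The main obstacle is precisely this norm-continuity step, because a priori the Duhamel integrand is only strongly continuous; the passage from strong to norm continuity is what forces the presence of a compact factor in the middle. Once that is in hand, the rest is formal algebra in the Calkin quotient. (Since this is a standard result quoted from \cite{EngellNagel00}, in the paper we would simply cite it without carrying out these details.)
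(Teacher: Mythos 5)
The paper does not prove this proposition; it only states it with a citation to Engel--Nagel \cite[Proposition 2.12, Chapter IV]{EngellNagel00}, so there is no in-paper proof to compare against. Your proposal reproduces the standard textbook argument: Duhamel's formula, compactness of the integrand, and closedness of $\mathcal{K}$ in $\mathcal{L}$ together with norm-continuity of the integrand. The structure is correct, and the reduction to the Calkin-algebra norm being unchanged is exactly the right observation.

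One step is stated slightly too quickly. The $\epsilon$-net argument you describe directly gives norm-continuity of $t\mapsto T_tK$ (you need $\sup_{\|x\|\le 1}\|(T_t-T_s)Kx\|\to 0$, and $K(B_1)$ is precompact while $T_t$ converges strongly, uniformly on compacts). It does \emph{not} directly give norm-continuity of $t\mapsto KT_t$, because the set $\{(T_t-T_s)x:\|x\|\le 1\}$ is not precompact. The standard fix is to pass to adjoints: $\|K(T_t-T_s)\|=\|(T_t^*-T_s^*)K^*\|$, $K^*$ is compact, and $T_t^*$ is again strongly continuous on a reflexive space (in particular on the Hilbert space $L_n^2$ used in the paper), so the $\epsilon$-net argument then applies. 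Both variants of norm-continuity are genuinely needed here, since the Duhamel integrand $e^{(s-t)(A+K)}Ke^{tA}$ has a semigroup on each side of $K$. With that refinement the argument is complete and matches the proof in Engel--Nagel.
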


\subsection{The transport operator $T_b$}


\begin{lemma}\label{lemma:Tbsemigroup}
	The operator $T_b$ generates a strongly continuous
 semigroup
	$$
	e^{sT_b}
	\Omega
	=\Omega\circ\bar{X}_b(s,\cdot)^{-1},
	$$
	where $\bar{X}_b$ is the flow map
\begin{equation}\label{eq:flowmap}
\partial_s\bar{X}_b
	=\bar{V}_b(\bar{X}_b),
	\quad\quad
	\bar{X}_b|_{s=0}=\text{id}.
\end{equation}
	Moreover,
\begin{equation}\label{eq:esTbbound}
\|e^{sT_b}\|_{\mathcal{L}}
	=e^{-bs}.
\end{equation}
\end{lemma}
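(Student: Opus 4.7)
The plan is to reduce everything to the method of characteristics, using the divergence structure of $\bar{V}_b$ to obtain the norm identity from a single Jacobian computation.

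First I would verify that the flow $\bar{X}_b(s,\cdot)$ is a global $C^1$-diffeomorphism of $\R^2$. Since $\bar{\Omega}\in C_c^\infty$, the velocity $\bar{V}$ is smooth and compactly supported, while $-bX$ is linear, so $\bar{V}_b$ is globally Lipschitz with at most linear growth; Picard--Lindel\"of then yields a unique global solution to \eqref{eq:flowmap} defined for all $s\in\R$. Because $\bar{V}$ is rotationally equivariant (as the Biot--Savart velocity of a radial vortex) and $-bX$ commutes with rotations, the whole flow commutes with rotations, so composition with $\bar{X}_b(s,\cdot)^{-1}$ preserves $n$-fold symmetry.

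The decisive calculation is the Jacobian determinant. Since $\bar{V}$ is divergence-free and $\mathrm{div}(-bX)=-2b$, we have $\mathrm{div}\,\bar{V}_b\equiv -2b$, constant. Liouville's formula therefore gives
$$
\det(\nabla_X\bar{X}_b(s,X))=e^{-2bs}
\quad\text{for every } X\in\R^2,\ s\in\R.
$$
With this in hand, I would define $e^{sT_b}\Omega:=\Omega\circ\bar{X}_b(s,\cdot)^{-1}$ and check the three required properties. The semigroup law is immediate from $\bar{X}_b(s+t,\cdot)=\bar{X}_b(s,\cdot)\circ\bar{X}_b(t,\cdot)$. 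Differentiating along characteristics at $s=0$ yields $-\bar{V}_b\cdot\nabla\Omega=T_b\Omega$ for $\Omega\in D(T_b)$, identifying the generator. The norm identity \eqref{eq:esTbbound} follows from the change of variables $Y=\bar{X}_b(s,\cdot)^{-1}(X)$:
$$
\|e^{sT_b}\Omega\|_{L^2}^2
=\int_{\R^2}|\Omega(Y)|^2\,|\det\nabla_Y\bar{X}_b(s,Y)|\,\dif Y
=e^{-2bs}\|\Omega\|_{L^2}^2,
$$
which is sharp, so $\|e^{sT_b}\|_{\mathcal{L}}=e^{-bs}$. The same change of variables shows that the zero mean condition \eqref{eq:U0} is preserved ($\int e^{sT_b}\Omega\,\dif X=e^{-2bs}\int\Omega\,\dif Y$), so $e^{sT_b}$ maps $L_n^2$ into itself.

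Strong continuity is the only routine point: for $\Omega\in C_c^\infty\cap L_n^2$ the $L^2$-continuity of $s\mapsto\Omega\circ\bar{X}_b(s,\cdot)^{-1}$ follows from uniform continuity of $\Omega$ together with continuous dependence of the flow on $s$, and a standard density argument extends it to all of $L_n^2$ via the uniform bound just proved. I do not expect any serious obstacle; the whole lemma is essentially the content of Liouville's theorem applied to a velocity field whose divergence is a constant.
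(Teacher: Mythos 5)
Your proposal is correct and follows essentially the same route as the paper: global existence of the flow via the Lipschitz bound on $\bar{V}_b$, the constant divergence $\mathrm{div}\,\bar{V}_b=-2b$ feeding into Liouville's formula for the Jacobian, and the change of variables to obtain $\|e^{sT_b}\Omega\|_{L^2}=e^{-bs}\|\Omega\|_{L^2}$. The extra remarks you include — rotational equivariance preserving $n$-fold symmetry, preservation of the zero-mean condition, and the density argument for strong continuity — are correct and fill in details the paper leaves implicit, but they do not constitute a different method.
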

\begin{proof}
Notice that the flow map is defined through a Lipschitz vector field
$$
\|\bar{V}_b\|_{Lip}
\leq\|\bar{V}\|_{Lip}
+b.
$$
Hence, the first statement follows from the Cauchy-Lipschitz theory applied to the transport equation.
For the second statement, by solving the ODE
$$
\partial_sJ_{\bar{X}_b}=\mathrm{div}(\bar{V}_b)J_{\bar{X}_b}
\quad\text{with}\quad
\mathrm{div}(\bar{V}_b)=-2b,
$$
we deduce that the Jacobian of the flow map equals
\begin{equation}\label{JX}
J_{\bar{X}_b}=e^{-2bs}.
\end{equation}
Therefore,
$$
\int_{\R^2}|e^{sT_b}\Omega|^2\dif X
=e^{-2bs}
\int_{\R^2}|\Omega|^2\dif Y,
$$
where $X=\bar{X}_b(s,Y)$.
\end{proof}

\begin{lemma}\label{lemma:Tinv}
The resolvent map
\begin{equation}\label{eq:Tinv}
\begin{split}
(T_b-\lambda)^{-1}:
\{\Re\lambda>-b\}& \rightarrow  \mathcal{L} \\
(b,\lambda) & \mapsto
\left(\Omega\mapsto-\int_0^\infty
e^{s(T_b-\lambda)}\Omega\dif s\right)
\end{split}
\end{equation}
is well defined with
\begin{equation}\label{eq:resolventbound}
\|(T_b-\lambda)^{-1}\|_{\mathcal{L}}\leq
\frac{1}{\Re\lambda+b}.
\end{equation}
For any $\Omega\in L_n^2$, the map
$(b,\lambda)\mapsto (T_b-\lambda)^{-1}\Omega$ is continuous.
\end{lemma}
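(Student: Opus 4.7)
The first two assertions are an immediate consequence of Lemma \ref{lemma:Tbsemigroup} combined with Proposition \ref{prop:semigroupTFAE} applied with $w=-b$. Indeed, the bound $\|e^{sT_b}\|_{\mathcal{L}}=e^{-bs}$ gives $\|e^{s(T_b-\lambda)}\Omega\|_{L^2}\leq e^{-s(\Re\lambda+b)}\|\Omega\|_{L^2}$, which is absolutely integrable on $(0,\infty)$ whenever $\Re\lambda+b>0$; a direct computation (or the standard semigroup identity) verifies that the resulting operator inverts $T_b-\lambda$, and integrating the bound yields \eqref{eq:resolventbound}.

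The bulk of the proof is devoted to the joint continuity statement, which I would establish by applying dominated convergence to the integral representation \eqref{eq:Tinv}. Fix $(b_*,\lambda_*)$ with $\Re\lambda_*>-b_*$, choose a compact neighbourhood $\mathcal{N}$ of $(b_*,\lambda_*)$ on which $\Re\lambda+b\geq 2\delta$ for some $\delta>0$, and note that on $\mathcal{N}$ the integrand is dominated by the integrable majorant $s\mapsto e^{-2\delta s}\|\Omega\|_{L^2}$. It therefore suffices to show that $(b,s)\mapsto e^{sT_b}\Omega$ is continuous from $\mathcal{N}\times[0,\infty)$ into $L_n^2$ for each fixed $\Omega$, because dominated convergence will then pass the limit through the improper integral.

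For the semigroup continuity I would exploit the explicit flow representation from Lemma \ref{lemma:Tbsemigroup}. Since $\bar{V}_b=\bar{V}-bX$ is Lipschitz with constant bounded uniformly for $b$ in a compact set, Gronwall's inequality applied to \eqref{eq:flowmap} yields locally Lipschitz dependence of the flow map $\bar{X}_b(s,X)$ on $(b,s)$, uniformly on compacts in $X$. For $\Omega\in C_c^\infty(\R^2)\cap L_n^2$ the identity $e^{sT_b}\Omega=\Omega\circ\bar{X}_b(s,\cdot)^{-1}$, combined with the constant Jacobian \eqref{JX} and a change of variables, makes $L^2$-continuity of $(b,s)\mapsto e^{sT_b}\Omega$ evident. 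The general case follows by density, using that $\|e^{sT_b}\|_{\mathcal{L}}$ is uniformly bounded on $\mathcal{N}\times[0,S]$ for any fixed $S>0$.

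The only mild obstacle is the interplay between the improper integral and the density approximation, which I would handle by first truncating at some large $S$ so that the tail $\int_S^\infty e^{-2\delta s}\dif s$ is arbitrarily small, then applying the semigroup continuity uniformly on $[0,S]$ for smooth compactly supported data, and finally invoking the uniform operator bound to extend to all $\Omega\in L_n^2$.
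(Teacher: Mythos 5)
Your proposal is correct and follows essentially the same route as the paper: first-statement-plus-bound from Lemma \ref{lemma:Tbsemigroup} and Proposition \ref{prop:semigroupTFAE}, then joint continuity by establishing $(b,s)\mapsto e^{sT_b}\Omega$ continuity for $\Omega\in L_n^2\cap C_c^\infty$ via the flow map, applying dominated convergence in \eqref{eq:Tinv}, and finally extending by density using the uniform resolvent bound \eqref{eq:resolventbound}. You spell out the Gronwall/flow-continuity and the truncation/density interplay more explicitly than the paper, but the underlying argument is identical.
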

\begin{proof}
The first statement follows directly from Lemma \ref{lemma:Tbsemigroup} and Proposition \ref{prop:semigroupTFAE}.
In fact, a simple integration by parts shows that \eqref{eq:Tinv} is the inverse of $(T_b-\lambda)$.
Alternatively, \eqref{eq:Tinv} is the Laplace transform of the semigroup $e^{sT_b}$. 
For the second statement, given $\Omega\in L_n^2\cap C_c^\infty$ and $s>0$, the map $(b,\lambda)\mapsto e^{s(T_b-\lambda)}\Omega$ is continuous since the flow map \eqref{eq:flowmap} is defined through a continuous in $b$ uniformly Lipschitz vector field. The continuity (indeed analiticity) in $\lambda$ is clear from the definition.
Hence, \eqref{eq:esTbbound} allows to apply the dominated convergence theorem in \eqref{eq:Tinv}. Finally, by applying that $L_n^2\cap C_c^\infty$ is strongly dense and \eqref{eq:resolventbound}, 
we conclude the proof.
\end{proof}

\subsection{The compact operator $K$}

As pointed in \cite{ABCDGMKpp}, while it is not feasible to extend the Biot-Savart operator throughout $L^2$, it can be achieved within the subspace $L_n^2$, thanks to the following proposition.  By a slight abuse of notation, we will continue to denote the Biot-Savart operator as $V=\nabla^\perp\Delta^{-1}\Omega$.
By combining \eqref{eq:VDVL2} with the Rellich-Kondrachov Theorem, and the fact that $\bar{\Omega}$ is compactly supported, we deduce the compactness of $K$ as a corollary. The second estimate \eqref{eq:VL2} will imply that the velocity of the eigenfunction belongs to $L^2$.

\begin{lemma}\label{lemma:VDVL2}
There exists $C>0$ such that
\begin{equation}\label{eq:VDVL2}
\frac{1}{R}\|V\|_{L^2(B_R)}+\|DV\|_{L^2}
\leq C\|\Omega\|_{L^2},
\end{equation}
for any $R>0$ and $\Omega\in L_n^2$, where $V=\nabla^\perp\Delta^{-1}\Omega$. 
If $\mathrm{supp}(\Omega)\subset B_{\bar{R}}$ for some $\bar{R}>0$, then also
\begin{equation}\label{eq:VL2}
\|V\|_{L^2}\leq C\bar{R}\|\Omega\|_{L^2}.
\end{equation}
\end{lemma}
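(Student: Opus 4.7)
The plan is to combine Calderón-Zygmund theory with a Poincaré inequality, leveraging the structure of $L_n^2$.

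The estimate $\|DV\|_{L^2}\leq C\|\Omega\|_{L^2}$ is classical: writing $DV=D\nabla^\perp\Delta^{-1}\Omega$ as a combination of second-order Riesz transforms, it follows from their $L^2$-boundedness, with no symmetry required on $\Omega$.

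For the $L^2(B_R)$ bound I would invoke the scaled Poincaré-Wirtinger inequality
\[
\|u\|_{L^2(B_R)}\leq CR\,\|\nabla u\|_{L^2(B_R)}\qquad\text{whenever }\int_{B_R}u\,dx=0,
\]
applied to each Cartesian component of $V$. The key point is to verify that $V$ has vanishing mean on every disk centered at the origin. To this end, I would write $\Omega(Re^{i\theta})=\sum_{j\in\Z}\Omega_{jn}(R)e^{ijn\theta}$ in accordance with \eqref{eq:directsum}, so that $\psi=-\Delta^{-1}\Omega$ and the polar components $V_r+iV_\theta$ inherit the same angular modes $k\in n\Z$ (each mode of $-\Delta$ is inverted radially). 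Identifying $V$ with the complex-valued field $V_1+iV_2=(V_r+iV_\theta)e^{i\theta}$ shifts every Fourier mode by $+1$, placing them in $n\Z+1$. For $n\geq 2$ this set excludes $0$, so $V$ has vanishing angular average on every circle around the origin, and therefore $\int_{B_R}V\,dx=0$. Poincaré then yields $R^{-1}\|V\|_{L^2(B_R)}\leq C\|DV\|_{L^2(B_R)}\leq C\|\Omega\|_{L^2}$.

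For \eqref{eq:VL2}, assume $\mathrm{supp}(\Omega)\subset B_{\bar R}$ and split $\R^2=B_{2\bar R}\cup B_{2\bar R}^c$. The inner ball is handled by the previous estimate. On the exterior I would expand the Biot-Savart kernel in powers of $y/|x|$: the monopole contribution vanishes by the zero-mean condition \eqref{eq:U0}, and the dipole contribution $\int y_i\Omega\,dy$ vanishes because these moments pick up exactly the $k=\pm 1$ angular modes, which are absent since $n\geq 2$. This yields $|V(x)|\lesssim \bar R^2\|\Omega\|_{L^1}|x|^{-3}$ for $|x|\geq 2\bar R$, and Cauchy-Schwarz gives $\|\Omega\|_{L^1}\leq C\bar R\|\Omega\|_{L^2}$. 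Integration then produces $\|V\|_{L^2(B_{2\bar R}^c)}\leq C\bar R\|\Omega\|_{L^2}$.

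The minor technical subtlety is justifying Poincaré, which a priori requires $V\in W^{1,2}(B_R)$ rather than merely $DV\in L^2$. This is handled by a density argument: approximate $\Omega$ by smooth compactly supported elements of $L_n^2$---for which $V$ is explicit (via Lemma \ref{lemma:vr:eigenfunction}) and decays sufficiently at infinity---and pass to the limit through the uniform bound on $DV$. With this in place, the arguments above are routine.
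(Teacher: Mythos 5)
Your proof is correct, and it reaches the same two structural pillars as the paper's---vanishing mean of $V$ on balls centered at the origin plus Poincar\'e for \eqref{eq:VDVL2}, and subtraction of the leading kernel term for \eqref{eq:VL2}---but by a somewhat different route. For the mean-zero step, the paper does not pass through the Fourier decomposition: it directly observes from the Biot-Savart integral that $n$-fold symmetry of $\Omega$ forces $V(X)=e^{-2\pi ik/n}V(e^{2\pi ik/n}X)$, and then integrates over $B_R$ and averages over $k=0,\dots,n-1$, so that $\int_{B_R}V=\bigl(\tfrac{1}{n}\sum_k e^{-2\pi ik/n}\bigr)\int_{B_R}V=0$. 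Your Fourier-mode argument (modes of $V_1+iV_2$ lie in $n\Z+1$, which excludes $0$ when $n\geq 2$) encodes exactly the same symmetry fact and is equally valid; the paper's version is just more elementary and avoids discussing how polar components inherit angular modes. For the exterior estimate, you go further than necessary: the paper subtracts only the monopole, giving $\bigl|\tfrac{1}{X-Y}-\tfrac{1}{X}\bigr|\lesssim |Y|/|X|^2$ and hence $|V(X)|\lesssim \bar R^2\|\Omega\|_{L^2}|X|^{-2}$, whose $L^2$ norm over $B_{2\bar R}^c$ is already $\lesssim\bar R\|\Omega\|_{L^2}$. Your additional cancellation of the dipole moment (using that $\pm 1\notin n\Z$) is correct but superfluous; it yields $|X|^{-3}$ decay, which is more than the $L^2$ estimate requires. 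Finally, you are right to flag the density issue at the end: the paper frames the whole argument on $L_n^2\cap C_c$ and extends by density, since $\nabla^\perp\Delta^{-1}$ is not globally well-defined on $L^2$, which is precisely the point of restricting to $L_n^2$.
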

\begin{proof}
Let $\Omega\in L_n^2\cap C_c$. 
Firstly, by applying the $n$-fold symmetry \eqref{eq:nfold}, we deduce that
\begin{equation}\label{eq:Vnfold}
V(X)
=\left(\frac{1}{2\pi i}\int_{\R^2}\frac{\Omega(Y)}{X-Y}\dif Y\right)^*
=\left(\frac{1}{2\pi i}\int_{\R^2}\frac{\Omega(Y)}{X-e^{-\frac{2\pi i k}{n}}Y}\dif Y\right)^*
=e^{-\frac{2\pi i k}{n}} V(e^{\frac{2\pi i k}{n}}X).
\end{equation}
Therefore, by integrating $V$ on the ball $B_R$, we get
$$
\int_{B_R}V(x)\dif x
=e^{-\frac{2\pi i k}{n}}\int_{B_R}V(x)\dif x,
$$
from which we deduce that
\begin{equation}\label{eq:intV=0}
\int_{B_R}V(x)\dif x
=\left(\frac{1}{n}\sum_{k=0}^{n-1}e^{-\frac{2\pi i k}{n}}\right)
\int_{B_R}V(x)\dif x
= 0.
\end{equation}
Therefore, by applying the Poincar\'e inequality, we get
\begin{equation}\label{eq:Poincare}
\|V\|_{L^2(B_R)}
\leq CR\|DV\|_{L^2(B_R)}.
\end{equation}
Secondly, by applying the Plancherel identity on the Biot-Savart law \eqref{eq:BiotSavart}, we obtain
$$\|DV\|_{L^2}=\|\Omega\|_{L^2}.$$ 
These inequalities allow to extend the Biot-Savart operator in $L_n^2$ by density. For the second statement, since we already have \eqref{eq:Poincare} for $R=2\bar{R}$, it is enough to estimate the $L^2$-norm of $V$ outside $B_{2\bar{R}}$. By applying the zero-mean condition of $\Omega$, we get
\begin{align*}
\|V\|_{L^2(\R^2\setminus B_{2\bar{R}})}
&=\frac{1}{2\pi}\left(\int_{\R^2\setminus B_{2\bar{R}}}\left|\int_{B_{\bar{R}}}\Omega(y)\left(\frac{1}{X-Y}-\frac{1}{X}\right)\dif Y\right|^2\dif X\right)^{\nicefrac{1}{2}}\\
&\leq \frac{C}{\bar{R}}
\int_{B_{\bar{R}}}|\Omega (Y)||Y|\dif Y
\leq C\bar{R}\|\Omega\|_{L^2},
\end{align*}
where we have applied $|X-Y|\geq\frac{1}{2}|X|$ in the first inequality, and the H\"older inequality in the second one. The constant $C>0$ changes from line to line, but it keeps universal.
\end{proof}

\begin{cor}
The operator $K$ is compact.
\end{cor}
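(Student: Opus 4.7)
The plan is direct: combine the interior $H^1$ bound for the Biot-Savart velocity given by Lemma~\ref{lemma:VDVL2} with the Rellich-Kondrachov compactness theorem, exploiting the compact support of $\bar{\Omega}$.

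First, let $\bar{R}>0$ be such that $\mathrm{supp}(\bar{\Omega})\subset B_{\bar{R}}$, which is available since $\bar{\Omega}\in C_c^\infty(\R^2)$ by Theorem~\ref{thm:L}. Since $\bar{\Omega}$ is smooth, $\nabla\bar{\Omega}\in L^\infty(\R^2)$ and is also supported in $B_{\bar{R}}$. For any $\Omega\in L_n^2$ with associated velocity $V=\nabla^\perp\Delta^{-1}\Omega$, we obtain
$$
\|K\Omega\|_{L^2}
=\|V\cdot\nabla\bar{\Omega}\|_{L^2}
\leq \|\nabla\bar{\Omega}\|_{L^\infty}\|V\|_{L^2(B_{\bar{R}})},
$$
so $K$ factors through the restriction operator $\Omega\mapsto V|_{B_{\bar{R}}}$ composed with multiplication by the bounded, compactly supported function $\nabla\bar{\Omega}$.

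Now take a bounded sequence $(\Omega_k)_{k\in\N}\subset L_n^2$. By Lemma~\ref{lemma:VDVL2} applied with $R=\bar{R}$, the corresponding velocities $V_k$ satisfy
$$
\|V_k\|_{L^2(B_{\bar{R}})}+\|DV_k\|_{L^2(B_{\bar{R}})}
\leq C(1+\bar{R})\|\Omega_k\|_{L^2},
$$
so $(V_k)$ is bounded in $H^1(B_{\bar{R}})$. The Rellich-Kondrachov Theorem then yields a subsequence $(V_{k_j})$ converging strongly in $L^2(B_{\bar{R}})$. Multiplying by the bounded, compactly supported multiplier $\nabla\bar{\Omega}$ preserves strong $L^2$ convergence, so $(K\Omega_{k_j})$ converges in $L^2(\R^2)$. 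This proves that $K$ is compact.

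The main obstacle is essentially absent here, since all the heavy lifting has already been done in Lemma~\ref{lemma:VDVL2}: the Biot-Savart operator $\nabla^\perp\Delta^{-1}$ is not bounded on all of $L^2(\R^2)$, but the $n$-fold symmetry together with the Poincar\'e inequality on balls (consequence of \eqref{eq:intV=0}) compensates for this, giving local $L^2$ control of $V$ by $\|\Omega\|_{L^2}$. Once this is available, compactness of $\bar{\Omega}$'s support turns the global operator $K$ into a local, Rellich-compact one.
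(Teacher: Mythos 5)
Your proof is correct and follows precisely the argument the paper sketches: it combines the local $H^1$ bound from Lemma~\ref{lemma:VDVL2} with the Rellich-Kondrachov theorem and the compact support of $\bar{\Omega}$ (so that $\nabla\bar{\Omega}$ acts as a bounded, compactly supported multiplier). There is no substantive difference from the paper's approach.
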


\subsection{The operator $L_b$}

\begin{lemma}\label{lemma:Fredholm}
For any $\lambda\in\C$ with $\Re\lambda>b(a-1)$, the operator $(L_b-\lambda)$ is Fredholm with index zero.
Therefore, $\sigma(L_b)\cap\{\Re\lambda> b(a-1)\}$ consists of eigenvalues with finite multiplicity.
\end{lemma}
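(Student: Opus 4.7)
The plan is to exploit the decomposition $L_b - \lambda = (T_b + ab - \lambda) + K$ from \eqref{LbExpression}, combining the resolvent bound for $T_b$ from Lemma \ref{lemma:Tinv} with the compactness of $K$, and then invoking the stability of the Fredholm index under compact perturbations (Proposition \ref{prop:StabilityFredholm}).

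First, I would observe that the condition $\Re\lambda > b(a-1)$ is exactly equivalent to $\Re(\lambda - ab) > -b$, which by Lemma \ref{lemma:Tinv} guarantees that $T_b - (\lambda - ab) = T_b + ab - \lambda$ is invertible from $D(T_b) = D(L_b)$ onto $L_n^2$, with bounded inverse. This motivates the factorization
$$
L_b - \lambda = \bigl(T_b + ab - \lambda\bigr)\Bigl(I + \bigl(T_b + ab - \lambda\bigr)^{-1} K\Bigr),
$$
viewed as a composition of maps $D(L_b) \to L_n^2 \to L_n^2$.

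Next, since $(T_b + ab - \lambda)^{-1} \in \mathcal{L}$ and $K \in \mathcal{K}$, the product $(T_b + ab - \lambda)^{-1} K$ lies in $\mathcal{K}$, so Proposition \ref{prop:StabilityFredholm} applied to the identity $I$ (which is Fredholm of index zero) yields that the bounded operator $I + (T_b + ab - \lambda)^{-1}K$ is Fredholm of index zero on $L_n^2$. Precomposing with the invertible operator $T_b + ab - \lambda$ preserves both the Fredholm property and the index, so $L_b - \lambda \colon D(L_b) \to L_n^2$ is Fredholm of index zero.

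For the second statement, suppose $\lambda \in \sigma(L_b)$ with $\Re\lambda > b(a-1)$. Then $L_b - \lambda$ is not invertible, yet it is Fredholm with $\dim\mathrm{Ker}(L_b-\lambda) = \dim\mathrm{Coker}(L_b-\lambda) < \infty$; non-invertibility therefore forces $\mathrm{Ker}(L_b-\lambda) \neq \{0\}$, exhibiting $\lambda$ as an eigenvalue of finite algebraic multiplicity. I expect the only subtlety to be the unbounded-operator bookkeeping in the factorization, namely verifying that the relevant domain is $D(T_b)$ and that $I + (T_b+ab-\lambda)^{-1}K$ is a well-defined bounded Fredholm operator on $L_n^2$; once these points are settled, the Fredholm stability theorem delivers the conclusion without further work.
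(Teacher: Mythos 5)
Your proposal is correct and follows essentially the same strategy as the paper: decompose $L_b-\lambda=(T_b-(\lambda-ab))+K$, use Lemma~\ref{lemma:Tinv} for invertibility of the transport part when $\Re\lambda>b(a-1)$, and invoke Proposition~\ref{prop:StabilityFredholm} together with the compactness of $K$ to get Fredholm of index zero, from which the second claim about eigenvalues follows.

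The one genuine difference is cosmetic but worth noting: the paper applies Proposition~\ref{prop:StabilityFredholm} directly to the sum, treating $T_b-(\lambda-ab)$ itself as the Fredholm operator of index zero, whereas you first factor out the invertible transport part and apply the stability result to the \emph{bounded} operator $I+(T_b+ab-\lambda)^{-1}K$ on $L_n^2$. Your factorization is the cleaner way to handle the unbounded-operator bookkeeping that you yourself flag, since Proposition~\ref{prop:StabilityFredholm} is stated for $A\in\mathcal{L}$; the paper's direct application implicitly views the transport operator as acting between $D(T_b)$ (with graph norm) and $L_n^2$. Both are valid, and the paper itself uses exactly your factorization \eqref{eq:composition} a few lines later to prove semigroup generation. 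One small caution: the Fredholm-of-index-zero property gives finite \emph{geometric} multiplicity (finite-dimensional kernel), which is what the lemma asserts; your passing mention of finite \emph{algebraic} multiplicity is true here, but would need the discreteness of the spectrum in that half-plane (Lemma~\ref{lemma:essentialbound}) rather than the Fredholm argument alone.
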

\begin{proof}
We split
\begin{equation}\label{eq:Llambdasplit}
L_b-\lambda
=(T_b-(\lambda - ab))+K.
\end{equation}
Since $\Re\lambda>b(a-1)$, the first term is invertible by Lemma \ref{lemma:Tinv}. In particular, it is a Fredholm operator with index zero.
Since $K$ is compact, the operator \eqref{eq:Llambdasplit} is also Fredholm with index zero (in particular positive) by Proposition \ref{prop:StabilityFredholm}. Thus the second claim follows.
\end{proof}

\begin{lemma} The operator $L_b$ generates a strongly continuous semigroup.
\end{lemma}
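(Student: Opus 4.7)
The plan is to exploit the decomposition $L_b = ab\,I + T_b + K$ (see \eqref{LbExpression}) and reduce the claim to the bounded perturbation theorem for strongly continuous semigroups, which states that if $A$ generates a strongly continuous semigroup on a Banach space and $B$ is a bounded linear operator, then $A+B$ is also the generator of a strongly continuous semigroup. Since $K$ is compact on $L_n^2$ (as shown in the corollary after Lemma \ref{lemma:VDVL2}), it is in particular bounded, and the scalar multiple $ab\,I$ is trivially bounded. Combined with Lemma \ref{lemma:Tbsemigroup}, which asserts that $T_b$ generates the strongly continuous semigroup $e^{sT_b}$ with $\|e^{sT_b}\|_{\mathcal L}=e^{-bs}$, this immediately yields the statement.

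Concretely, I would first invoke the bounded perturbation theorem with $A=T_b$ and $B=K$ to conclude that $T_b + K$ generates a strongly continuous semigroup on $L_n^2$. Then, since shifting by a scalar multiple of the identity only rescales the semigroup, I would write
\begin{equation*}
e^{sL_b} = e^{abs}\, e^{s(T_b+K)},
\end{equation*}
which is manifestly strongly continuous and satisfies the semigroup law $e^{(s_1+s_2)L_b}=e^{s_1 L_b}e^{s_2 L_b}$. A quick norm estimate then gives
\begin{equation*}
\|e^{sL_b}\|_{\mathcal L} \leq e^{abs}\, e^{s\|K\|_{\mathcal L}}\, \|e^{sT_b}\|_{\mathcal L} \leq e^{(b(a-1)+\|K\|_{\mathcal L})s},
\end{equation*}
which, besides confirming strong continuity, will be useful later when estimating the semigroup's growth bound.

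As an alternative (should one prefer an approach using only the tools already listed in the preliminaries), one could verify criterion (ii) of Proposition \ref{prop:semigroupTFAE} directly. The operator $L_b$ is closed and densely defined since $D(L_b)=D(T_b)$ and $K$ is bounded on all of $L_n^2$. For the resolvent bound one writes
\begin{equation*}
L_b-\lambda = \bigl(T_b-(\lambda-ab)\bigr)\bigl(I + (T_b-(\lambda-ab))^{-1}K\bigr),
\end{equation*}
and, using \eqref{eq:resolventbound}, chooses $\Re\lambda$ sufficiently large so that $\|(T_b-(\lambda-ab))^{-1}K\|_{\mathcal L}\leq \tfrac{1}{2}$; a Neumann-series inversion then produces a bound of the form $\|(L_b-\lambda)^{-1}\|_{\mathcal L}\leq 1/(\Re\lambda - w)$ for a suitable $w$, after which Proposition \ref{prop:semigroupTFAE} applies.

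There is essentially no hard step here: all the work lies in the preparatory lemmas already established. The only mild subtlety is keeping track of domains to ensure that $L_b$ is indeed closed and densely defined with $D(L_b)=D(T_b)$, which follows because $K$ is defined and bounded on all of $L_n^2$ and therefore does not shrink the domain.
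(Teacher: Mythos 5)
Your proposal is correct, and your primary route is genuinely different from the paper's. The paper does not invoke the bounded perturbation theorem; instead it works entirely within the tools listed in its preliminaries (Proposition \ref{prop:semigroupTFAE}), verifying the resolvent estimate directly. It splits $L_b-\lambda=\bigl(T_b-(\lambda-ab)\bigr)\circ\bigl(I+C(b,\lambda)\bigr)$ with $C(b,\lambda)=(T_b-(\lambda-ab))^{-1}\circ K$, shows $\|C(b,\lambda)\|_{\mathcal L}\leq\tfrac12$ for $\Re\lambda+b(1-a)\geq 2\|K\|_{\mathcal L}$ via \eqref{eq:resolventbound}, inverts $I+C$ by Neumann series, and reads off a resolvent bound of the form $1/(\Re\lambda-w)$, exactly the plan you sketch as your ``alternative.'' The paper's choice has a structural payoff beyond the lemma itself: the same map $C(b,\lambda)$ and its continuity in $(b,\lambda)$ are reused in the proof of Theorem \ref{thm:Lb} (the Riesz-projection/compactness argument), so deriving it here is not wasted work. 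Your bounded-perturbation route is shorter and more standard, at the cost of citing an additional external theorem; it also gives the explicit growth estimate $\|e^{sL_b}\|_{\mathcal L}\leq e^{(b(a-1)+\|K\|_{\mathcal L})s}$ cleanly. Two minor clarifications: the displayed chain $\|e^{sL_b}\|_{\mathcal L}\leq e^{abs}e^{s\|K\|_{\mathcal L}}\|e^{sT_b}\|_{\mathcal L}$ should be read as the bound furnished by the bounded perturbation theorem, not as an algebraic identity, since $e^{s(T_b+K)}\neq e^{s\|K\|_{\mathcal L}}e^{sT_b}$; and in the Neumann-series alternative the bound one actually gets is $\|(I+C)^{-1}\|_{\mathcal L}\leq 2$ when $\|C\|_{\mathcal L}\leq\tfrac12$, which still produces a resolvent estimate of the required Hille--Yosida form with a suitable $w$.
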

\begin{proof}
By applying Lemma \ref{lemma:Tinv} and that $K$ is compact, we deduce that the map
\begin{equation}\label{eq:C}
\begin{split}
	C:
	\{\Re\lambda>b(a-1)\} & \rightarrow  \mathcal{K} \\
	(b,\lambda) & \mapsto
	(T_b-(\lambda - ab))^{-1}\circ K,
\end{split}
\end{equation}
is well defined and continuous. 
In fact, 
the continuity of \eqref{eq:C} in the operator norm follows from combining the fact that the resolvent map \eqref{eq:Tinv} is continuous for any fixed $\Omega$, and that $K$ can be approximated by finite rank operators. 
The map $C$ allows decomposing \eqref{eq:Llambdasplit} into
\begin{equation}\label{eq:composition} 
L_b-\lambda
=(T_b-(\lambda - ab))\circ(I+C(b,\lambda)).
\end{equation}
Let $\lambda\in\C$ with $\Re\lambda+b(1-a)\geq 2\|K\|_{\mathcal{L}}$. By applying \eqref{eq:resolventbound}, we deduce that
$$
\|C(b,\lambda)\|_{\mathcal{L}}\leq\frac{\|K\|_{\mathcal{L}}}{\Re\lambda +b(1-a)}\leq\frac{1}{2},
$$
and thus the Neumann series
$$
(I+C(b,\lambda))^{-1}
=\sum^\infty_{n=1} (-C(b,\lambda))^n,$$
converges in the operator norm. Hence, $I+C(b,\lambda)$ is invertible with the bound
\begin{equation}\label{eq:C1/2}
\|(I+C(b,\lambda))^{-1}\|_{\mathcal{L}}\leq 1.
\end{equation}
Finally, by applying again \eqref{eq:resolventbound}, combined with \eqref{eq:composition} and \eqref{eq:C1/2}, we deduce that
$$
\|(L_b-\lambda)^{-1}\|_{\mathcal{L}}
\leq\frac{1}{\Re\lambda + b(1-a)}
\leq\frac{1}{\Re\lambda + b(1-a)-2\|K\|_{\mathcal{L}}},
$$
for any $\Re\lambda + b(1-a)\geq 2\|K\|_{\mathcal{L}}$. Then, the claim follows by applying Proposition \ref{prop:semigroupTFAE}.
\end{proof}

\begin{lemma}\label{lemma:essentialbound}
The essential bound
satisfies 
$$
\omega_{\text{ess}}(L_b)\leq (a-1)b\leq 0.$$
Therefore, $\sigma(L_b)\cap\{\Re\lambda>w\}$ is finite for any $w> (a-1)b$.
\end{lemma}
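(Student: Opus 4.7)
The plan is to exploit the decomposition $L_b = (ab + T_b) + K$ together with the stability of the essential bound under compact perturbations (Proposition \ref{prop:wessA+K}), which reduces the task to computing the essential bound of the shifted transport operator $ab + T_b$.

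First I would compute the semigroup generated by $ab+T_b$ explicitly. Since $ab$ is a scalar multiple of the identity, it commutes with $T_b$ and one has $e^{s(ab+T_b)} = e^{abs}\, e^{sT_b}$. Combining this with the operator-norm identity $\|e^{sT_b}\|_{\mathcal{L}} = e^{-bs}$ from Lemma \ref{lemma:Tbsemigroup}, I obtain
\begin{equation*}
\|e^{s(ab+T_b)}\|_{\mathcal{L}} = e^{(a-1)bs}, \qquad s\geq 0.
\end{equation*}
By the very definition \eqref{eq:growthbound} of the growth bound, this gives $\omega_0(ab + T_b) \leq (a-1)b$. Since the essential bound never exceeds the growth bound (this is a direct consequence of Proposition \ref{prop:growthbound} applied to $ab+T_b$), we conclude $\omega_{\text{ess}}(ab + T_b) \leq (a-1)b$.

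Next I would invoke Proposition \ref{prop:wessA+K} with the compact perturbation $K$ (whose compactness was established as a corollary of Lemma \ref{lemma:VDVL2}), obtaining
\begin{equation*}
\omega_{\text{ess}}(L_b) = \omega_{\text{ess}}(ab + T_b + K) = \omega_{\text{ess}}(ab + T_b) \leq (a-1)b.
\end{equation*}
The second inequality $(a-1)b \leq 0$ is immediate from the standing assumption $0 < a, b \leq 1$. Finally, the finiteness statement for $\sigma(L_b)\cap\{\Re\lambda > w\}$ with $w > (a-1)b$ follows at once from the last assertion of Proposition \ref{prop:growthbound}, since $w > \omega_{\text{ess}}(L_b)$.

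There is no real obstacle here; the proof is a bookkeeping exercise once one has the explicit norm of $e^{sT_b}$ and the abstract semigroup results. The only point that deserves a line of justification is the identity $e^{s(ab+T_b)} = e^{abs}e^{sT_b}$, which is legitimate precisely because $ab$ is a bounded (indeed scalar) perturbation commuting with $T_b$, so the bounded perturbation theorem for semigroups applies with an explicit exponential formula.
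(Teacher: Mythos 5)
Your proof is correct and takes essentially the same route as the paper: both hinge on the decomposition $L_b = ab + T_b + K$, the identity $\|e^{sT_b}\|_{\mathcal{L}} = e^{-bs}$, the stability of the essential bound under the compact perturbation $K$ (Proposition \ref{prop:wessA+K}), and Proposition \ref{prop:growthbound} for the finiteness conclusion. The only cosmetic difference is that you absorb the scalar shift $ab$ into $T_b$ before invoking compact-perturbation stability, whereas the paper factors $e^{sab}$ out of $e^{sL_b}$ and applies the perturbation result to $T_b + K$ directly; the content is the same.
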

\begin{proof}
Since the identity commutes with any operator, we can decompose 
$$
e^{sL_b}
=e^{sab}e^{s(T_b+K)}.
$$
Thus,
$$
\omega_{\text{ess}}(L_b)
\leq ab+\omega_{\text{ess}}(T_b+K).
$$
Then, by applying 
\eqref{eq:esTbbound}, that $K$ is compact, and Proposition \ref{prop:wessA+K}, we get that 
$$
\omega_{\text{ess}}(T_b+K)
=\omega_{\text{ess}}(T_b)
= -b.
$$
The second part of the statement follows from Proposition \ref{prop:growthbound}.
\end{proof}

\subsection{Proof of Theorem \ref{thm:Lb}}
\label{sec:ProofthmLb}
Firstly, we claim that there exists $b>0$ satisfying
\begin{equation}\label{eq:PropLb}
\sigma(L_b)\cap\{\Re\lambda>3b\}\neq\emptyset.
\end{equation}
Since $L=L_0$, we know from Theorem \ref{thm:L} that \eqref{eq:PropLb} holds for $b=0$. More precisely, there exists an eigenvalue $\lambda_0\in\sigma(L)\cap\C_+$ and an eigenfunction $0\neq W_0\in\text{Ker}(L-\lambda_0)$.
We will prove by contradiction that necessarily \eqref{eq:PropLb} is true for some $0<b\leq\frac{\Re\lambda_0}{4}$.

Let us suppose that \eqref{eq:PropLb} is false. Thus, $(L_b-\lambda)$ is invertible for all $\lambda \in  \C_+$ and $0<b\leq\frac{\Re\lambda_0}{4}$. We will first prove that in fact $(L_b-\lambda)^{-1}$ is continuous as a function of $(b, \lambda)$.

By applying Lemma \ref{lemma:essentialbound}  to $b=0$, we can take a $\circlearrowleft$-oriented circle $\Gamma:\T\to\C_+\setminus\sigma(L)$ of radius strictly less than $\frac{\Re\lambda_0}{4}$ surrounding $\lambda_0$. By applying Lemma \ref{lemma:Tinv} coupled with \eqref{eq:C} and \eqref{eq:composition}, our hypothesis implies that $I+C$ maps continuously the compact set $[0,\frac{\Re\lambda_0}{4}]\times\Gamma(\T)$ into 
$$
\{A\in\mathcal{L}\,:\,A\text{ invertible}\}
=\bigcup_{N\in\N}
\{A\in\mathcal{L}\,:\,\|A^{-1}\|_{\mathcal{L}}< N\}.
$$
Since the last union forms an open cover, we deduce that there exists $N\in\N$ such that
$$
\|(I+C(b,\lambda))^{-1}\|_{\mathcal{L}}\leq N,
$$
uniformly in $(b,\lambda)\in [0,\frac{\Re\lambda_0}{4}]\times\Gamma(\T)$, 
and thus, by applying \eqref{eq:resolventbound} and \eqref{eq:composition}, also
$$
\|(L_b-\lambda)^{-1}\|_{\mathcal{L}}
\leq \frac{N}{\Re\lambda + b(1-a)}.
$$
By applying these bounds and the resolvent identity
\begin{align*}
(I+C(b,\lambda))^{-1}-(I+C(b',\lambda'))^{-1}
=(I+C(b,\lambda))^{-1}\circ(C(b,\lambda)-C(b',\lambda'))\circ(I+C(b',\lambda'))^{-1},
\end{align*}
it follows that 
the map $(b,\lambda)\mapsto (I+C(b,\lambda))^{-1}$ is continuous from $[0,\frac{\Re\lambda_0}{4}]\times\Gamma(\T)$ into $\mathcal{L}$. As a consequence, the same can be deduce for the map
$(b,\lambda)\mapsto (L_b-\lambda)^{-1}$.

Once continuity is obtained, we can consider the Riesz projection
$$
P_{b,\Gamma}=-\frac{1}{2\pi i}\int_\Gamma (L_b-\lambda)^{-1}\dif\lambda.
$$
and take limits under the integral sign by the dominated convergence theorem. Therefore, we have
$$
P_{b,\Gamma}(W_0)
\to P_{0,\Gamma}(W_0),
$$
as $b\to 0$. However, $P_{b,\Gamma}(W_0)=0$ for any $0<b\leq\frac{\Re\lambda_0}{4}$ by hypothesis, while $P_{0,\Gamma}(W_0)=W_0\neq 0$, which is a contradiction. 

Therefore, there exists $\lambda_b\in\sigma(L_b)\cap\{\Re\lambda>3b\}$ for some $0<b\leq\frac{\Re\lambda_0}{4}$. 
By Lemma \ref{lemma:Fredholm}, there exists $0\neq W_b\in\mathrm{Ker}(L_b-\lambda_b)$.
Since $L_b$ is invariant in each $U_{jn}$, the functions $W_{b,jn}(R)e^{ijn \theta}$ are also eigenfunctions. Furthermore, since $\mathrm{Ker}(L_b-\lambda_b)$ is finite dimensional, $W_{b,jn}$ is null
for all but a finite number of $j$'s. For $j=0$, since $L_b W_{b,0}=\lambda_b W_{b,0}$ reads as
$$
b(a+R\partial_R)W_{b,0}=\lambda_b W_{b,0},
$$
necessarily
$W_{b,0}=0$. Therefore, there exists 
$
0\neq W_{b,jn}(r)e^{ijn\theta}\in\mathrm{Ker}(L_b-\lambda_b)\cap U_{jn}
$
for some $j\neq 0$.

\subsection{Growth bound}
In the \hyperref[sec:ProofthmLb]{Proof of Theorem \ref{thm:Lb}} we have seen the existence of an eigenvalue $\lambda\in\sigma(L_b)\cap\{\Re\lambda>3b\}$. 
In this section we select the eigenvalue with largest real part, and we compute the growth bound of $L_b$. In the section about the nonlinear instability we will need the estimate for the growth of the semigroup norm. 

\begin{prop}\label{prop:GrowthBound}
There exists $\lambda\in\sigma(L_b)\cap\{\Re\lambda>3b\}$ such that $\omega_0(L_b)=s(L_b)=\Re\lambda$.
Therefore, for any $\delta>0$ there exists $C_\delta\geq 1$ such that
$$
\|e^{s L_b}\|_{\mathcal{L}}
\leq C_\delta e^{(\Re\lambda+\delta)s},
$$
for all $s\geq 0$.
\end{prop}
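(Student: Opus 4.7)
The plan is to combine the three ingredients already assembled in this section: the existence of at least one eigenvalue in $\sigma(L_b)\cap\{\Re\lambda>3b\}$ from the \hyperref[sec:ProofthmLb]{Proof of Theorem \ref{thm:Lb}}, the bound on the essential spectrum from Lemma \ref{lemma:essentialbound}, and the abstract characterization $\omega_0(L_b)=\max\{\omega_{\text{ess}}(L_b),s(L_b)\}$ from Proposition \ref{prop:growthbound}. The first step is to note that, since $3b>(a-1)b\geq\omega_{\text{ess}}(L_b)$, the second part of Proposition \ref{prop:growthbound} guarantees that $\sigma(L_b)\cap\{\Re\lambda>3b\}$ is a \emph{finite} set; being nonempty (by Theorem \ref{thm:Lb}), it admits an element $\lambda$ of maximal real part.

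The second step is to identify $\Re\lambda$ with the spectral bound $s(L_b)$. By definition $s(L_b)\geq\Re\lambda$. Conversely, any $\mu\in\sigma(L_b)$ either belongs to $\sigma(L_b)\cap\{\Re\lambda>3b\}$, in which case $\Re\mu\leq\Re\lambda$ by maximality, or else satisfies $\Re\mu\leq 3b<\Re\lambda$. In both cases $\Re\mu\leq\Re\lambda$, so $s(L_b)=\Re\lambda$. Invoking Proposition \ref{prop:growthbound} together with Lemma \ref{lemma:essentialbound},
\begin{equation*}
\omega_0(L_b)=\max\{\omega_{\text{ess}}(L_b),s(L_b)\}=\max\{(a-1)b,\Re\lambda\}=\Re\lambda,
\end{equation*}
since $\Re\lambda>3b\geq 0\geq(a-1)b$.

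The final step is the semigroup estimate, which is essentially the definition of $\omega_0(L_b)$ in \eqref{eq:growthbound}. For any $\delta>0$, the value $\Re\lambda+\delta$ is strictly larger than the infimum $\omega_0(L_b)=\Re\lambda$, so there exists $C_\delta\geq 1$ such that $\|e^{sL_b}\|_{\mathcal{L}}\leq C_\delta e^{(\Re\lambda+\delta)s}$ for all $s\geq 0$. I do not expect any substantive obstacle here: all the analytical work has been done in Lemmas \ref{lemma:Tinv}--\ref{lemma:essentialbound} and in the \hyperref[sec:ProofthmLb]{Proof of Theorem \ref{thm:Lb}}, and the proposition is a bookkeeping consequence of finiteness of the spectrum above the essential bound. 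The only minor point to keep in mind is that we need $\lambda$ to be the one eigenvalue realizing $s(L_b)$ (not just some eigenvalue above $3b$), which is why the maximum over the finite set is taken in the first step.
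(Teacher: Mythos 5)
Your proof is correct and follows essentially the same route as the paper, which cites the same three ingredients (existence of an eigenvalue with $\Re\lambda>3b$ from Theorem~\ref{thm:Lb}, Lemma~\ref{lemma:essentialbound}, and Proposition~\ref{prop:growthbound}) in a one-line argument; you have simply spelled out the bookkeeping. One tiny imprecision: the chain $\max\{\omega_{\text{ess}}(L_b),s(L_b)\}=\max\{(a-1)b,\Re\lambda\}$ tacitly replaces $\omega_{\text{ess}}(L_b)$ by its upper bound $(a-1)b$, which is not an equality, but since both are $<\Re\lambda$ the final conclusion $\omega_0(L_b)=\Re\lambda$ is unaffected.
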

\begin{proof} By Section \ref{sec:ProofthmLb} we know that there exists $\lambda_b\in\sigma(L_b)\cap\{\Re\lambda>3b\}$. 
Hence, by combining Proposition  \ref{prop:growthbound} and Lemma \ref{lemma:essentialbound}, we deduce the statement.
\end{proof}

\subsection{The eigenfunction}

In this section we prove that the eigenfunction associated to the eigenvalue $\lambda$ appearing in Proposition \ref{prop:GrowthBound} is smooth and compactly supported. Here we see, once again,  the advantages of writing the Rayleigh stability equation in vorticity form and dealing with vortices with compact support.

\begin{prop}\label{prop:eigenfunction}
Let $W\in\mathrm{Ker}(L_b-\lambda)\cap U_{jn}$ with $j\neq 0$. Then, 
$$
W\in C^\infty(\R^2\setminus\{0\})
\cap C_c^\gamma(\R^2),
$$
with
$\gamma =(\frac{\Re\lambda}{b}-a)\geq 2$. 
\end{prop}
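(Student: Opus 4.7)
The plan is to reduce the eigenvalue equation $L_b W = \lambda W$ with $W = W_m(R)e^{im\theta} \in U_{jn}$ (where $m = jn \neq 0$) to a first-order linear ODE in $R$, then analyze this ODE outside and inside the support of $\bar{\Omega}$.

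First I would derive the ODE. Using $L_b = ab + T_b + K$, with $\bar{V}_b = \bar{V}_\theta e_\theta - bRe_R$ and the formula of Lemma~\ref{lemma:vr:eigenfunction} adapted to frequency $m$, the eigenvalue equation unfolds into
\begin{equation*}
bR\, W_m'(R) = \left(\lambda - ab + \frac{im\,\bar{V}_\theta(R)}{R}\right) W_m(R) + i\, \bar{W}'(R)\, I(R),
\qquad I(R) = \int_0^\infty K_m\!\left(\tfrac{R}{S}\right) W_m(S)\dif S.
\end{equation*}
Here $\bar{V}_\theta$ and $\bar{W}'$ have compact support, and $\bar V_\theta/R$ is smooth at the origin since $\bar V_\theta \sim cR/2$ there.

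Next I would extract the compact support. Let $R_0 = r_2+\varepsilon$ contain $\mathrm{supp}(\bar\Omega)$. For $R>R_0$ both $\bar V_\theta$ and $\bar W'$ vanish, so the ODE degenerates to $bRW_m'=(\lambda - ab)W_m$, whose solutions are $W_m(R)=CR^{\lambda/b-a}$. Since $\Re\lambda>3b$ and $a\leq 1$, the weighted integrand $|W_m|^2 R$ grows like $R^{2\gamma+1}$ with $\gamma>2$, hence is not integrable at infinity. The Plancherel identity \eqref{eq:L2n} forces $C=0$, so $W_m\equiv 0$ on $(R_0,\infty)$.

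For smoothness away from the origin, I would bootstrap the ODE on $(0,\infty)$. The coefficient $\bar V_\theta/R$ is smooth, and $I\in C^\infty(0,\infty)$ by splitting the integral at $S=R$, differentiating under the integral sign (the boundary contributions cancel because $K_m$ is continuous at $1$), and iteratively using the regularity gained on $W_m$. Standard ODE theory then gives $W_m \in C^\infty(0,\infty)$. The only delicate matching is at $R=R_0$, where $W_m$ must glue to the zero solution outside; since $\bar V_\theta, \bar W'\in C^\infty$ and identically vanish on $[R_0,\infty)$, all their derivatives at $R_0$ are zero, and an easy induction (differentiate the ODE $N$ times at $R_0$ using Leibniz) shows $W_m^{(k)}(R_0)=0$ for every $k$. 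Therefore the zero extension is $C^\infty$ across $R_0$, proving $W\in C^\infty(\R^2\setminus\{0\})$.

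Finally, for $C^\gamma$ regularity at the origin (the main obstacle), I would use that $\bar W'\equiv 0$ and $\bar V_\theta(R)/R\equiv c/2$ for $R$ small (since $\bar w\equiv c$ and $\bar v_\theta\equiv cR/2$ on $[0,r_1-\varepsilon]$). The ODE then collapses to the homogeneous, exactly solvable equation $bRW_m'=(\lambda-ab+imc/2)W_m$, giving
$$W_m(R) = A\, R^{\lambda/b - a + imc/(2b)}, \qquad R\in[0, r_1-\varepsilon].$$
The pointwise bound $|W_m(R)|\leq |A|R^\gamma$ and the $k$-th derivative estimate $|D^k(|X|^{\gamma+i\mu}(X/|X|)^m)|\lesssim |X|^{\gamma-k}$, together with $\gamma>2$, give $W\in C^\gamma$ at the origin in the Hölder sense; combined with the compact support and the $C^\infty$ regularity elsewhere, this yields $W\in C_c^\gamma(\R^2)$. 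The main technical step is this last one: showing that the oscillatory factor $R^{imc/(2b)}$ does not spoil the Hölder estimate, which reduces to the straightforward fact that differentiating $|X|^{\gamma+i\mu}$ drops the power by one while only introducing the bounded multiplicative factor $|\gamma+i\mu|$.
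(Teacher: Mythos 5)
Your proposal follows essentially the same route as the paper's proof: reduce the eigenvalue equation to a first-order (integro-)differential equation in $R$, obtain the explicit power-law solutions on $(0,r_1-\varepsilon)$ and $(r_2+\varepsilon,\infty)$, rule out the outer branch via $L^2$, and bootstrap in the transition region. The one real technical difference is how the bootstrap is run on the integral term: the paper first performs the change of variables $S\mapsto R/S$, which puts $R$ only inside $W_{jn}(R/S)$ so that $\partial_R$ passes under the integral with no boundary terms at all, giving a clean $H^k$ induction; you keep the original kernel $K_m(R/S)$ and split at $S=R$. That also works, but your parenthetical ``the boundary contributions cancel because $K_m$ is continuous at $1$'' is only true at the first derivative. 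Because $K_m$ is merely Lipschitz (its derivative jumps at $\rho=1$), the boundary terms at second and higher order do \emph{not} cancel; they contribute terms of the form $\tfrac{c}{R}W_m(R)$, $\tfrac{c}{R}W_m'(R)$, and so on. These are harmless --- they are of lower order and are controlled by the regularity gained at the previous step, which is what your phrase ``iteratively using the regularity gained'' implicitly relies on --- but as written the justification misstates why the scheme closes. If you keep your variant, say explicitly that the non-cancelling boundary terms at order $k$ involve only derivatives of $W_m$ up to order $k-1$ and $R$ bounded away from zero; or simply adopt the paper's substitution, which removes the issue. The rest (vanishing to infinite order at $R_0=r_2+\varepsilon$ by mollification, and the H\"older estimate for $|X|^{\gamma+i\mu}(X/|X|)^{jn}$ near the origin) matches the paper and is correct.
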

\begin{proof}
The self-similar Rayleigh stability equation \eqref{eq:RSE:b} can be rewritten as
\begin{equation}\label{eq:RSE:eigenfunction}
\left((\lambda-ab)+ijn\frac{\bar{V}_\theta}{R}-bR\partial_R\right)W_{jn} +i\partial_R\bar{W}\int_0^\infty K_{jn}(S)
W_{jn}\left(\frac{R}{S}\right)\frac{\dif S}{S^2}
=0.
\end{equation}
We remark that \eqref{eq:RSE:eigenfunction} is well defined since $W\in D(L_b)$ by construction.
On the interval $(0,r_1-\varepsilon)$, since $\bar{V}_\theta=\frac{cR}{2}$ and $\partial_R\bar{W}=0$, we deduce that
$$
W_n(R)
=C_0
R^{\frac{2(\lambda-ab)+ijn c}{2b}},
$$
for some constant $C_0$. On the interval $(r_2+\varepsilon,\infty)$,  
since $\bar{V}_\theta=\partial_R\bar{W}=0$, we deduce that
$$
W_n(R)
=C_1
R^{\frac{\lambda-ab}{b}}.
$$
In this case, since $W_n\in L^2$ and $\Re\lambda>3b$, necessarily $C_1=0$. As a consequence, we have
$\mathrm{supp}(W_n)\subset[0,r_2+\varepsilon]$. 
We have checked that $W_n$ is smooth outside the interval $B_\varepsilon([r_1,r_2])$. Let us check that it is smooth inside the interval $I_\varepsilon=B_{2\varepsilon}([r_1,r_2])$.
Notice that the last integrand in \eqref{eq:RSE:eigenfunction} is supported on $S\geq \frac{R}{r_2+\varepsilon}$.
Hence, it follows from \eqref{eq:RSE:eigenfunction} that $W_n\in H^1(I_\varepsilon)$. By bootstrapping, the same formula allows to prove that $W_n\in H^k(I_\varepsilon)$ for any $k\geq 1$.
\end{proof}

\begin{Rem}
It is straightforward to check that, by substituting the constant in front of $b$ in equation \eqref{eq:PropLb} and also adjusting Proposition \ref{prop:GrowthBound}, we can achieve $\gamma\geq k$ for any given $k\in\N$. 
Since $k=2$ is sufficient to prove Theorem \ref{thm:nonlinear}, we opt to keep it that way for the sake of simplicity.
\end{Rem}

\section{Nonlinear instability}\label{sec:nonlinear}

In this section we prove Theorem \ref{thm:nonlinear}. 
To this end, for any $k\in\N$, we consider the unique solution $\Omega_k^{\text{cor}}$ to \eqref{eq:Euler:cor} coupled with the initial condition
\begin{equation}\label{eq:Omegakcor0}
\Omega_k^{\text{cor}}|_{\tau=-k}=0.
\end{equation}
The existence and uniqueness of this solution is guaranteed by the Yudovich theory. More precisely, in Eulerian coordinates we have that
$$
\omega_{\epsilon,k}=\omega_0
+\epsilon\omega^{\text{lin}}
+\epsilon^2\omega^{\text{cor}}_k,
$$
is a solution to the Euler equation \eqref{eq:Euler} with the smooth initial condition
\begin{equation}\label{eq:omegak}
\omega_{\epsilon,k}|_{t=t_k}=(\omega_0+\epsilon\omega^{\text{lin}})(t_k)
=\frac{1}{abt_k}(\bar{\Omega}+\epsilon\Omega^{\text{lin}}(-k)),
\end{equation}
where $t_k=e^{-abk}$.

\begin{Rem}\label{rem:smoothness} 
On the one hand, recall that $\bar{\Omega}$ and $\bar{V}$ are smooth,  compactly supported, and radially symmetric. In particular, $\bar{V}\in L^2$. On the other hand, recall that $\Omega^{\text{lin}}=\Re(e^{\lambda\tau}W)$ with $W\in L_n^2\cap  C_c^2$ by Proposition \ref{prop:eigenfunction}. In particular, $V^{\text{lin}}\in L^2$ by Lemma \ref{lemma:VDVL2}.
Since the Euler equation is invariant under rotations and the initial data \eqref{eq:omegak} are $n$-fold symmetric, the uniqueness implies that necessarily $\omega_{\epsilon,k}$ (and thus also $\omega_{k}^{\text{cor}}$) are $n$-fold symmetric as well.
\end{Rem}

Thus, our task consists in showing that Theorem \ref{thm:nonlinear} holds for 
$\Omega^{\text{cor}}_k$ uniformly in $k$.
As we will see, the use of the Duhamel formula allows gaining extra exponential decay, but at the cost of having to control the bound under a stronger norm.
Following \cite{ABCDGMKpp}, we introduce the subspace $Y$ of vorticities $\Omega\in L_n^2$ satisfying
$$\Vert\Omega\Vert_Y:=\Vert\Omega\Vert_{L^2}+\Vert|X|\nabla\Omega\Vert_{L^2}+\Vert\nabla\Omega\Vert_{L^4}<\infty.$$
In \cite[Proposition 5.0.2]{ABCDGMKpp} it is proven that  $Y$ satisfies certain properties that allow closing the energy estimates. Due to the compact support of our vortices, some of them can be omitted, namely the control of the decay. In the following proposition we present a simpler version with the minimum necessary bounds, and provide a shorter proof for the sake of completeness.
\begin{prop}\label{propABCDGMKpp}
There exists $C>0$ such that
$$
\|\Omega\|_{L^\infty}
+
\left\|\frac{1}{|X|}|V|+|DV|\right\|_{L^\infty}\leq C\|\Omega\|_Y,
$$
for any $\Omega\in Y$, where $V=\nabla^\perp\Delta^{-1}\Omega$. Moreover, $V\cdot\nabla \Omega\in L_n^2$.
\end{prop}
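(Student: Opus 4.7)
The plan is to prove the two $L^\infty$ bounds first via Calder\'on--Zygmund theory combined with Gagliardo--Nirenberg interpolation, then deduce the $\bigl\||V|/|X|\bigr\|_{L^\infty}$ bound from $n$-fold symmetry, and finally read off the product estimate from the weighted term built into $\|\cdot\|_Y$.

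For $\|\Omega\|_{L^\infty}$ I would apply the planar Gagliardo--Nirenberg inequality
$$
\|\Omega\|_{L^\infty}\lesssim \|\Omega\|_{L^2}^{1/3}\|\nabla\Omega\|_{L^4}^{2/3},
$$
whose right-hand side is controlled by $\|\Omega\|_Y$. For $\|DV\|_{L^\infty}$ I would use that $D^2V$ is a Calder\'on--Zygmund transform of $\nabla\Omega$, so $\|D^2V\|_{L^4}\lesssim\|\nabla\Omega\|_{L^4}$, combine this with $\|DV\|_{L^2}\lesssim\|\Omega\|_{L^2}$ already recorded in Lemma~\ref{lemma:VDVL2}, and rerun the same interpolation with $DV$ in place of $\Omega$. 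Once $DV\in L^\infty$ the velocity $V$ is continuous, and the $n$-fold symmetry relation $V(R_\theta X)=R_\theta V(X)$ from \eqref{eq:Vnfold} evaluated at $X=0$ forces $V(0)=0$ for $n\geq 2$. Writing $V(X)=\int_0^1 DV(tX)\cdot X\,dt$ therefore yields $\bigl\||V|/|X|\bigr\|_{L^\infty}\leq\|DV\|_{L^\infty}$.

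With these three bounds in hand, the weighted term in $Y$ is designed precisely to close the product estimate,
$$
\|V\cdot\nabla\Omega\|_{L^2}\leq \bigl\||V|/|X|\bigr\|_{L^\infty}\bigl\||X|\nabla\Omega\bigr\|_{L^2}\lesssim\|\Omega\|_Y^2,
$$
and the $n$-fold symmetry of $V\cdot\nabla\Omega$ follows from the compatible transformation laws of $V$ and $\nabla\Omega$ under $R_\theta$. The step I expect to require the most care is the remaining zero-mean condition \eqref{eq:U0} needed for $L_n^2$ membership. Writing $V\cdot\nabla\Omega=\mathrm{div}(V\Omega)$ and averaging in $\theta$, the $0$-th angular Fourier coefficient satisfies $R(V\cdot\nabla\Omega)_0(R)=\partial_R\overline{RV_r\Omega}$, reducing the task to vanishing of the boundary term. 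At $R=0$ this is immediate from $V(0)=0$ and $\Omega\in L^\infty$; at $R=\infty$ I would use an annular cutoff argument, invoking Lemma~\ref{lemma:VDVL2} to get $\|V\|_{L^2(B_{2R})}\lesssim R\|\Omega\|_{L^2}$ while $\|\Omega\|_{L^2(B_{2R}\setminus B_R)}\to 0$ as $R\to\infty$, so the shell contribution vanishes in the limit.
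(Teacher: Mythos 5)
Your proposal follows essentially the same route as the paper: obtain the two $L^\infty$ bounds from Calder\'on--Zygmund theory plus a Sobolev-type interpolation, deduce $V(0)=0$ from the $n$-fold symmetry, integrate $DV$ along rays to get $|V(X)|\leq\|DV\|_{L^\infty}|X|$, and close the product estimate with the weighted $L^2$ term built into $\|\cdot\|_Y$. The only real difference in the $L^\infty$ steps is cosmetic: you invoke the Gagliardo--Nirenberg inequality $\|u\|_{L^\infty}\lesssim\|u\|_{L^2}^{1/3}\|\nabla u\|_{L^4}^{2/3}$ (valid in $\R^2$ since $4>2$), whereas the paper uses a Poincar\'e inequality on unit balls together with Morrey's embedding to first show $Y\hookrightarrow W^{1,4}\hookrightarrow L^\infty$; these are interchangeable tools and both are correct.

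Where you go beyond the paper is in flagging the zero-mean condition \eqref{eq:U0}: the paper only verifies the $n$-fold symmetry of $V\cdot\nabla\Omega=\mathrm{div}(V\Omega)$ via \eqref{eq:VOmeganfold} and leaves the zero-mean part implicit. Your identity $R(V\cdot\nabla\Omega)_0(R)=\partial_R\bigl(R\,\overline{V_r\Omega}\bigr)$ is correct and the $R=0$ boundary term does vanish from $V(0)=0$. However, the sketch at $R=\infty$ is not quite closed: the bounds $\|V\|_{L^2(B_{2R})}\lesssim R\|\Omega\|_{L^2}$ and $\|\Omega\|_{L^2(B_{2R}\setminus B_R)}\to0$ control the shell averages $\int_R^{2R}\overline{V_r\Omega}(S)\dif S$, which forces $R\,\overline{V_r\Omega}(R)$ to be small along a subsequence, but by itself does not give $R\,\overline{V_r\Omega}(R)\to0$. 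One would still need to either combine this with the existence of the limit (using that $(V\cdot\nabla\Omega)_0R\in L^1$ near infinity, which requires more than $L^2$), or argue by density on compactly supported $\Omega$ (which is all that is used in the applications). Since the paper itself glosses over this point, this is a minor technical caveat rather than a gap in your approach, and the core of your proof is sound.
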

\begin{proof}
We start by proving that 
\begin{equation}\label{eq:OmegaLinfty}
\|\Omega\|_{L^\infty}\leq C\|\Omega\|_{Y}.
\end{equation}
Given some fixed  $X\in\R^2$, let us denote by $B_X$ the unit ball centered at $X$, and by $(\Omega)_{B_X}$ the average value of $\Omega$ in $B_X$. By applying the Poincar\'e and H\"older inequalities, we deduce that
\begin{align*}
\|\Omega\|_{L^4(B_X)}
&\leq\|\Omega-(\Omega)_{B_X}\|_{L^4(B_X)}
+|B|^{\nicefrac{1}{4}}|(\Omega)_{B_X}|\\
&\leq C\|\nabla\Omega\|_{L^4(B_X)}
+|B_X|^{-\nicefrac{1}{4}}\|\Omega\|_{L^2(B_X)}
\leq C\|\Omega\|_Y,
\end{align*}
where $C$ is a constant that changes from line to line, but it is independent of $X$. Hence, by applying the Morrey inequality we get
$$
\|\Omega\|_{L^\infty(B_X)}
\leq C\|\Omega\|_{W^{1,4}(B_X)}
\leq C\|\Omega\|_Y.
$$
Since this estimate holds in every ball $B_X$, we conclude \eqref{eq:OmegaLinfty}. As a corollary, by applying the log-convexity of the $L^p$-norms, we deduce that
$$
\|\Omega\|_{L^p}
\leq C\|\Omega\|_Y,
$$
for any $2\leq p\leq \infty$. In particular, for $p=4$ this implies that $Y\hookrightarrow W^{1,4}$.
Therefore, by applying first the Morrey inequality and the Calderon-Zygmund theory, we get
$$
\|DV\|_{L^\infty}
\leq C\|DV\|_{W^{1,4}}
\leq C\|\Omega\|_{W^{1,4}}
\leq C\|\Omega\|_Y.
$$
In particular, $V$ is continuous. Therefore, letting $R\to 0$ in \eqref{eq:intV=0}, we observe that $V(0)=0$.
Consequently, by integrating $DV$ on the segment with endpoints $0$ and $X$ we get
$$|V(X)|\leq C |X|\|\Omega\|_Y.$$
For the last statement, we have
\begin{equation}\label{eq:VDOmegaL2}
\|V\cdot\nabla\Omega\|_{L^2}
\leq\left\|\frac{1}{|X|}V\right\|_{L^\infty}
\||X|\nabla\Omega\|_{L^2}\leq
\|\Omega\|_Y^2.
\end{equation}
It remains to check that $V\cdot\nabla\Omega=\mathrm{div}(V\Omega)$ is $n$-fold symmetric. By applying \eqref{eq:nfold} and \eqref{eq:Vnfold}, we get
\begin{equation}\label{eq:VOmeganfold}
\mathrm{div}(V\Omega)(X)
=\mathrm{div}(e^{-\frac{2\pi i}{n}}(V\Omega)(e^{\frac{2\pi i}{n}} X))
=\mathrm{div}(V\Omega)(e^{\frac{2\pi i}{n}} X),
\end{equation}
for any $X\in\R^2$.
\end{proof}

Since $\omega_0$, $\omega^{\text{lin}}$ and the forcing term $f$ are smooth and compactly supported for any $t\geq t_k$, it is classical that $\omega_{\epsilon,k}$ (and thus also $\omega_k^{\text{cor}}$) enjoy the same properties.
Due to \eqref{eq:Omegakcor0}, given some fixed $0<\delta_0\leq\frac12\Re\lambda$,  we can define $-k<\tau_k\leq\infty$ to be the largest time such that
\begin{equation}\label{kdependentbound}
\Vert\Omega^{\text{cor}}_k(\tau)\Vert_Y\leq e^{(\Re\lambda+\delta_0)\tau},
\end{equation}
for any $-k\leq\tau\leq\tau_k$.
Our task is to show that the times $\tau_k$ satisfying \eqref{kdependentbound} do not diverge to $-\infty$ as $k\to\infty$. If $\tau_k$ remained positive, we were done. Hence, let us assume from now on that $\tau_k\leq 0$ for all $k$ sufficiently large. 
Thus, in particular,
\begin{equation}\label{kdependentequality}
\Vert\Omega^{\text{cor}}_k(\tau_k)\Vert_Y= e^{(\Re\lambda+\delta_0)\tau_k}.
\end{equation}
Firstly, we will  improve the bound \eqref{kdependentbound} by using the following lemma. Its proof is similar to that of \cite[Lemma 5.0.4]{ABCDGMKpp} and it is the content of our Sections \ref{baselineL2estimate} and \ref{derivativeestimates}.
\begin{lemma}\label{lemma:nonlineark}
There exists $C>0$ independent of $k$ such that 
\begin{equation}\label{eq:nonlineark}
\Vert\Omega^{\text{\rm cor}}_k(\tau)\Vert_Y\leq C e^{\frac{1}{2}(3\Re\lambda+\delta_0)\tau},
\end{equation}
for all $-k\leq\tau\leq\tau_k$.
\end{lemma}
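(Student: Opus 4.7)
The plan is to treat \eqref{eq:Euler:cor} as the linear inhomogeneous equation
$$
\partial_\tau\Omega_k^{\text{cor}}
=L_b\Omega_k^{\text{cor}}+\mathcal F,
\quad
\mathcal F
=-(V^{\text{lin}}+\epsilon V^{\text{cor}})\cdot\nabla(\Omega^{\text{lin}}+\epsilon\Omega^{\text{cor}}),
$$
and close a two-level estimate: an $L^2$ bound via Duhamel and the semigroup $e^{sL_b}$, plus direct energy estimates for the remaining pieces of the $Y$-norm. By Proposition \ref{propABCDGMKpp} the quadratic term $V\cdot\nabla\Omega\in L_n^2$ when $\Omega\in Y$, so $\mathcal F(\tau)$ makes sense as an $L_n^2$-valued forcing and the Duhamel representation
$$
\Omega_k^{\text{cor}}(\tau)=\int_{-k}^\tau e^{(\tau-s)L_b}\mathcal F(s)\dif s
$$
is justified thanks to the initial condition \eqref{eq:Omegakcor0} and Remark \ref{rem:smoothness}, which ensures $\Omega_k^{\text{cor}}\in L_n^2$.

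First I would expand $\mathcal F$ into its four bilinear pieces and estimate each in $L^2$ using Proposition \ref{propABCDGMKpp} (which gives $\|V\cdot\nabla\Omega'\|_{L^2}\lesssim\|\Omega\|_Y\|\Omega'\|_Y$), together with the known bound $\|\Omega^{\text{lin}}(s)\|_Y\leq C e^{\Re\lambda\,s}$ coming from $\Omega^{\text{lin}}=\Re(e^{\lambda\tau}W)$ with $W\in Y$, and the bootstrap assumption \eqref{kdependentbound}. This yields
$$
\|\mathcal F(s)\|_{L^2}
\leq C\bigl(e^{2\Re\lambda\,s}+\epsilon e^{(2\Re\lambda+\delta_0) s}+\epsilon^2 e^{2(\Re\lambda+\delta_0) s}\bigr),
\quad s\leq\tau_k\leq 0.
$$
Applying Proposition \ref{prop:GrowthBound} with some $0<\delta<\tfrac{1}{2}(\Re\lambda-\delta_0)$ and integrating over $[-k,\tau]$, the Duhamel integral is controlled by the slowest-decaying source, giving
$$
\|\Omega_k^{\text{cor}}(\tau)\|_{L^2}
\leq C e^{(\frac{3\Re\lambda+\delta_0}{2})\tau},
\qquad -k\leq\tau\leq\tau_k,
$$
with $C$ independent of $k$ (the lower endpoint contributes nothing because $e^{-(\text{rate})\,k}\to 0$).

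The main obstacle is that Proposition \ref{prop:GrowthBound} only controls the $L^2$ norm, whereas the $Y$ norm involves $\||X|\nabla\Omega\|_{L^2}$ and $\|\nabla\Omega\|_{L^4}$. For these I would not use semigroup theory but instead do direct energy estimates on the equation, exploiting that $L_b=T_b+K+ab$ with $T_b$ a Lipschitz transport operator. Differentiating \eqref{eq:Euler:cor}, multiplying by the appropriate power of $|X|^2\nabla\Omega_k^{\text{cor}}$ or $|\nabla\Omega_k^{\text{cor}}|^2\nabla\Omega_k^{\text{cor}}$, integrating by parts, and absorbing commutators using $\|D\bar V_b\|_{L^\infty}\lesssim 1$ together with the bounds on $V^{\text{lin}},V^{\text{cor}},\Omega^{\text{lin}},\Omega^{\text{cor}}$ furnished by Proposition \ref{propABCDGMKpp} and \eqref{kdependentbound}, one obtains a Gronwall-type inequality of the form
$$
\tfrac{d}{d\tau}\bigl(\||X|\nabla\Omega_k^{\text{cor}}\|_{L^2}^2+\|\nabla\Omega_k^{\text{cor}}\|_{L^4}^4\bigr)
\leq C\bigl(\|\Omega_k^{\text{cor}}\|_Y^2+\text{source terms}\bigr),
$$
where the source terms decay at rate $e^{2\Re\lambda\tau}$. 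Plugging in the just-proved $L^2$ estimate and solving the ODE inequality on $[-k,\tau]$ yields the same $e^{\frac{1}{2}(3\Re\lambda+\delta_0)\tau}$ growth rate for the full $Y$ norm, which is \eqref{eq:nonlineark}. The most delicate point will be the $L^4$-based part, since the Biot-Savart operator only enjoys the bound $\|DV\|_{L^p}\lesssim\|\Omega\|_{L^p}$ for $1<p<\infty$ (via Calder\'on--Zygmund, as used in Proposition \ref{propABCDGMKpp}) and the commutator $[\nabla,T_b]$ must be handled carefully to avoid losing a derivative; the smoothness and compact support of $\bar V_b$ away from $0$ and the control on $\bar V_b$ near $0$ (Lipschitz with divergence $-2b$) are exactly what allows the commutator terms to be absorbed.
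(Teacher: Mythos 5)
Your $L^2$ step is essentially the paper's (Duhamel against the semigroup with Proposition \ref{prop:GrowthBound}), though note your own Duhamel computation in fact yields the sharper rate $e^{2\Re\lambda\,\tau}$ used in Lemma \ref{lemma:irstinequality}, not the weaker $e^{\frac{1}{2}(3\Re\lambda+\delta_0)\tau}$ you write; the sharper rate is needed downstream when interpolating $\|DV_k^{\text{cor}}\|_{L^4}\leq \|DV_k^{\text{cor}}\|_{L^2}^{1/2}\|DV_k^{\text{cor}}\|_{L^\infty}^{1/2}$, so be careful not to throw it away.

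The genuine gap is in your plan for the derivative part of the $Y$-norm. You propose to take Cartesian derivatives of \eqref{eq:Euler:cor} and ``absorb the commutator $[\nabla,T_b]$ using $\|D\bar V_b\|_{L^\infty}\lesssim 1$.'' This cannot work. The commutator $[\partial_j,\bar V\cdot\nabla]\Omega=\partial_j\bar V\cdot\nabla\Omega$ is not a lower-order source term: it is $O(1)\cdot\nabla\Omega_k^{\text{cor}}$, i.e.\ exactly the quantity you are estimating, multiplied by $\|D\bar V\|_{L^\infty}$, which is a fixed constant of the vortex with no smallness and no relation to $\Re\lambda$. The resulting Gr\"onwall inequality has the form
$$
\frac{d}{d\tau}\|\nabla\Omega_k^{\text{cor}}\|_{L^2}\leq \|D\bar V\|_{L^\infty}\|\nabla\Omega_k^{\text{cor}}\|_{L^2}+\text{(decaying source)},
$$
and, since the source decays at rate $e^{2\Re\lambda\tau}$ while the Gr\"onwall coefficient $\|D\bar V\|_{L^\infty}$ may well exceed $2\Re\lambda$, integration from $\tau=-k$ gives a bound proportional to $e^{(\|D\bar V\|_{L^\infty}-2\Re\lambda)k}$, which blows up as $k\to\infty$. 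So the estimate is not uniform in $k$ and the bootstrap closes nowhere; compact support of $\bar V$ and the divergence $-2b$ do not rescue this, because the problematic term comes from $D\bar V$, not from the unbounded stretching $-bX$.

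The missing idea is the paper's polar-coordinate trick. Since $\bar\Omega$ is a vortex, $\bar V\cdot\nabla=\frac{\bar V_\theta(R)}{R}\partial_\theta$ commutes with $\partial_\theta$, so the angular derivative $\partial_\theta\Omega_k^{\text{cor}}$ satisfies a transport equation with \emph{no} commutator against $\bar V\cdot\nabla$; the only commutators come from $\mathcal V$ (which carries the small factor $\epsilon$ and decays) and from $bX\cdot\nabla$ (which only shifts the coefficient by $b(a-1)\leq 0$). This yields the angular estimates of Lemma \ref{lemma:DthetaOmega}. Only then does one differentiate in $R$: the commutator $[\partial_R,\bar V\cdot\nabla]=(\partial_R\bar V_\theta-\tfrac{\bar V_\theta}{R})\tfrac{\partial_\theta}{R}$ produces precisely the angular derivative, which is already controlled with the right decay, so the radial estimate of Lemma \ref{lemma:DrOmega} closes. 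This ordered two-step decomposition (angular first, radial second) is what breaks the circularity; your proposal, by working in Cartesian derivatives, never sees this structure and the $O(1)$ commutator cannot be tamed.
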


\subsection{Proof of Theorem \ref{thm:nonlinear}}
\label{sec:Proofnonlinear}
In this section we show how Lemma \ref{lemma:nonlineark} allows proving Theorem \ref{thm:nonlinear}.
Notice that \eqref{kdependentequality} together with \eqref{eq:nonlineark} implies $C e^{\frac{1}{2}(\Re\lambda-\delta_0) \tau_k}\ge 1$. Thus, we can take 
$$
\bar{\tau}
=\inf_k\tau_k\ge -\frac{2\log C}{\Re\lambda-\delta_0}>-\infty.
$$
In Eulerian coordinates, the classical solutions $\omega_{\epsilon,k}$ satisfy, for all $t\geq t_k$, the bound
\begin{align*}
\|\omega_{\epsilon,k}(t)\|_{L^p}
&\leq
\|\omega_{\epsilon,k}(t_k)\|_{L^p}
+\int_{t_k}^t\|f(s)\|_{L^p}\dif s,
\end{align*}
and similarly the corresponding velocities $v_{\epsilon,k}$ satisfy
\begin{align*}
\|v_{\epsilon,k}(t)\|_{L^2}
\leq
\|v_{\epsilon,k}(t_k)\|_{L^2}
+\int_{t_k}^t\|g(s)\|_{L^2}\dif s.
\end{align*}
By the scaling identities \eqref{eq:scalingLpw0}, \eqref{eq:scalingLpf} and Remark \ref{rem:smoothness}, the sequence of vorticities and velocities are uniformly bounded in $L_t^\infty (L^1\cap L^p)$ and $L_t^\infty L^2$, respectively. In this case, it is classical (see e.g.~\cite[Chapter 10]{MajdaBertozzi02}) that this sequence converges (by taking a subsequence if necessary) to a weak solution of the Euler equation \eqref{eq:Euler}.
Finally, by applying the sequential weak lower semicontinuity of the $L^p$-norm, the embedding $L^p\hookrightarrow L^2\cap L^\infty\hookrightarrow Y$ (recall Proposition \ref{propABCDGMKpp}), and \eqref{kdependentbound}, we get
$$
\|\Omega^{\text{cor}}(\tau)\|_{L^p}
\leq
\liminf_{k\to\infty}
\|\Omega_k^{\text{cor}}(\tau)\|_{L^p}
\leq e^{(\Re\lambda+\delta_0)\tau},
$$
for all $\tau\leq\bar{\tau}$.

\subsection{Baseline $L^2$ estimate}\label{baselineL2estimate}
In this section we deal with the $L^2$  part of the $Y$-norm  for the proof of Lemma~\ref{lemma:nonlineark}.
\begin{lemma}\label{lemma:irstinequality}
There exists $C>0$ such that
\begin{equation}\label{Lemmafirstinequality}
\Vert\Omega^{\text{\rm cor}}_k(\tau)\Vert_{L^2}\leq C e^{2\Re\lambda\tau},
\end{equation}
for all $-k\leq \tau\leq \tau_k$.
\end{lemma}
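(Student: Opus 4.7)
The natural approach is to combine Duhamel's formula with the semigroup growth bound from Proposition~\ref{prop:GrowthBound}. Since $\Omega^{\text{cor}}_k|_{\tau=-k}=0$, the equation \eqref{eq:Euler:cor} is equivalent to
$$
\Omega^{\text{cor}}_k(\tau)=-\int_{-k}^\tau e^{(\tau-s)L_b}\mathcal{F}(s)\,ds,
\qquad
\mathcal{F}=(V^{\text{lin}}+\epsilon V^{\text{cor}}_k)\cdot\nabla(\Omega^{\text{lin}}+\epsilon\Omega^{\text{cor}}_k).
$$
Fixing any $0<\delta<\Re\lambda$, Proposition~\ref{prop:GrowthBound} yields $\|e^{sL_b}\|_{\mathcal{L}}\le C_\delta e^{(\Re\lambda+\delta)s}$ for $s\ge 0$, hence
$$
\|\Omega^{\text{cor}}_k(\tau)\|_{L^2}\le C_\delta\int_{-k}^\tau e^{(\Re\lambda+\delta)(\tau-s)}\|\mathcal{F}(s)\|_{L^2}\,ds.
$$

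The next step is an $L^2$ estimate on $\mathcal{F}(s)$. I would expand $\mathcal{F}$ into its four contributions; each has the shape $V^A\cdot\nabla\Omega^B$, and the bilinear form of \eqref{eq:VDOmegaL2} in Proposition~\ref{propABCDGMKpp} gives
$$
\|V^A\cdot\nabla\Omega^B\|_{L^2}\le\|V^A/|X|\|_{L^\infty}\||X|\nabla\Omega^B\|_{L^2}\le C\|\Omega^A\|_Y\|\Omega^B\|_Y.
$$
Since $\Omega^{\text{lin}}=\Re(e^{\lambda s}W)$ with $W\in C_c^2$ by Proposition~\ref{prop:eigenfunction}, one has $\|\Omega^{\text{lin}}(s)\|_Y\le Ce^{\Re\lambda s}$, while the bootstrap hypothesis \eqref{kdependentbound} provides $\|\Omega^{\text{cor}}_k(s)\|_Y\le e^{(\Re\lambda+\delta_0)s}$. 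Plugging these into the four terms yields respective bounds $Ce^{(2\Re\lambda+j\delta_0)s}$ with $j\in\{0,1,2\}$. Because $s\le\tau_k\le 0$, the factor $e^{\delta_0 s}\le 1$, so all four contributions are dominated by $Ce^{2\Re\lambda s}$; that is, $\|\mathcal{F}(s)\|_{L^2}\le Ce^{2\Re\lambda s}$.

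Inserting this into the Duhamel estimate and evaluating explicitly,
$$
\|\Omega^{\text{cor}}_k(\tau)\|_{L^2}
\le CC_\delta e^{(\Re\lambda+\delta)\tau}\int_{-k}^\tau e^{(\Re\lambda-\delta)s}\,ds
\le\frac{CC_\delta}{\Re\lambda-\delta}\,e^{2\Re\lambda\tau},
$$
which is \eqref{Lemmafirstinequality}. The $s$-integral converges uniformly in $k$ thanks to the choice $\delta<\Re\lambda$, so passing $k\to\infty$ would be harmless.

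The main technical point is ensuring that the $L^2$ bound on the mixed terms $V^{\text{lin}}\cdot\nabla\Omega^{\text{cor}}_k$ and $V^{\text{cor}}_k\cdot\nabla\Omega^{\text{lin}}$ is genuinely controlled by the $Y$-norms of the two factors separately; this is exactly the role of the pointwise bound $\|V/|X|\|_{L^\infty}\le C\|\Omega\|_Y$ in Proposition~\ref{propABCDGMKpp}, paired with the weighted $L^2$ control $\||X|\nabla\Omega\|_{L^2}\le\|\Omega\|_Y$. A secondary subtlety is keeping the gain from $\Re\lambda$ to $2\Re\lambda$ intact: because $\delta_0\le\tfrac{1}{2}\Re\lambda$ we have $\Re\lambda>\delta_0$, so the linear exponent $2\Re\lambda$ genuinely beats $\Re\lambda+\delta_0$, and Duhamel then delivers the desired $e^{2\Re\lambda\tau}$ decay without any loss.
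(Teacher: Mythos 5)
Your proposal is correct and follows essentially the same route as the paper: Duhamel from the zero data at $\tau=-k$, the semigroup growth bound of Proposition~\ref{prop:GrowthBound} with a fixed $\delta<\Re\lambda$, the nonlinear $L^2$ estimate on $\mathcal{F}$ via \eqref{eq:VDOmegaL2} together with the bootstrap hypothesis \eqref{kdependentbound} and $\tau_k\le 0$, and then the explicit $s$-integral. The only cosmetic difference is that you expand $\mathcal{F}$ into its four bilinear pieces before estimating, whereas the paper bounds $\|\mathcal{F}\|_{L^2}\le C\|\Omega^{\text{lin}}+\epsilon\Omega^{\text{cor}}_k\|_Y^2$ in one stroke; both give the same $Ce^{2\Re\lambda s}$ bound.
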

\begin{proof}
Firstly, recall that
$\Omega_k^{\text{cor}}$ satisfies equation \eqref{eq:Euler:cor}
with forcing 
$$\mathcal{F}(s)=(V^{\text{lin}}+\epsilon V^{\text{cor}})\cdot\nabla(\Omega^{\text{lin}}+\epsilon\Omega^{\text{cor}})(s)\in L_n^2,$$
by Proposition \ref{propABCDGMKpp} and Remark \ref{rem:smoothness}.
Hence, by exploiting the Duhamel formula
$$
\Omega_k^{\text{cor}}(\tau)
=-\int_{-k}^{\tau}e^{(\tau-s)L_b}\mathcal{F}(s)\dif s,
$$
together with Proposition \ref{prop:GrowthBound},
we estimate,
for some fixed $0<\delta\leq\frac{\Re\lambda}{2}$ and $C=C_\delta\geq 1$,
\begin{equation}\label{DuhamelforOmegacor}
\Vert\Omega^{\text{\rm cor}}_k(\tau)\Vert_{L^2}\leq C\int_{-k}^{\tau} e^{(\Re\lambda+\delta)(\tau-s)}\Vert\mathcal{F}(s)\Vert_{L^2}\dif s.
\end{equation}
On the one hand, $\Omega^{\text{lin}}=\Re(e^{\lambda \tau}W)$. On the other hand, $\Omega_k^{\text{cor}}$ satisfies the estimate \eqref{kdependentbound}. Thus, by applying \eqref{eq:VDOmegaL2}, we obtain the bound
$$\Vert \mathcal{F}(s)\Vert_{L^2}
\leq C\|(\Omega^{\text{lin}}+\epsilon\Omega_k^{\text{cor}})(s)\|_Y^2
\leq Ce^{{2\Re\lambda s}},$$
for all $-k\leq s\leq\tau_k$. Thus, \eqref{DuhamelforOmegacor} yields
$$\Vert\Omega^{\text{\rm cor}}_k(\tau)\Vert_{L^2}\leq C  e^{(\Re\lambda+\delta)\tau}\int_{-k}^{\tau} e^{{(\Re\lambda-\delta)s}}\dif\tau\leq C e^{{2\Re\lambda\tau}},$$
for all $-k\leq \tau\leq\tau_k$.
\end{proof}

\subsection{Energy estimates on derivatives}\label{derivativeestimates}

With \eqref{Lemmafirstinequality} proven we now proceed to show that
\begin{align}
\Vert |X|D\Omega^{\text{\rm cor}}_k(\tau_k)\Vert_{L^2}
&\leq C e^{2\Re\lambda\tau_k},\label{Lemmasecondinequality}\\
\Vert D\Omega^{\text{\rm cor}}_k(\tau_k)\Vert_{L^4}
&\leq C e^{\frac{1}{2}(3\Re\lambda+\delta_0)\tau},\label{Lemmathirdinequality}
\end{align}
for all $-k\leq \tau\leq \tau_k$.
These estimates appear in Lemmas \ref{lemma:DthetaOmega} and \ref{lemma:DrOmega}.
Recalling  \eqref{LbExpression}, we rewrite \eqref{eq:Euler:cor} as
\begin{equation}\label{nonlineareq}
\partial_\tau \Omega^{\text{cor}}_k - ab\Omega^{\text{cor}}_k+(\bar{V}-bX+\mathcal{V})\cdot\nabla\Omega^{\text{cor}}_k
+\mathcal{G}
=0,
\end{equation}
where we have abbreviated
\begin{align*}
\mathcal{V}
&:=\epsilon V^{\text{lin}}+\epsilon^2 V^{\text{cor}}_k,\\
\mathcal{G}
&:=V^{\text{lin}}\cdot\nabla\Omega^{\text{lin}}
+\epsilon  V^{\text{cor}}_k\cdot\nabla\Omega^{\text{lin}}+V^{\text{cor}}_k\cdot\nabla\bar{\Omega}.
\end{align*}
Throughout the energy estimates, we will need to control  the norms appearing in \eqref{Lemmasecondinequality}\eqref{Lemmathirdinequality} for the terms involving $\mathcal{V}$ and $\mathcal{G}$. These bounds appear in the next two lemmas.

\begin{lemma}\label{lemma:Claim:V}
There exists $C>0$ such that
\begin{align*}
\left\||X|\left(\frac{1}{|X|}|\mathcal{V}|+|D\mathcal{V}|\right)|D\Omega^{\text{cor}}_k|(\tau)\right\|_{L^2}
&\leq Ce^{(2\Re\lambda+\delta_0)\tau},\\
\left\|\left(\frac{1}{|X|}|\mathcal{V}|+|D\mathcal{V}|\right)|D\Omega^{\text{cor}}_k|(\tau)\right\|_{L^4}
&\leq Ce^{(2\Re\lambda+\delta_0)\tau},
\end{align*}
for all $-k\leq\tau\leq\tau_k$.
\end{lemma}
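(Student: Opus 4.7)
The plan is to factor each norm as an $L^\infty$ bound on $\tfrac{1}{|X|}|\mathcal{V}|+|D\mathcal{V}|$ against a norm of $D\Omega_k^{\text{cor}}$ controlled by $\Vert\Omega_k^{\text{cor}}\Vert_Y$, and then to use that both ingredients are already known in the right exponential scale.

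First I would bound the $L^\infty$ norm of $\tfrac{1}{|X|}|\mathcal{V}|+|D\mathcal{V}|$. Writing $\mathcal{V}=\epsilon V^{\text{lin}}+\epsilon^2 V_k^{\text{cor}}$, Proposition \ref{propABCDGMKpp} applied to $\Omega^{\text{lin}}=\Re(e^{\lambda\tau}W)$ (with $W\in C_c^2\cap L_n^2$, hence $W\in Y$) gives
\[
\left\Vert\tfrac{1}{|X|}|V^{\text{lin}}|+|DV^{\text{lin}}|\right\Vert_{L^\infty}\leq C\Vert\Omega^{\text{lin}}\Vert_Y\leq C e^{\Re\lambda\,\tau},
\]
while applying the same proposition to $\Omega_k^{\text{cor}}$ together with the bootstrap hypothesis \eqref{kdependentbound} yields
\[
\left\Vert\tfrac{1}{|X|}|V_k^{\text{cor}}|+|DV_k^{\text{cor}}|\right\Vert_{L^\infty}\leq C\Vert\Omega_k^{\text{cor}}\Vert_Y\leq C e^{(\Re\lambda+\delta_0)\tau}.
\]
Using $\tau\leq\tau_k\leq 0$ so that $e^{\delta_0\tau}\leq 1$, these two bounds combine into
\[
\left\Vert\tfrac{1}{|X|}|\mathcal{V}|+|D\mathcal{V}|\right\Vert_{L^\infty}\leq C\bigl(\epsilon\,e^{\Re\lambda\,\tau}+\epsilon^2 e^{(\Re\lambda+\delta_0)\tau}\bigr)\leq C e^{\Re\lambda\,\tau}.
\]

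Next I would H\"older each estimate against this $L^\infty$ bound. For the first inequality,
\[
\left\Vert|X|\bigl(\tfrac{1}{|X|}|\mathcal{V}|+|D\mathcal{V}|\bigr)|D\Omega_k^{\text{cor}}|\right\Vert_{L^2}\leq \left\Vert\tfrac{1}{|X|}|\mathcal{V}|+|D\mathcal{V}|\right\Vert_{L^\infty}\bigl\Vert|X|\,D\Omega_k^{\text{cor}}\bigr\Vert_{L^2},
\]
and by definition of the $Y$-norm together with \eqref{kdependentbound},
\[
\bigl\Vert|X|\,D\Omega_k^{\text{cor}}\bigr\Vert_{L^2}\leq\Vert\Omega_k^{\text{cor}}\Vert_Y\leq e^{(\Re\lambda+\delta_0)\tau}.
\]
Multiplying by $Ce^{\Re\lambda\tau}$ gives the claimed bound $Ce^{(2\Re\lambda+\delta_0)\tau}$. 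For the second inequality the same factorization with $\Vert D\Omega_k^{\text{cor}}\Vert_{L^4}\leq\Vert\Omega_k^{\text{cor}}\Vert_Y\leq e^{(\Re\lambda+\delta_0)\tau}$ produces the identical right-hand side.

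There is no real obstacle: the content of the lemma is just the observation that, under the bootstrap assumption \eqref{kdependentbound}, the combined velocity $\mathcal{V}$ is bounded in $L^\infty$ (with weight $|X|^{-1}$) and $W^{1,\infty}$ by $Ce^{\Re\lambda\tau}$, which is exactly the linear scale, and that all derivative norms of $\Omega_k^{\text{cor}}$ appearing on the right are part of the $Y$-norm and therefore already controlled by $e^{(\Re\lambda+\delta_0)\tau}$. The only small point to check is that the corrector contribution $\epsilon^2 V_k^{\text{cor}}$ is absorbed into the linear one; this uses $\tau\leq 0$ (equivalently $e^{\delta_0\tau}\leq 1$) and, if desired, smallness of $\epsilon$.
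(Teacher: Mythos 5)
Your proof is correct and matches the paper's (very terse) argument, which likewise just invokes Proposition \ref{propABCDGMKpp} to get the $L^\infty$ control on $\tfrac{1}{|X|}|\mathcal{V}|+|D\mathcal{V}|$ in the scale $e^{\Re\lambda\tau}$ and reads the remaining $L^2$ and $L^4$ factors directly off the $Y$-norm bootstrap bound \eqref{kdependentbound}. You have simply supplied the details that the paper leaves implicit.
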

\begin{proof}
It follows by applying Proposition \ref{propABCDGMKpp}.
Recall that $\Omega^{\text{lin}}$ is smooth, compactly supported, and proportional to $e^{\lambda t}$, and that $\Omega_k^{\text{cor}}$ satisfies the bound \eqref{kdependentbound}.
\end{proof}

\begin{lemma}\label{lemma:Claim:G}
There exists $C>0$ such that
\begin{align}
\Vert |X|D\mathcal{G}(\tau)\Vert_{L^2}
&\leq Ce^{2\Re\lambda\tau},\label{Claimeq2}\\
\Vert D\mathcal{G}(\tau)\Vert_{L^4}
&\leq Ce^{\tfrac{1}{2}(3\Re\lambda+\delta_0)\tau},\label{Claimeq1}
\end{align}
for all $-k\leq\tau\leq\tau_k$.
\end{lemma}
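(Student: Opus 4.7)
The plan is to apply the Leibniz rule to each of the three summands of
\[
\mathcal{G}=V^{\text{lin}}\cdot\nabla\Omega^{\text{lin}}+\epsilon V^{\text{cor}}_k\cdot\nabla\Omega^{\text{lin}}+V^{\text{cor}}_k\cdot\nabla\bar{\Omega},
\]
and to bound the six resulting pieces of $D\mathcal{G}$ separately. The structural observation to exploit is that every such piece contains at least one factor drawn from $\{\nabla\Omega^{\text{lin}},\nabla D\Omega^{\text{lin}},\nabla\bar{\Omega},\nabla D\bar{\Omega}\}$, each of which is smooth and supported in a fixed ball depending only on $\bar{\Omega}$ and $W$. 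In particular, the weight $|X|$ appearing in \eqref{Claimeq2} acts only on a bounded region and can be absorbed into the constant; each estimate then reduces to a local bound on the remaining factor.

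For the purely linear piece $D(V^{\text{lin}}\cdot\nabla\Omega^{\text{lin}})$, I would use that $\Omega^{\text{lin}}=\Re(e^{\lambda\tau}W)$ with $W\in C_c^2$ by Proposition \ref{prop:eigenfunction}, hence $\|\Omega^{\text{lin}}(\tau)\|_Y\lesssim e^{\Re\lambda\tau}$, and invoke Lemma \ref{lemma:VDVL2} together with Proposition \ref{propABCDGMKpp} to bound $V^{\text{lin}}$ and $DV^{\text{lin}}$ on any fixed compact set by $Ce^{\Re\lambda\tau}$. Multiplying these against the compactly supported factors $\nabla\Omega^{\text{lin}}$ and $\nabla D\Omega^{\text{lin}}$, each of size $e^{\Re\lambda\tau}$ in every $L^q$, gives a contribution of order $e^{2\Re\lambda\tau}$ in both $\||X|\cdot\|_{L^2}$ and $\|\cdot\|_{L^4}$. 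Since $\tau\le 0$ and $\Re\lambda+\delta_0\ge 0$, one has $e^{2\Re\lambda\tau}\le e^{\frac{1}{2}(3\Re\lambda+\delta_0)\tau}$, so the bound is admissible for both \eqref{Claimeq2} and \eqref{Claimeq1}.

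The main work lies in the two mixed terms $D(\epsilon V^{\text{cor}}_k\cdot\nabla\Omega^{\text{lin}})$ and $D(V^{\text{cor}}_k\cdot\nabla\bar{\Omega})$. I would combine the Calderon-Zygmund bounds $\|DV^{\text{cor}}_k\|_{L^p}\lesssim\|\Omega^{\text{cor}}_k\|_{L^p}$ for $p=2,4$, the ball estimate $\|V^{\text{cor}}_k\|_{L^2(B_R)}\lesssim\|\Omega^{\text{cor}}_k\|_{L^2}$ from Lemma \ref{lemma:VDVL2}, and the pointwise bounds $|V^{\text{cor}}_k(X)|\lesssim|X|\|\Omega^{\text{cor}}_k\|_Y$ and $\|DV^{\text{cor}}_k\|_{L^\infty}\lesssim\|\Omega^{\text{cor}}_k\|_Y$ from Proposition \ref{propABCDGMKpp}. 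For the weighted $L^2$ bound \eqref{Claimeq2}, pairing these with the baseline estimate \eqref{Lemmafirstinequality}, $\|\Omega^{\text{cor}}_k\|_{L^2}\lesssim e^{2\Re\lambda\tau}$, and with the $L^\infty$ size $Ce^{\Re\lambda\tau}$ of $\nabla\Omega^{\text{lin}}$, $\nabla D\Omega^{\text{lin}}$ on their fixed supports (respectively with the uniform bound on $\nabla\bar{\Omega}$, $\nabla D\bar{\Omega}$) directly yields $Ce^{2\Re\lambda\tau}$. For the $L^4$ bound \eqref{Claimeq1}, I would instead use the $Y$-bound \eqref{kdependentbound}, $\|\Omega^{\text{cor}}_k\|_Y\le e^{(\Re\lambda+\delta_0)\tau}$, combined with $\|\nabla\Omega^{\text{lin}}\|_{L^4}\lesssim e^{\Re\lambda\tau}$, producing $Ce^{(2\Re\lambda+\delta_0)\tau}\le Ce^{\frac{1}{2}(3\Re\lambda+\delta_0)\tau}$ for $\tau\le 0$.

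I do not expect a serious obstacle here: the compact support of both $W$ and $\bar{\Omega}$ confines every nonlinear interaction to a fixed ball, sparing the delicate far-field tracking of $\bar{V}-bX$ that burdens the analogous lemma in \cite{ABCDGMKpp}. The only nontrivial bookkeeping is verifying that the various exponential rates generated by the Leibniz rule sit below the declared right-hand sides for $\tau\le\tau_k\le 0$, which reduces to the inequality $\Re\lambda+\delta_0\ge 0$ used repeatedly above.
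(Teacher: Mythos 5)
Your Leibniz-rule decomposition and your bounds for $D(V^{\text{lin}}\cdot\nabla\Omega^{\text{lin}})$ and $D(\epsilon V_k^{\text{cor}}\cdot\nabla\Omega^{\text{lin}})$ are fine, and the weighted $L^2$ estimate \eqref{Claimeq2} goes through exactly as you describe. The gap is in the $L^4$ bound \eqref{Claimeq1} for the term $V_k^{\text{cor}}\cdot\nabla\bar{\Omega}$, which the paper identifies as the ``worst behaved'' one precisely because $\bar{\Omega}$ is time-independent and contributes no $e^{\Re\lambda\tau}$ factor. Your stated rate $Ce^{(2\Re\lambda+\delta_0)\tau}$ comes from multiplying the $Y$-bound $\|\Omega_k^{\text{cor}}\|_Y\le e^{(\Re\lambda+\delta_0)\tau}$ by the size $e^{\Re\lambda\tau}$ of $\nabla\Omega^{\text{lin}}$; but when you pair $DV_k^{\text{cor}}$ (or $V_k^{\text{cor}}$) against $\nabla\bar{\Omega}$ (or $\nabla D\bar{\Omega}$), the second factor is only $O(1)$, so your tools give at best $Ce^{(\Re\lambda+\delta_0)\tau}$. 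Since $\delta_0\le\frac12\Re\lambda<\Re\lambda$, one has $\Re\lambda+\delta_0 < \frac12(3\Re\lambda+\delta_0)$, hence $e^{(\Re\lambda+\delta_0)\tau}\ge e^{\frac12(3\Re\lambda+\delta_0)\tau}$ for $\tau\le 0$, which is the \emph{wrong} direction: this rate is too slow and does not prove \eqref{Claimeq1}.

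The missing idea is an interpolation between the two available bounds on $DV_k^{\text{cor}}$: write
\[
\|DV_k^{\text{cor}}\|_{L^4}\le \|DV_k^{\text{cor}}\|_{L^2}^{1/2}\,\|DV_k^{\text{cor}}\|_{L^\infty}^{1/2},
\]
and then use the baseline estimate $\|DV_k^{\text{cor}}\|_{L^2}\lesssim\|\Omega_k^{\text{cor}}\|_{L^2}\lesssim e^{2\Re\lambda\tau}$ on the first factor and the $Y$-bound $\|DV_k^{\text{cor}}\|_{L^\infty}\lesssim\|\Omega_k^{\text{cor}}\|_Y\le e^{(\Re\lambda+\delta_0)\tau}$ on the second; the geometric mean is exactly $e^{\frac12(3\Re\lambda+\delta_0)\tau}$. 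The analogous interpolation $\|V_k^{\text{cor}}\|_{L^4(B_{\bar R})}\le\|V_k^{\text{cor}}\|_{L^2(B_{\bar R})}^{1/2}\|V_k^{\text{cor}}\|_{L^\infty(B_{\bar R})}^{1/2}$ handles $V_k^{\text{cor}}\cdot\nabla D\bar{\Omega}$. This is the single nontrivial step in the paper's proof, and you need it: neither the baseline $L^2$ rate nor the $Y$-norm rate alone is sufficient for \eqref{Claimeq1} on this term.
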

\begin{proof}
We have 
\begin{align*}
\partial_j\mathcal{G
}&= \partial_jV^{\text{lin}}\cdot\nabla\Omega^{\text{lin}}
+\epsilon  \partial_jV^{\text{cor}}_k\cdot\nabla\Omega^{\text{lin}}+\partial_jV^{\text{cor}}_k\cdot\nabla\bar{\Omega}\\
&+   V^{\text{lin}}\cdot\nabla\partial_j\Omega^{\text{lin}}
+\epsilon  V^{\text{cor}}_k\cdot\nabla\partial_j\Omega^{\text{lin}}+V^{\text{cor}}_k\cdot\nabla\partial_j\bar{\Omega}.
\end{align*}
We proceed to bound the worst behaved terms. These are the terms involving both $V_k^{\text{cor}}$ and $\bar{\Omega}$.
Recall that $\bar{\Omega}$ (and $\Omega^{\text{lin}}$) are smooth and supported on a ball of radius $\bar{R}$.
On the one hand, by applying Lemma \ref{lemma:VDVL2} coupled with the baseline $L^2$ estimate (Lemma \ref{lemma:irstinequality}), we get
$$
\frac{1}{\bar{R}}\|V_k^{\text{cor}}\|_{L^2(B_{\bar{R}})}+\|DV_k^{\text{cor}}\|_{L^2}\leq C\|\Omega_k^{\text{cor}}\|_{L^2}
\leq Ce^{2\Re\lambda\tau}.
$$
Therefore, we deduce that
\begin{align*}
\||X|DV_k^{\text{cor}}D\bar{\Omega}\|_{L^2}
\leq
\bar{R}\|DV_k^{\text{cor}}\|_{L^2}
\|D\bar{\Omega}\|_{L^\infty}
\leq Ce^{2\Re\lambda\tau},\\
\||X|V_k^{\text{cor}}D^2\bar{\Omega}\|_{L^2}
\leq
\bar{R}\|V_k^{\text{cor}}\|_{L^2(B_{\bar{R}})}
\|D^2\bar{\Omega}\|_{L^\infty}
\leq Ce^{2\Re\lambda\tau},
\end{align*}
and, by additionally applying Proposition \ref{propABCDGMKpp} coupled with \eqref{kdependentbound} and the baseline estimate , we get
\begin{align*}
\|DV_k^{\text{cor}}D\bar{\Omega}\|_{L^4}
\leq\|DV_k^{\text{cor}}\|_{L^2}^{\nicefrac{1}{2}}
\|DV_k^{\text{cor}}\|_{L^\infty}^{\nicefrac{1}{2}}
\|D\bar{\Omega}\|_{L^\infty}
\leq C e^{\tfrac{1}{2}(3\Re\lambda+\delta_0)\tau},\\
\|V_k^{\text{cor}}D^2\bar{\Omega}\|_{L^4}
\leq\|V_k^{\text{cor}}\|_{L^2(B_{\bar{R}})}^{\nicefrac{1}{2}}
\|V_k^{\text{cor}}\|_{L^\infty(B_{\bar{R}})}^{\nicefrac{1}{2}}
\|D^2\bar{\Omega}\|_{L^\infty}
\leq C e^{\tfrac{1}{2}(3\Re\lambda+\delta_0)\tau}.
\end{align*}
The remaining terms are bounded similarly and have the same (or faster) decay.
\end{proof}

Notice that until now we have neglected the terms $\bar{V}$ and $X$.  In fact, these terms are very problematic since they do not give any extra decay. Following \cite{ABCDGMKpp}, we bypass this problem by estimating the derivatives with respect to the polar coordinates 
$$
D
=e_R\partial_R+e_\theta\frac{1}{R}\partial_\theta.
$$
More precisely, 
by noticing that
\begin{equation}\label{eq:VX}
\bar{V}\cdot\nabla\Omega^{\text{cor}}_k=\frac{\bar{V}_\theta(R)}{R}\partial_\theta\Omega^{\text{cor}}_k,
\quad\quad
X\cdot\nabla\Omega^{\text{cor}}_k=R\partial_R\Omega^{\text{cor}}_k,
\end{equation}
it becomes convenient to perform first energy estimates with respect to the angular derivative. This allows to gain an extra decay on $\partial_\theta\Omega^{\text{cor}}_k$, and then close the energy estimates for  $\partial_R\Omega^{\text{cor}}_k$.

\subsubsection{Estimates on the angular derivative}

\begin{lemma}\label{lemma:DthetaOmega} 
There exists $C>0$ such that
\begin{align}
\Vert \partial_\theta\Omega^{\text{cor}}_k(\tau)\Vert_{L^2}
&\leq Ce^{2\Re\lambda\tau},\label{AngularDerivativeBound1}\\
\left\Vert \frac{1}{R}\partial_\theta\Omega^{\text{cor}}_k(\tau)\right\Vert_{L^4}
&\leq Ce^{\tfrac{1}{2}(3\Re\lambda + \delta_0)\tau},\label{AngularDerivativeBound2}
\end{align}
for all $-k\leq\tau\leq\tau_k$.
\end{lemma}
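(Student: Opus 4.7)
\textbf{Proof proposal for Lemma \ref{lemma:DthetaOmega}.}
The plan is to derive an equation for $u=\partial_\theta\Omega^{\text{cor}}_k$, perform an $L^2$ energy estimate to obtain \eqref{AngularDerivativeBound1}, and then an $L^4$ estimate on $v=u/R$ to obtain \eqref{AngularDerivativeBound2}. The key algebraic observation, which is the reason for differentiating in $\theta$ first, is that $\partial_\theta$ commutes with the problematic radial terms: by \eqref{eq:VX} we have $\bar{V}\cdot\nabla=\frac{\bar{V}_\theta(R)}{R}\partial_\theta$ and $X\cdot\nabla=R\partial_R$, both of which have radial coefficients. Hence applying $\partial_\theta$ to \eqref{nonlineareq} yields
\begin{equation*}
\partial_\tau u -abu +(\bar{V}-bX+\mathcal{V})\cdot\nabla u +\mathcal{H}=0,
\quad\quad
\mathcal{H}:=\partial_\theta\mathcal{V}\cdot\nabla\Omega^{\text{cor}}_k+\partial_\theta\mathcal{G},
\end{equation*}
with no extra commutator contributions from the radial transport. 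Throughout, I will use $|\partial_\theta f|\leq |X||Df|$ and $|\frac{1}{R}\partial_\theta f|\leq|Df|$, which follow from $\partial_\theta=R(e_\theta\cdot\nabla)$.

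For \eqref{AngularDerivativeBound1}, I multiply by $u$ and integrate. The divergence-free terms $\bar{V}\cdot\nabla$ and $\mathcal{V}\cdot\nabla$ contribute zero, while $-b\int X\cdot\nabla u\cdot u=b\|u\|_{L^2}^2$ (via $\mathrm{div}X=2$ and the compact support of $u$). This gives
\begin{equation*}
\tfrac{1}{2}\partial_\tau\|u\|_{L^2}^2+(1-a)b\|u\|_{L^2}^2\leq \|\mathcal{H}\|_{L^2}\|u\|_{L^2}.
\end{equation*}
Since $\|\partial_\theta\mathcal{V}\cdot\nabla\Omega^{\text{cor}}_k\|_{L^2}\leq\||X||D\mathcal{V}||D\Omega^{\text{cor}}_k|\|_{L^2}$ and $\|\partial_\theta\mathcal{G}\|_{L^2}\leq\||X|D\mathcal{G}\|_{L^2}$, Lemmas \ref{lemma:Claim:V} and \ref{lemma:Claim:G} give $\|\mathcal{H}\|_{L^2}\leq Ce^{2\Re\lambda\tau}$. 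Gr\"onwall with $u(-k)=0$ and the fact that the damping rate $(1-a)b\geq 0$ while $2\Re\lambda>6b>(a-1)b$ yield \eqref{AngularDerivativeBound1}.

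For \eqref{AngularDerivativeBound2}, let $v=u/R$ and compute the commutators in polar coordinates: $\bar{V}\cdot\nabla u=R\bar{V}\cdot\nabla v$ (so no change), $X\cdot\nabla u=R\cdot X\cdot\nabla v + Rv$, and $\mathcal{V}\cdot\nabla u=R\mathcal{V}\cdot\nabla v+\mathcal{V}_r v$. Dividing the equation for $u$ by $R$ one obtains
\begin{equation*}
\partial_\tau v -(a+1)bv+(\bar{V}-bX+\mathcal{V})\cdot\nabla v +\tfrac{\mathcal{V}_r}{R}v+\tfrac{\mathcal{H}}{R}=0.
\end{equation*}
Testing against $|v|^2v$ and using $\int X\cdot\nabla(v^4/4)=-\tfrac{1}{2}\|v\|_{L^4}^4$ together with the divergence-free property of $\bar{V}$ and $\mathcal{V}$ yields
\begin{equation*}
\partial_\tau\|v\|_{L^4}\leq \bigl((a+\tfrac{1}{2})b+C\|\mathcal{V}/|X|\|_{L^\infty}\bigr)\|v\|_{L^4}+C\|\mathcal{H}/R\|_{L^4}.
\end{equation*}
Proposition \ref{propABCDGMKpp} combined with $\|\Omega^{\text{lin}}\|_Y\leq Ce^{\Re\lambda\tau}$ and \eqref{kdependentbound} makes $\|\mathcal{V}/|X|\|_{L^\infty}$ exponentially decaying and hence time-integrable, while $\|\mathcal{H}/R\|_{L^4}\leq\|D\mathcal{V}\, D\Omega^{\text{cor}}_k\|_{L^4}+\|D\mathcal{G}\|_{L^4}\leq Ce^{\frac{1}{2}(3\Re\lambda+\delta_0)\tau}$ by Lemmas \ref{lemma:Claim:V} and \ref{lemma:Claim:G}. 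Since $\Re\lambda>3b$ forces $\frac{1}{2}(3\Re\lambda+\delta_0)>(a+\tfrac{1}{2})b$, Gr\"onwall from $v(-k)=0$ closes the estimate and delivers \eqref{AngularDerivativeBound2}.

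The main obstacles are bookkeeping ones: (i) verifying the commutator identity for $\mathcal{V}\cdot\nabla$ after dividing by $R$, where the radial component $\mathcal{V}_r/R$ appears as a new zeroth-order term that must be absorbed via the $L^\infty$ bound on $\mathcal{V}/|X|$; and (ii) ensuring the integration by parts with $X\cdot\nabla$ is legitimate, which relies on the compact support of $\Omega^{\text{cor}}_k$ noted in Remark \ref{rem:smoothness}. The strategy of differentiating in $\theta$ first exploits the crucial cancellation that $\bar{V}$ and $X$ carry no angular dependence, so $\partial_\theta$ produces no new terms containing these slow (non-decaying) vector fields.
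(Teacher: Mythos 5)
Your proposal is correct and follows essentially the same route as the paper: differentiate in $\theta$ to exploit that $\bar{V}\cdot\nabla$ and $X\cdot\nabla$ have purely radial coefficients and hence commute with $\partial_\theta$, run an $L^2$ energy estimate for $\partial_\theta\Omega^{\text{cor}}_k$, divide by $R$, and run an $L^4$ estimate for $\frac{1}{R}\partial_\theta\Omega^{\text{cor}}_k$. Two small remarks. First, the paper absorbs the commutator term $\frac{\mathcal{V}_R}{R}\cdot\frac{\partial_\theta\Omega^{\text{cor}}_k}{R}$ into the source $\mathcal{H}_2$ and bounds $\|\mathcal{H}_2\|_{L^4}$ directly via Lemma~\ref{lemma:Claim:V}, whereas you keep it as a time-dependent Gr\"onwall coefficient controlled by $\|\mathcal{V}/|X|\|_{L^\infty}\lesssim e^{\Re\lambda\tau}$; both work, your variant requires noting that $\int_{-k}^{0}e^{\Re\lambda s}\dif s$ is bounded uniformly in $k$. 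Second, your pointwise estimate $|\partial_\theta\mathcal{V}\cdot\nabla\Omega^{\text{cor}}_k|\leq |X||D\mathcal{V}||D\Omega^{\text{cor}}_k|$ is slightly imprecise: the commutator expressed through the polar components $\mathcal{V}_R,\mathcal{V}_\theta$ produces additional zeroth-order contributions ($\partial_\theta\mathcal{V}_R$ picks up $\mathcal{V}_\theta$ and $\partial_\theta\mathcal{V}_\theta$ picks up $-\mathcal{V}_R$), so the correct pointwise bound is $|X|\bigl(\tfrac{1}{|X|}|\mathcal{V}|+|D\mathcal{V}|\bigr)|D\Omega^{\text{cor}}_k|$. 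This is exactly what Lemma~\ref{lemma:Claim:V} controls, so your conclusion is unaffected, but the bookkeeping should reflect it.
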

\begin{proof}
For \eqref{AngularDerivativeBound1}, by differentiating \eqref{nonlineareq} with respect to $\theta$, we get
\begin{equation}\label{diffthetaeq}
\begin{split}
\partial_\tau\partial_\theta \Omega^{\text{cor}}_k - ab\partial_\theta\Omega^{\text{cor}}_k+(\bar{V}-bX+\mathcal{V})\cdot\nabla\partial_\theta\Omega^{\text{cor}}_k&\\
=
-\partial_\theta\mathcal{V}_R\partial_R\Omega_k^{\text{cor}}
-\partial_\theta\mathcal{V}_\theta\frac{\partial_\theta\Omega_k^{\text{cor}}}{R}
-\partial_\theta\mathcal{G}&=:\mathcal{H}_1,
\end{split}
\end{equation}
where we have applied \eqref{eq:VX}
$$
\partial_\theta(\bar{V}\cdot\nabla\Omega^{\text{cor}}_k)
=\bar{V}\cdot\nabla\partial_\theta\Omega^{\text{cor}}_k,
\quad\quad
\partial_\theta(X\cdot\nabla\Omega^{\text{cor}}_k)
=X\cdot\nabla\partial_\theta\Omega^{\text{cor}}_k,
$$
and also
$$
\partial_\theta(\mathcal{V}\cdot\nabla\Omega_k^{\text{cor}})
=\mathcal{V}\cdot\nabla\partial_\theta\Omega_k^{\text{cor}}
+\partial_\theta\mathcal{V}_R\partial_R\Omega_k^{\text{cor}}
+\partial_\theta\mathcal{V}_\theta\frac{\partial_\theta\Omega_k^{\text{cor}}}{R}.
$$
Multiplying \eqref{diffthetaeq} by $\partial_\theta\Omega^{\text{cor}}_k$ and integrating by parts yields
$$
\frac{1}{2}\partial_\tau\Vert \partial_\theta\Omega^{\text{cor}}_k\Vert_{L^2}^2=b(a-1)\Vert \partial_\theta\Omega^{\text{cor}}_k\Vert_{L^2}^2+\int_{\R^2}\mathcal{H}_1\partial_\theta\Omega^{\text{cor}}_k\dif X
\leq
\Vert \mathcal{H}_1\Vert_{L^2} \Vert \partial_\theta\Omega^{\text{cor}}_k\Vert_{L^2},
$$
where we have applied that $\mathrm{div}X=2$ and $a<1$. Thus,
$$
\Vert\partial_\theta\Omega^{\text{cor}}_k(\tau)\Vert_{L^2}
\leq\int_{-k}^{\tau}
\Vert\mathcal{H}_1(s)\Vert_{L^2}\dif s.
$$
Then, \eqref{AngularDerivativeBound1} follows by applying Lemmas \ref{lemma:Claim:V} and \ref{lemma:Claim:G} since
$$
\|\mathcal{H}_1(s)\|_{L^2}
\leq C e^{2\Re\lambda s},
$$
for all $-k\leq s\leq\tau_k$.
For \eqref{AngularDerivativeBound2}, multiplying \eqref{diffthetaeq} by $\frac{1}{R}$ yields
\begin{equation}\label{diffthetaeqr}
\begin{split}
\partial_\tau \frac{\partial_\theta\Omega^{\text{cor}}_k}{R} - b(a+ 1)\frac{\partial_\theta\Omega^{\text{cor}}_k}{R}+(\bar{V}-bX+\mathcal{V})\cdot\nabla\frac{\partial_\theta\Omega^{\text{cor}}_k}{R}&\\
=
\frac{1}{R}\mathcal{V}_R\frac{\partial_\theta\Omega^{\text{cor}}_k}{R}
+\frac{1}{R}\mathcal{H}_1&=:\mathcal{H}_2,
\end{split}
\end{equation}
where $\mathcal{V}_R=\mathcal{V}\cdot e_R$, and we have applied that $\bar{V}=\bar{V}_\theta e_\theta$, $X=Re_R$ and
$
\nabla\left(\frac{1}{R}\right)
=-\frac{1}{R^2}e_R.
$
After multiplying \eqref{diffthetaeqr} by $\left(\frac{1}{R}\partial_\theta\Omega^{\text{cor}}_k\right)^3$ and integrating by parts, we get
\begin{align*}
\frac{1}{4}\partial_\tau\left\Vert \frac{1}{R}\partial_\theta\Omega^{\text{cor}}_k\right\Vert_{L^4}^4
&=b\left(a+\frac{1}{2}\right)\left\Vert \frac{1}{R}\partial_\theta\Omega^{\text{cor}}_k\right\Vert_{L^4}^4+\int_{\R^2}\mathcal{H}_2\left(\frac{1}{R}\partial_\theta\Omega^{\text{cor}}_k\right)^3\dif X\\
&\leq b\left(a+\frac{1}{2}\right)\left\Vert \frac{1}{R}\partial_\theta\Omega^{\text{cor}}_k\right\Vert_{L^4}^4+\Vert\mathcal{H}_2\Vert_{L^4}\left\Vert \frac{1}{R}\partial_\theta\Omega^{\text{cor}}_k\right\Vert_{L^4}^3.
\end{align*}
Therefore,
\begin{equation}\label{difftheta1reqestimateobjective}
\left\Vert \frac{1}{R}\partial_\theta\Omega^{\text{cor}}_k(\tau)\right\Vert_{L^4}\leq\int_{-k}^{\tau}e^{b\left(a+\frac{1}{2}\right)(\tau-s)}\Vert \mathcal{H}_2(s)\Vert_{L^4}\dif s.
\end{equation}
Then \eqref{AngularDerivativeBound2} follows by applying Lemmas \ref{lemma:Claim:V} and \ref{lemma:Claim:G} since
$$
\|\mathcal{H}_2(s)\|_{L^4}
\leq Ce^{\tfrac{1}{2}(3\Re\lambda+\delta_0)s},
$$
for all $-k\leq s\leq\tau_k$.
Notice that we need 
$
b\left(a+\frac{1}{2}\right)\ll\frac{1}{2}(3\Re\lambda+\delta_0),
$
but this is satisfied since Proposition \ref{prop:GrowthBound} guarantees $\Re\lambda>3b$.
\end{proof}

\subsubsection{Estimates on the radial derivative}
\begin{lemma}\label{lemma:DrOmega} 
There exists $C>0$ such that
\begin{align}
\Vert R\partial_R\Omega^{\text{cor}}_k(\tau)\Vert_{L^2}
&\leq Ce^{2\Re\lambda\tau},\label{RadialDerivativeBound1}\\
\left\Vert \partial_R\Omega^{\text{cor}}_k(\tau)\right\Vert_{L^4}
&\leq Ce^{\tfrac{1}{2}(3\Re\lambda+\delta_0)\tau},\label{RadialDerivativeBound2}
\end{align}
for all  $-k\leq\tau\leq\tau_k$.
\end{lemma}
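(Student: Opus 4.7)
The proof of Lemma~\ref{lemma:DrOmega} closely follows the structure of Lemma~\ref{lemma:DthetaOmega}, with $\partial_\theta$ replaced by the radial derivatives $X\cdot\nabla=R\partial_R$ (for the $L^2$ bound on $F:=R\partial_R\Omega^{\mathrm{cor}}_k$) and by $\partial_R$ (for the $L^4$ bound on $\partial_R\Omega^{\mathrm{cor}}_k$). The new feature, and the main obstacle, is that neither radial derivative commutes with $\bar V\cdot\nabla$ or $\mathcal V\cdot\nabla$, so genuine commutator terms appear as extra forcing after differentiation. The plan is to compute these commutators via the identity $[X\cdot\nabla,Y\cdot\nabla]=(X\cdot\nabla Y-Y)\cdot\nabla$ (resp.\ its $\partial_R$-analogue) and control them using the smoothness and compact support of $\bar V$, the forcing estimates of Lemmas~\ref{lemma:Claim:V} and~\ref{lemma:Claim:G}, and, crucially, the already-established angular bounds \eqref{AngularDerivativeBound1}--\eqref{AngularDerivativeBound2}.

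For \eqref{RadialDerivativeBound1}, applying $X\cdot\nabla$ to \eqref{nonlineareq} yields
$$
\partial_\tau F - abF + (\bar V - bX + \mathcal V)\cdot\nabla F = \mathcal H_3,
$$
where the commutator with $-bX\cdot\nabla$ vanishes identically, the commutator with $\bar V\cdot\nabla$ equals $\frac{R\bar V_\theta'(R)-\bar V_\theta(R)}{R}\partial_\theta\Omega^{\mathrm{cor}}_k$ (a bounded, compactly supported multiplier of the angular derivative), and the commutator with $\mathcal V\cdot\nabla$ equals $(X\cdot\nabla\mathcal V-\mathcal V)\cdot\nabla\Omega^{\mathrm{cor}}_k$. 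The first piece is bounded in $L^2$ by $Ce^{2\Re\lambda s}$ via \eqref{AngularDerivativeBound1}; the second has the pointwise form $(|X||D\mathcal V|+|\mathcal V|)|D\Omega^{\mathrm{cor}}_k|$ and is covered by Lemma~\ref{lemma:Claim:V}; the remaining $X\cdot\nabla\mathcal G$ is controlled by \eqref{Claimeq2}. A standard $L^2$ energy estimate, using $\mathrm{div}(\bar V-bX+\mathcal V)=-2b$ and $a<1$, gives $\partial_\tau\|F\|_{L^2}\le (a-1)b\|F\|_{L^2}+\|\mathcal H_3\|_{L^2}$, which integrates to \eqref{RadialDerivativeBound1}.

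For \eqref{RadialDerivativeBound2}, applying $\partial_R$ instead yields
$$
\partial_\tau(\partial_R\Omega^{\mathrm{cor}}_k)-(a+1)b\,\partial_R\Omega^{\mathrm{cor}}_k+(\bar V-bX+\mathcal V)\cdot\nabla(\partial_R\Omega^{\mathrm{cor}}_k)=\mathcal H_4,
$$
where now $[\partial_R,-bX\cdot\nabla]=-b\partial_R$ shifts the zeroth-order coefficient from $-ab$ to $-(a+1)b$, and $\mathcal H_4$ gathers $-\partial_R(\bar V_\theta/R)\partial_\theta\Omega^{\mathrm{cor}}_k$, $-\partial_R\mathcal V_R\,\partial_R\Omega^{\mathrm{cor}}_k$, $-\partial_R(\mathcal V_\theta/R)\partial_\theta\Omega^{\mathrm{cor}}_k$, and $-\partial_R\mathcal G$. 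Multiplying by $(\partial_R\Omega^{\mathrm{cor}}_k)^3$ and integrating (the divergence contributes a further $b/2$) yields
$$
\partial_\tau\|\partial_R\Omega^{\mathrm{cor}}_k\|_{L^4}\le b\bigl(a+\tfrac{1}{2}\bigr)\|\partial_R\Omega^{\mathrm{cor}}_k\|_{L^4}+\|\mathcal H_4\|_{L^4}.
$$
Each piece of $\mathcal H_4$ is bounded in $L^4$ by $Ce^{\frac{3\Re\lambda+\delta_0}{2}s}$ (or faster): the $\bar V$-commutator by writing $\partial_R(\bar V_\theta/R)\partial_\theta\Omega^{\mathrm{cor}}_k=[R\partial_R(\bar V_\theta/R)]\cdot[\partial_\theta\Omega^{\mathrm{cor}}_k/R]$ with the bracket in $L^\infty$ and applying \eqref{AngularDerivativeBound2}; the $\mathcal V$-commutators similarly after invoking Proposition~\ref{propABCDGMKpp} to put $|D\mathcal V|$ and $|\mathcal V|/|X|$ in $L^\infty$ and combining with the $Y$-norm bootstrap \eqref{kdependentbound}; and $\partial_R\mathcal G$ via \eqref{Claimeq1}. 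Since Proposition~\ref{prop:GrowthBound} gives $\Re\lambda>3b$, we have $b(a+\tfrac12)<\tfrac{3\Re\lambda+\delta_0}{2}$, and Gr\"onwall yields \eqref{RadialDerivativeBound2}.

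The main obstacle is essentially bookkeeping: one must verify that every commutator arising when a radial derivative hits a transport operator is either zero (for $-bX$), a bounded multiplication operator times the angular derivative (for $\bar V$, thanks to smoothness and compact support), or falls into the exact structure of Lemma~\ref{lemma:Claim:V} (for $\mathcal V$). This is why \eqref{AngularDerivativeBound1}--\eqref{AngularDerivativeBound2} must be established first: the commutator residues are precisely the objects they bound, and without them the radial estimates cannot be closed.
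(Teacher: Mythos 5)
Your proposal is correct and matches the paper's argument essentially verbatim: the paper differentiates \eqref{nonlineareq} with $\partial_R$ to obtain an equation for $\partial_R\Omega^{\text{cor}}_k$ (its $\mathcal H_3$, your $\mathcal H_4$) and then multiplies by $R$ to obtain the equation for $R\partial_R\Omega^{\text{cor}}_k$ (its $\mathcal H_4$, your $\mathcal H_3$), whereas you apply $X\cdot\nabla$ and $\partial_R$ directly, but the commutator calculations you record — vanishing of $[X\cdot\nabla,X\cdot\nabla]$, the shift $[\partial_R,-bX\cdot\nabla]=-b\partial_R$, the $\bar V$-commutator $(\partial_R\bar V_\theta-\bar V_\theta/R)\,\partial_\theta\Omega^{\text{cor}}_k/R$, and $[X\cdot\nabla,\mathcal V\cdot\nabla]=(X\cdot\nabla\mathcal V-\mathcal V)\cdot\nabla$ — reproduce the paper's $\mathcal H_3,\mathcal H_4$ term by term. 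The subsequent energy estimates (multiply by $R\partial_R\Omega^{\text{cor}}_k$ or $(\partial_R\Omega^{\text{cor}}_k)^3$, use $\mathrm{div}(\bar V-bX+\mathcal V)=-2b$ and $a<1$, then integrate with the weight $e^{b(a+1/2)(\tau-s)}$ harmless since $\Re\lambda>3b$) and the invocation of Lemmas \ref{lemma:Claim:V}, \ref{lemma:Claim:G} and \ref{lemma:DthetaOmega} are exactly what the paper does.
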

\begin{proof}
For \eqref{RadialDerivativeBound1}, by differentiating \eqref{nonlineareq} with respect to $R$, we get
\begin{equation}\label{diffreq}
\begin{split}
\partial_\tau\partial_R \Omega^{\text{cor}}_k - b(a+1)\partial_R\Omega^{\text{cor}}_k+(\bar{V}-bX+\mathcal{V})\cdot\nabla\partial_R\Omega^{\text{cor}}_k&\\
=
-\partial_R\mathcal{V}_R\partial_R\Omega_k^{\text{cor}}
-\left(\partial_R\mathcal{V}_\theta-\frac{\mathcal{V}_\theta}{R}\right)\frac{\partial_\theta\Omega^{\text{cor}}_k}{R}
-\left(\partial_R\bar{V}_\theta-\frac{\bar{V}_\theta}{R}\right)\frac{\partial_\theta\Omega^{\text{cor}}_k}{R}
-\partial_R\mathcal{G}
&=:\mathcal{H}_3.
\end{split}
\end{equation}
where we have applied \eqref{eq:VX}
\begin{align*}
\partial_R(\bar{V}\cdot\nabla\Omega^{\text{cor}}_k)
&=\bar{V}\cdot\nabla\partial_R\Omega^{\text{cor}}_k
+\left(\partial_R\bar{V}_\theta-\frac{\bar{V}_\theta}{R}\right)\frac{\partial_\theta\Omega^{\text{cor}}_k}{R}
,\\
\partial_R(X\cdot\nabla\Omega^{\text{cor}}_k)
&=X\cdot\nabla\partial_R\Omega^{\text{cor}}_k
+\partial_R\Omega^{\text{cor}}_k,
\end{align*}
and also
\begin{align*}
\partial_R(
\mathcal{V}\cdot\nabla\Omega_k^{\text{cor}})
&=\mathcal{V}\cdot\nabla\partial_\theta\Omega_k^{\text{cor}}
+\partial_R\mathcal{V}_R\partial_R\Omega_k^{\text{cor}}
+\left(\partial_R\mathcal{V}_\theta-\frac{\mathcal{V}_\theta}{R}\right)\frac{\partial_\theta\Omega^{\text{cor}}_k}{R}.
\end{align*}
After multiplying \eqref{diffreq} by $R$ we get
\begin{equation}\label{diffreqrewritten}
\begin{split}
\partial_\tau (R\partial_R \Omega^{\text{cor}}_k) - a b R\partial_R\Omega^{\text{cor}}_k+(\bar{V}-bX+\mathcal{V})\cdot\nabla(R\partial_R\Omega^{\text{cor}}_k)&\\
=\mathcal{V}_R\partial_R \Omega^{\text{cor}}_k
+R\mathcal{H}_3
&=:\mathcal{H}_4,
\end{split}
\end{equation}
where we have applied that $\bar{V}_R=0$, $X=Re^{i\theta}$ and
$\nabla R=e_R$.
Multiplying \eqref{diffreqrewritten} by $R\partial_R\Omega^{\text{cor}}_k$ and integrating by parts yields
$$
\frac{1}{2}\partial_\tau\Vert R\partial_R\Omega^{\text{cor}}_k\Vert_{L^2}^2=b(a-1)\Vert \partial_R\Omega^{\text{cor}}_k\Vert_{L^2}^2+\int_{\R^2}\mathcal{H}_4 R\partial_R\Omega^{\text{cor}}_k\dif X
\leq
\Vert \mathcal{H}_4\Vert_{L^2} \Vert R\partial_R\Omega^{\text{cor}}_k\Vert_{L^2},
$$
and thus,
$$
\Vert R\partial_R\Omega^{\text{cor}}_k(\tau)\Vert_{L^2}
\leq\int_{-k}^{\tau}\|\mathcal{H}_4(s)\|_{L^2}\dif s.
$$
For \eqref{RadialDerivativeBound2}, by multiplying \eqref{diffreq} by $(\partial_R \Omega^{\text{cor}}_k)^3$ and integrating by parts we get
\begin{align*}
\frac{1}{4}\partial_\tau\|\partial_R\Omega^{\text{cor}}_k\|_{L^4}^4
&= b\left(a+\frac{1}{2}\right)\|\partial_R\Omega^{\text{cor}}_k\|_{L^4}^4
+\int_{\R^2}\mathcal{H}_3
(\partial_R\Omega^{\text{cor}}_k)^3\dif X\\
&\leq
 b\left(a+\frac{1}{2}\right)\|\partial_R\Omega^{\text{cor}}_k\|_{L^4}^4
+\|\mathcal{H}_3\|_{L^4}\|\partial_R\Omega^{\text{cor}}_k\|_{L^4}^3,
\end{align*}
and thus
$$
\Vert \partial_R\Omega^{\text{cor}}_k(\tau)\Vert_{L^4}
\leq\int_{-k}^{\tau}e^{b\left(a+\frac{1}{2}\right)(\tau-s)}\|\mathcal{H}_3(s)\|_{L^4}\dif s.
$$
Similarly to Lemma \ref{lemma:DthetaOmega}, we conclude the proof by applying Lemmas \ref{lemma:Claim:V}  and \ref{lemma:Claim:G}, but now coupled with Lemma \ref{lemma:DthetaOmega}.
This was the reason for estimating the angular derivative before.
\end{proof}


\appendix

\section{Proof of formula \eqref{eq:Kn}}\label{sec:residue}

We start by writing $(z=e^{i\vartheta})$
$$
K_n(\rho)
=\frac{1}{2\pi}\int_0^{2\pi}
\frac{\sin(\vartheta)\sin(n\vartheta)}{1+\rho^2-2\rho\cos(\vartheta)}\dif\vartheta
=\frac{1}{8\pi i \rho}\int_{\T}
f(z)
\dif z,
$$
where $f$ is the meromorphic function
$$
f(z)
=\frac{(z^2-1)(z^{2n}-1)}{z^{n+1}(z-\rho)(z-\rho^{-1})}.
$$
Hence, by the Residue Theorem we have
$$
K_n(\rho)
=\frac{1}{4\rho}
\sum_{z=0,\rho,\rho^{-1}}
\mathrm{Ind}(\T,z)
\mathrm{Res}(f,z).
$$
We have
$$
\mathrm{Ind}(\T,0)=1,
\quad\quad
\mathrm{Ind}(\T,\rho)=1_{(0,1)}(\rho).
$$
It is easy to compute
the residue for the simple poles
\begin{align*}
\mathrm{Res}(f,\rho)
&=\frac{(\rho^2-1)(\rho^{2n}-1)}{\rho^{n+1}(\rho-\rho^{-1})}
=\rho^n-\rho^{-n},\\
\mathrm{Res}(f,\rho^{-1})
&=\frac{(\rho^{-2}-1)(\rho^{-2n}-1)}{\rho^{-(n+1)}(\rho^{-1}-\rho)}
=\rho^{-n}-\rho^n.
\end{align*}
For the multiple pole $z=0$ we write the Maclaurin series of $f$
$$
f(z)
=\frac{(z^2-1)(z^{2n}-1)}{z^{n+1}}g(z)
=\left(z^{n+1}-z^{n-1}-\frac{1}{z^{n-1}}+\frac{1}{z^{n+1}}\right)g(z),
$$
where
$$
g(z)=\frac{1}{(z-\rho)(z-\rho^{-1})}
=\frac{1}{\rho-\rho^{-1}}
\left(
\frac{1}{z-\rho}-\frac{1}{z-\rho^{-1}}
\right)
=\frac{1}{\rho-\rho^{-1}}
\sum_{j=0}^{\infty}
(\rho^{j+1}-\rho^{-(j+1)})z^j.
$$
Hence, for any $n\geq 1$, we deduce that
$$
\mathrm{Res}(f,0)
=\rho^n+\rho^{-n}.
$$
By plugging everything together we get \eqref{eq:Kn}.

\subsection*{Acknowledgments}
The authors thank all participants of the reading seminar on the uniqueness or lack thereof for the Navier-Stokes and Euler equations, which took place in Madrid during the spring of 2023.
D.F.~and F.M.~acknowledge the hospitality and financial support of the Institute for Advanced Study in Princeton during various periods in 2021-2022, as well as the discussions held there with Camillo De Lellis and his group on topics related to Vishik's construction. 

A.C.~ acknowledges financial support from Grant PID2020-114703GB-I00 funded by MCIN /AEI/ 10.13039/501100011033, Grants RED2022-
134784-T and RED2018-102650-T funded by MCIN/ AEI/ 10.13039/501100011033 and by a 2023
Leonardo Grant for Researchers and Cultural Creators, BBVA Foundation. The BBVA Foundation
accepts no responsibility for the opinions, statements, and contents included in the project and/or
the results thereof, which are entirely the responsibility of the authors.
A.C.~and D.F.~ acknowledge financial support from the Severo Ochoa Programme
for Centres of Excellence Grant CEX2019-000904-S funded by MCIN/ AEI/ 10.13039/501100011033. D.F.~acknowledges the financial support of QUAMAP and the ERC Advanced Grant 834728. D.F., F.M.~and M.S.~acknowledge support from PI2021-124-195NB-C32. D.F.~and F.M.~acknowledge support from CM through the Line of Excellence for University Teaching Staff between CM and UAM.
F.M.~acknowledges support from the 
Max Planck Institute for Mathematics in the Sciences. 
M.S.~acknowledges support from PID2022-136589NB-I00 founded by MCIN/AEI/10.13039/501100011033/ and by FEDER ({\it Una manera de hacer Europa}) and from ``Conselleria d'Innovaci\'o, Universitats, Ci\`encia i Societat Digital'': project AICO/2021/223 and programme ``Subvenciones para la contratación de personal investigador en fase postdoctoral'' (APOSTD 2022),  Ref. CIAPOS/2021/28.

\bibliographystyle{abbrv}
\bibliography{Unstable_Euler_vortex}

\begin{flushleft}
	\bigskip
	\'Angel Castro\\
	\textsc{Instituto de Ciencias Matemáticas, CSIC-UAM-UC3M-UCM\\
		28049 Madrid, Spain}\\
	\textit{E-mail address:} angel\_castro@icmat.es 
\end{flushleft}

\begin{flushleft}
	\bigskip
	Daniel Faraco\\
	\textsc{Departamento de Matem\'aticas, Universidad Aut\'onoma de Madrid, Instituto de
Ciencias Matem\'aticas, CSIC-UAM-UC3M-UCM\\
		28049 Madrid, Spain}\\
	\textit{E-mail address:} 
daniel.faraco@uam.es 
\end{flushleft}

\begin{flushleft}
	\bigskip
	Francisco Mengual\\
	\textsc{Max Planck Institute for Mathematics in the Sciences\\
		04103 Leipzig, Germany}\\
	\textit{E-mail address:} fmengual@mis.mpg.de
\end{flushleft}

\begin{flushleft}
	\bigskip
	Marcos Solera\\
	\textsc{Departament d’An\`alisi Matem\`atica, Universitat de Val\`encia\\
		46100 Burjassot, Spain}\\
	\textit{E-mail address:} marcos.solera@uv.es
\end{flushleft}

\end{document}